\numberwithin{equation}{section}
\newcommand{\R}{\mathbb R}
\def\eqdef{\stackrel{\rm def}{=}}
\def\beq{\begin{equation}}
\def\eeq{\end{equation}}
\def\beqs{\begin{equation*}}
\def\eeqs{\end{equation*}}
\newtheorem{theorem}{Theorem}[section]
\newtheorem{lemma}[theorem]{Lemma}
\newtheorem{corollary}[theorem]{Corollary}
\newtheorem{assumption}[theorem]{Assumption}
\theoremstyle{definition}
\newtheorem{remark}[theorem]{Remark}
\newcommand{\tnum}{\rm(\roman*)}
\newcommand{\rnum}{\rm(\alph*)}
\definecolor{darkred}{rgb}{.70,.12,.20}
\definecolor{darkgreen}{rgb}{.20,.52,.14}
\newcommand{\esssup}{\mathop{\mathrm{ess\,sup}}}
\newcommand{\varep}{\varepsilon}
\newcommand{\eR}{\vec k}
\numberwithin{equation}{section}
\title{Studying a doubly nonlinear model of slightly compressible Forchheimer flows in rotating porous media}
\date{\today}
\subjclass[2020]{76S05, 76U60, 86A05, 35K20, 35K65}
\keywords{Forchheimer flows, porous media, compressible fluids, rotating fluids, doubly nonlinear equation, Poincar\'e--Sobolev inequality, Moser iteration, maximum estimates}
\begin{document}
\author{Emine Celik$^{1}$}
\address{$^{1}$Department of Mathematics, Sakarya University\\
54050, Sakarya, Turkey}
\email{eminecelik@sakarya.edu.tr}

\author{Luan Hoang$^{2}$}
\address{$^2$Department of Mathematics and Statistics,
Texas Tech University\\
1108 Memorial Circle, Lubbock, TX 79409--1042, U. S. A.}
\email{luan.hoang@ttu.edu}

\author{Thinh Kieu$^{3}$}
\address{$^{3}$Department of Mathematics, University of North Georgia, Gainesville Campus\\
3820 Mundy Mill Rd., Oakwood, GA 30566, U. S. A.}
\email{thinh.kieu@ung.edu}

\begin{abstract}
We study the generalized Forchheimer flows of slightly compressible fluids in rotating porous media. 
In the problem's model, the varying density in the Coriolis force is fully accounted for without any simplifications. 
It results in a doubly nonlinear parabolic equation for the density.
We derive \emph{a priori} estimates for the solutions in terms of the initial, boundary data and physical parameters, emphasizing on the case of unbounded data. 
Weighted Poincar\'e--Sobolev inequalities suitable to the equation's nonlinearity, adapted Moser's iteration and maximum principle are used and combined to obtain different types of estimates.
\end{abstract}

\maketitle
\tableofcontents

\pagestyle{myheadings}\markboth{E. Celik, L. Hoang,  and T. Kieu}
{Doubly Nonlinear Model of Forchheimer Flows in Rotating Porous Media}

\section{Introduction}\label{intro}
We continue the investigation of the Forchheimer flows of slightly compressible fluids in rotating porous media, which was initiated in our previous work \cite{CHK3}. In paper \cite{CHK3}, we simplified the Coriolis force's dependence on the density in the model  in order to reduce the complexity of the problem. The resulting partial differential equation (PDE) was of degenerate parabolic type and we were able to understand its key nonlinear structure, and derived various estimates for its solutions.
In this paper, we study the full model without any simplifications. As we will see, the PDE becomes a doubly nonlinear parabolic equation. We will analyze this more complicated  equation in more general context by realizing its new structure and utilizing other techniques with appropriate adaptations and improvements.

We consider a porous medium, with constant porosity $\tilde\phi\in(0,1)$ and constant permeability $k>0$, rotated with a constant angular velocity $\tilde\Omega \eR$,  where $\tilde\Omega\ge 0$ is the constant angular speed, and $\eR$ is a constant unit vector.
We study the dynamics of fluid flows in this porous medium.

The equation for the Darcy flows in rotating porous media written in a rotating frame is, see Vadasz \cite{VadaszBook},
\beq\label{DarcyRot}
\frac{\mu}{k}v+ \frac{2\rho \tilde\Omega}{\tilde\phi} \eR \times v + \rho \tilde\Omega^2 \eR \times (\eR \times x)=-\nabla p+\rho \vec g,
\eeq
where 
$\mu$ is the dynamic viscosity,
$v$ is the velocity, 
$\rho$ is the fluid density, 
$p$ is the pressure, 
$x$ is the position in the rotating frame,
$\vec g$ is the gravitational acceleration,
$\tilde\Omega^2 \eR \times (\eR \times x)$ is centripetal acceleration,
and $(2\rho \tilde\Omega/\tilde\phi) \eR \times v$ represents the Coriolis effects in the rotating porous medium.

For fluid flows that obey Forchheimer's two-term law, we have 
 \beq\label{Ftwo}
\frac{\mu}{k} v+ \frac{c_F \rho}{\sqrt{k}}|v|v + \frac{2\rho \tilde\Omega}{\tilde\phi} \eR \times v+\rho \tilde\Omega^2 \eR \times (\eR \times x)=-\nabla p +\rho \vec g,
 \eeq
where  $c_F>0$ is the Forchheimer constant \cite{Ward64}. Other equations for Forchheimer's three-term and power laws can be obtained similarly. 

Equations \eqref{DarcyRot} and \eqref{Ftwo} can be written in one general form, namely, the generalized Forchheimer  equation in rotating porous media
 \beq\label{FM}
 \sum_{i=0}^N a_i \rho^{\bar\alpha_i} |v|^{\bar\alpha_i} v+ \frac{2\rho \tilde\Omega}{\tilde\phi} \eR \times v+\rho \tilde\Omega^2 \eR \times (\eR \times x)=-\nabla p +\rho \vec g.
 \eeq
 
Regarding the first sum in equation \eqref{FM}, the dependence on the density is expressed by the term $\rho^{\bar\alpha_i}$ which is obtained by using Muskat's dimension analysis \cite{MuskatBook}.

For the Forchheimer equations and other related models of fluid flows in porous media that differ from the ubiquitous Darcy's law, 
the interested reader is referred to the books \cite{BearBook,NieldBook,StraughanBook}.
Regarding their mathematical analysis in the case without rotation, see \cite{StraughanBook,Zabensky2015a,ChadamQin,Payne1999a,Payne1999b,MTT2016,CKU2006,KR2017} for incompressible fluids,  see \cite{ABHI1,HI2,HIKS1,HKP1,HK2,CHK1,CHK2,CH1,CH2} for compressible fluids, and references therein. 
For more information about fluid flows in rotating porous media, see \cite{VadaszBook} and, also, our previous mathematical study \cite{CHK3}.

Hereafter, we fix the integer $N\ge 1$, the powers $\bar\alpha_0=0<\bar\alpha_1<\bar\alpha_2<\ldots<\bar\alpha_N$, and positive constant  coefficients $a_0,a_1,\ldots,a_N$.

Define a function $g:\mathbb{R}^+\rightarrow\mathbb{R}^+$  by
\beq\label{eq2}
g(s)=a_0 + a_1s^{\bar \alpha_1}+\cdots +a_Ns^{\bar \alpha_N}=\sum_{i=0}^N a_i s^{\bar \alpha_i}\quad\text{for } s\ge 0.
\eeq 
In \eqref{eq2} and throughout the paper, we conveniently use $0^0=1$.

Set $\mathcal R(\rho)=2\rho \tilde\Omega/\tilde\phi$.
Multiplying both sides of  \eqref{FM}  by $\rho$ gives
 \beq\label{neweq1}
 g(|\rho v|) \rho v + \mathcal R(\rho) \eR \times (\rho v)  =-\rho\nabla p+ \rho^2 \vec g- \rho^2 \tilde\Omega^2 \eR \times (\eR \times x).
 \eeq 

We solve for $\rho v$ from \eqref{neweq1} in terms of the vector on its right-hand side and the $\mathcal R(\rho)$. 
To do that, we  define the function $F_z:\R^3\to\R^3$, for any $z\in\R$, by
\beq\label{Fdef}
F_z(v)= g(|v|)v + z  \mathbf J v \quad \text{ for }v\in\R^3,
\eeq
where $\mathbf J$ is the $3\times 3$ matrix for which $\mathbf J x=\eR \times x$ for all $x\in\R^3$.

Equation \eqref{neweq1} is rewritten as
\beq\label{FRv}
F_{\mathcal R(\rho)}(\rho v)=-(\rho\nabla p - \rho^2 \vec g + \rho^2 \tilde\Omega^2 \mathbf J^2 x).
\eeq

Thanks to \cite[Lemma 1.1]{CHK3}, the function $F_z$ is odd and bijective for each $z\in\R$.
Then we can invert \eqref{FRv} to have
 \beq\label{new2}
\rho v= -F_{\mathcal R(\rho)}^{-1}(\rho\nabla p - \rho^2 \vec g + \rho^2 \tilde\Omega^2 \mathbf J^2 x).
\eeq 

In article \cite{CHK3}, $\mathcal R(\rho)$ was approximated by a constant $\mathcal R=2\rho_* \tilde\Omega/\tilde\phi$, for some constant density  $\rho_*$. This resulted in a simpler equation than \eqref{new2}. On contrary, we will keep the dependence of $\mathcal R(\rho)$ on $\rho$ in the current paper, and treat equation \eqref{new2} in that original form.

We recall that the fluid's compressibility for isothermal conditions is
\beqs
\varpi=-\frac{1}{V}\frac{dV}{dp}=\frac{1}{\rho}\frac{d\rho}{dp},
\eeqs
where $V$, here, denotes the fluid's volume. In many cases such as (isothermal) compressible liquids, $\varpi$ is assumed to be a constant \cite{MuskatBook,BearBook}. In particular, it is a small positive constant for (isothermal) slightly compressible fluids such as crude oil and water. This condition is commonly used in petroleum and reservoir engineering \cite{AhmedHandbook2nd,Dakebook}, where the fluid dynamics in porous media have important applications. The current paper is focused on (isothermal)  slightly compressible fluids, hence, we study the following equation of state 
   \beq\label{slight}
\frac{1}{\rho}   \frac{d\rho}{dp}=\varpi, 
\quad \text{where the constant compressibility $\varpi>0$ is small}.
   \eeq

The equation of continuity is
\beq\label{eq5}
\tilde \phi\frac{\partial \rho}{\partial t} +\nabla\cdot(\rho v)=0.
\eeq

Note, by \eqref{slight}, that $\rho\nabla p=\varpi^{-1}\nabla \rho$.
Then combining \eqref{eq5} with \eqref{new2},  we obtain 
\beq\label{eq0}
\tilde \phi\frac{\partial \rho}{\partial t}=\nabla\cdot(F_{\mathcal R(\rho)}^{-1}(\varpi^{-1} \nabla \rho - \rho^2 \vec g+ \rho^2 \tilde \Omega^2 \mathbf J^2 x)).
\eeq

The gravitational field in the rotating frame is 
$\vec g(t)=-\tilde{\mathcal G} \tilde e_0(t)$, where $\tilde{\mathcal G}>0$ is the gravitational constant, and $\tilde e_0\in C^\infty(\R,\R^3)$ with  $|\tilde e_0(t)|=1$ for all $t\in\R$.

We make a simple change of variable  $u=\rho/\varpi$, and corresponding scaling of parameters
\beqs 
\phi =\varpi\tilde \phi>0,\quad
\mathcal G=\varpi^2\tilde{\mathcal G},\quad \Omega=\varpi \tilde \Omega.
\eeqs

Note that
\beq\label{Rstar}
\mathcal R(\rho)=R_* u, \text{ where }R_*=2\varpi\tilde\Omega/\tilde\phi=2\varpi\Omega/\phi.
\eeq

Then we obtain from \eqref{eq0} that
\beq\label{ueq}
\phi \frac{\partial u}{\partial t}=  \nabla \cdot\left (X\left(u,\nabla u+u^2[-\mathcal G \tilde e_0(t)+ \Omega^2  \mathbf J^2 x]\right)\right),
\eeq
where
\beq\label{Xdef}
X(z,y)=F_{R_* z}^{-1}(y) \text{ for }z\in\R,\ y\in\R^3.
\eeq

By making another transformation $\tilde u(x,t)=u(x,\phi t)$ and rewriting equation \eqref{ueq} for $\tilde u(x,t)$ and then removing the tilde notation, we obtain
\beq\label{mainuX}
\frac{\partial u}{\partial t}=  \nabla \cdot\left (X\left(u,\nabla u+u^2 \mathcal Z(x,t)\right)\right),  
\eeq
where
\beq\label{Zx1}
\mathcal Z(x,t)=-\mathcal G e_0(t)+ \Omega^2  \mathbf J^2 x\text{ with } e_0(t)=\tilde e_0(\phi t).
\eeq

We will focus on the Dirichlet boundary condition for $u(x,t)$.
Let $U$ be an open, bounded set in $\R^3$ with $C^1$ boundary $\Gamma=\partial U$.
We study the initial boundary value problem (IBVP)
\beq\label{ibvpg}
\begin{aligned}
\begin{cases}
 \begin{displaystyle}\frac{\partial u}{\partial t}\end{displaystyle}=\nabla\cdot \left(X\left(u,\nabla u+u^2 \mathcal Z(x,t)\right)\right)\quad &\text{in}\quad U\times (0,\infty)\\
u(x,0)=u_0(x) \quad &\text{in}\quad U\\
u(x,t)= \psi(x,t)\quad &\text{in}\quad \Gamma\times (0,\infty),
\end{cases}
\end{aligned}
\eeq
where $u_0(x)$ and $\psi(x,t)$ are given. 

In previous article \cite{CHK3}, the maximum estimates for the solutions are achieved by the use of the maximum principle. This method requires the initial data to be bounded. In this paper, we aim at treating also unbounded initial data. For that, we will use the Moser iteration. 
Regarding the newly obtained PDE \eqref{mainuX}, it  has extra dependence on $u$, in addition to $\nabla u+u^2 \mathcal Z(x,t)$.
This dependence turns out to yield new weights, which depend on the solution $u$ itself, in the energy estimates. Therefore, more technical treatments are required.
Indeed, we establish suitable weighted Poincar\'e--Sobolev inequalities  to deal with these weights.
We are then able to estimate the Lebesgue norms of the solutions, and, by the Moser iteration, their essential supremum. 
These short-time estimates are combined with the maximum principle to give all time estimates.
Moreover, we highlight that our estimates are derived by appropriately handled techniques to provide explicit dependence on physical parameters including the angular speed of the rotation.

The paper is organized as follows. 
 In section~\ref{Prem},  we present crucial properties  of the function  $X(z,y)$ by recasting the corresponding results in \cite{CHK3} but with explicit dependence on $z$, see Lemmas \ref{lem21} and \ref{Xder}. We also establish some elliptic and parabolic Poincar\'e-Sobolev inequalities with certain weights. These particular inequalities are then formulated in suitable forms for our treatment of the double nonlinearity in \eqref{mainuX},
 see Lemma \ref{PS1}, Corollary \ref{PS2} and Lemma \ref{PS3}.
In section~\ref{L-p-est}, we study the IBVP \eqref{ubar0} for $\bar{u}(x,t)$, which, briefly speaking, is a nonnegative solution $u(x,t)$ of \eqref{ibvpg} shifted by the boundary data. We obtain the $L^\alpha$-estimates, for sufficient large $\alpha\in(0,\infty)$, for $\bar u$ in terms of the initial and boundary data, see Theorem~\ref{aprio1}. We also establish  in  Theorem~\ref{aprio1} a weighted  $L_{x,t}^{2-a}$-estimate for the gradient of $\bar{u}$, with the number $a\in(0,1)$ defined in \eqref{axi} and the weight function depending on the solution $u$. 
Section~\ref{maxsec} is focused on the $L^\infty$-estimates for $\bar{u}$. 
By adapting Moser's iteration, we derive,  in Theorem~\ref{maxest}, an upper bound for $\bar u$'s $L^\infty$-norm expressed in terms of its $L^\alpha$-norm for some finite number $\alpha>0$. 
The main estimate, for small time $t>0$, is then obtained in Theorem~\ref{thm45}  in terms of certain $L^\alpha$-norms of the initial and boundary data. 
All estimates' dependence on the physical parameters is expressed via the number $\chi_*$, see \eqref{chidef}. It is meticulously tracked in each step of the complicated iteration.
In section~\ref{maxprin}, we establish the maximum principle for classical solutions of \eqref{mainuX} in Theorem~\ref{maxpr}. 
Combining this maximum principle with the short-time estimates in section \ref{maxsec}, we obtain the maximum estimates in Theorem~\ref{maxestsol} for nonnegative solutions of the IBVP \eqref{ibvpg} for all time $t>0$ even when the initial data is unbounded. 

\section{Preliminaries}\label{Prem}

\subsection{Notation} 
A vector $x\in\R^n$ is denoted by a $n$-tuple $(x_1,x_2,\ldots,x_n)$ and considered as a column vector, i.e., a $n\times 1$ matrix. Hence $x^{\rm T}$ is the $1\times n$ matrix $(x_1\ x_2\ldots x_n)$.

For two vectors $x,y\in \R^n$, their dot product is $x\cdot y=x^{\rm T}y=y^{\rm T}x$, while $xy^{\rm T}$ is the $n\times n$ matrix $(x_iy_j)_{i,j=1,2,\ldots,n}$.

Let $\mathbf A=(a_{ij})$ and $\mathbf B=(b_{ij})$ be any $n\times n$ matrices of real numbers. Their inner product is
\beqs 
\mathbf A:\mathbf B\eqdef {\rm trace}\left(\mathbf A\mathbf B^{\rm T}\right)=\sum_{i,j=1}^n a_{ij}b_{ij}.
\eeqs

The Euclidean norm of the matrix $\mathbf A$ is $$|\mathbf A|=(\mathbf A:\mathbf A)^{1/2}=\left(\sum_{i,j=1}^n a_{ij}^2\right)^{1/2}.$$ 
(Note that we do not use $|\mathbf A|$ to denote the determinant in this paper.)

When $\mathbf A$ is considered as a linear operator, another norm is defined by
\beqs
\|\mathbf A\|_{\rm op}=\max\left\{ \frac{|\mathbf Ax|}{|x|}:x\in\R^n,x\ne 0\right\} = \max\{ |\mathbf Ax|:x\in\R^n,|x|=1\}.
\eeqs

It is well-known that
\beq\label{nnorms}
\|\mathbf A\|_{\rm op}\le |\mathbf A|\le c_*\|\mathbf A\|_{\rm op},
\eeq
where $c_*=c_*(n)$ is a positive constant independent of $\mathbf A$.

Clearly, the matrix $\mathbf J$ in \eqref{Fdef} satisfies 
\beq\label{Jineq}
|\mathbf Jx|\le|\vec{k}||x|=|x|  \text{ and } |\mathbf J^2x|\le|\mathbf Jx|\le |x| \text{ for all }x\in\mathbb{R}^3.
\eeq

For a function $f=(f_1,f_2,\ldots,f_m):\R^n\to\R^m$, its derivative is the $m\times n$ matrix
\beq\label{deriv}
Df=\Big(\frac{\partial f_i}{\partial x_j}\Big)_{1\le i\le m,1\le j\le n}.
\eeq

In particular, when  $m=1$, i.e.,  $f:\R^n\to \R$, the derivative is 
$$Df=\left (\frac{\partial f}{\partial x_1}\quad 
\frac{\partial f}{\partial x_2}\quad \ldots \quad 
\frac{\partial f}{\partial x_n} \right),$$ 
while its gradient vector is  
$\nabla f=(\frac{\partial f}{\partial x_1},\frac{\partial f}{\partial x_2},\ldots,\frac{\partial f}{\partial x_n} )=(Df)^{\rm T}.$

The Hessian matrix is $$D^2 f=D(\nabla f)=  \Big(\frac{\partial^2 f}{\partial x_j \partial x_i} \Big)_{i,j=1,2,\ldots,n}.$$

We also write $D_xf$ for $Df$ in \eqref{deriv} in case the variables need to be indicated explicitly.

\subsection{Auxiliary inequalities}
The following is a convenient consequence of Young's inequality. If $x_i\ge 0$ and $z_i>1$ for $i=1,2,\ldots,k$ with $k\ge 2$ such that $\sum_{i=1}^k 1/z_i=1$, then
\beq\label{eeY}
\prod_{i=1}^k x_i\le \sum_{i=1}^k x_i^{z_i}.
\eeq
For the sake of brevity, we call \eqref{eeY} Young's inequality in this paper.

For $x,y\ge 0$, one has 
\beq\label{ee4}
x^\beta \le x^\alpha+x^\gamma \text{ for } 0\le \alpha\le \beta\le\gamma,
\eeq
\beq\label{ee3}
 (x+y)^p\le 2^{(p-1)^+}(x^p+y^p)  \text{ for }  p>0,
\eeq
where $z^+=\max\{z,0\}$ for any $z\in \R$.
We also frequently use the following  alternative form  of \eqref{ee3}
\beq\label{ee2}
(x+y)^p\le 2^p(x^p+y^p)  \text{ for all } x,y\ge 0,\ p>0.
\eeq

By the triangle inequality and  inequality \eqref{ee3}, we have
\beq\label{ee6}
|x\pm y|^p\ge 2^{-(p-1)^+}|x|^p-|y|^p \quad \text{for all } x,y\in\R^n,\quad p>0.
\eeq

The interpolation inequality for the Lebesgue integrals: if  $0<p<s<q$ and $1/s=\theta/p+(1-\theta)/q$ for $\theta\in(0,1)$, then  
\beqs
\left(\int |f|^sd\mu\right)^\frac{1}{s}
\le \left(\int |f|^p d\mu\right)^\frac{\theta}{p}
\left(\int |f|^q d\mu\right)^\frac{1-\theta}{q}.
\eeqs

\subsection{Characteristics of the function $X(z,y)$}

Note that $v=\widetilde X(z,y)\eqdef F_z^{-1}(y)$ is the unique solution of the equation 
 $$G(z,y,v)\eqdef F_z(v)-y=g(|v|)v + z  \mathbf J v -y=0 \text{ for $z\in\R,y,v\in\R^3$.}$$

The partial derivatives of $G$ are  
\begin{align*}
D_vG(z,y,v)&=DF_z(v)=g'(|v|)\frac{v v^{\rm T}}{|v|}+g(|v|)\mathbf I_3+ z\mathbf J\text{ for }v\ne 0,\\
D_vG(z,y,0)&=DF_z(0)=g(0)\mathbf I_3+z \mathbf J,\\
D_zG(z,y,v)&=\mathbf Jv, \quad D_yG(z,y,v)=-\mathbf I_3.
\end{align*}

One can verify that $G\in C^1(\R^7)$. Same as in \cite[Lemma 2.3]{CHK3}, $D_vG$ is invertible on $\R^7$. By the Implicit Function Theorem, the solution $v=\widetilde X(z,y)$ belongs to $C^1(\R^4)$.
Consequently, the function $X(z,y)=\widetilde X(R_*z,y)$ belongs to $C^1(\R^4)$.

Throughout the paper, we denote 
\beq\label{axi}
a=\frac{\bar\alpha_N}{1+\bar\alpha_N}\in(0,1),\quad 
\chi_0=g(1)=\sum_{i=0}^N a_i.
\eeq 

The properties of the function $X(z,y)$, which is defined by \eqref{Xdef}, are similar to those established in \cite[Lemmas 2.1 and 2.4]{CHK3}.
Now that $X$ depends on $z$, we need some explicit dependence on $z$ for the inequalities there.
In fact, thanks to \eqref{Rstar}, we can replace $\chi_1=\chi_0+\mathcal R$ in \cite[Lemmas 2.1 and 2.4]{CHK3} with $\chi_0+R_*z$, and, hence, rewrite those two lemmas as Lemmas \ref{lem21} and \ref{Xder} below. Denote $\R_+=[0,\infty)$.

\begin{lemma}\label{lem21}

{\rm (i)} One has 
\beq\label{X0}
\frac{c_1(\chi_0+R_*z)^{-1}|y|}{(1+|y|)^a}\le |X(z,y)|\le \frac{c_2(\chi_0+R_*z)^a|y|}{(1+|y|)^a} \text{ for all }z\in\R_+,y\in \mathbb R^3,
\eeq
where 
$c_1=\min\{1,\chi_0\}^a $ and $c_2=2^a c_1^{-1}\min\{a_0,a_N\}^{-1}$.
Alternatively,
\beq\label{X1}
 (\chi_0+R_*z)^{-(1-a)}|y|^{1-a}-1\le |X(z,y)|\le c_3|y|^{1-a} \text{ for all }z\in\R_+,y\in \mathbb R^3,
 \eeq
where $c_3=(a_N)^{a-1}$.

{\rm (ii)} One has
\beq\label{X2}
\frac{c_4 (\chi_0+R_*z)^{-2} |y|^2}{(1+|y|)^a}  \le X(z,y)\cdot y\le \frac{c_2 (\chi_0+R_*z)^a |y|^2}{(1+|y|)^a} \text{ for all }z\in\R_+,y\in \mathbb R^3,
\eeq
where $c_4=(\min\{1,a_0,a_N\}/2^{\alpha_N})^{1+a}$.
Alternatively,
\beq\label{X3}
c_5(\chi_0+R_*z)^{-2} (|y|^{2-a}-1)  \le X(z,y)\cdot y\le c_3|y|^{2-a}\text{ for all }z\in\R_+,y\in \mathbb R^3,
\eeq
where $c_5=2^{-a}c_4$.
\end{lemma}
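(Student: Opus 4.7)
The plan is to set $v = X(z,y)$, so $v$ satisfies $F_{R_*z}(v) = y$, that is,
\[ y = g(|v|)\,v + R_*z\,\mathbf J v. \]
The key observation is that $\mathbf J v = \vec k \times v$ is orthogonal to $v$, so the two right-hand terms are perpendicular. Taking the dot product with $v$ and the squared Euclidean norm, respectively, yields the two fundamental identities
\[ y\cdot v = g(|v|)|v|^2, \qquad |y|^2 = g(|v|)^2|v|^2 + (R_*z)^2|\mathbf J v|^2. \]
Combined with $|\mathbf J v|\le|v|$ from \eqref{Jineq}, these drive every estimate in the lemma. The argument otherwise parallels \cite[Lemmas 2.1 and 2.4]{CHK3}; the only new feature is that the dependence on $R_*z$ is retained throughout in place of the constant $\mathcal R$ used there.

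For the upper bound on $|v|$ in (i), the first identity gives $|y|\ge g(|v|)|v|\ge a_0|v|+a_N|v|^{1+\bar\alpha_N}$. A case split at $|v|=1$ yields $|v|\le a_0^{-1}|y|$ when $|v|\le 1$ and $|v|\le(|y|/a_N)^{1-a}$ when $|v|>1$, and these unify into $|v|\le c_2(\chi_0+R_*z)^a|y|/(1+|y|)^a$ because $|y|/(1+|y|)^a$ interpolates between $|y|$ and $|y|^{1-a}$; the factor $(\chi_0+R_*z)^a$ is a convenient uniform upper bound, not sharp. For the lower bound on $|v|$, the second identity and $|\mathbf J v|\le |v|$ give $|y|\le(g(|v|)+R_*z)|v|$, and the sharp bound $g(|v|)\le \chi_0\max\{1,|v|^{\bar\alpha_N}\}$ then produces $|y|\le (\chi_0+R_*z)\max\{1,|v|^{\bar\alpha_N}\}\,|v|$. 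Writing $A=\chi_0+R_*z$, $Y=|y|/A$ and $w=|v|$, this reads $Y\le w\max\{1,w^{\bar\alpha_N}\}$; a case split at $w=1$ gives $w\ge Y$ when $w\le 1$ (which forces $Y\le 1$) and $w\ge Y^{1-a}$ when $w>1$, so $w\ge\min\{Y,Y^{1-a}\}\ge Y/(1+Y)^a$. A short computation comparing the ratio $A(1+|y|)/(A+|y|)$ with $\min\{1,A\}$ (and using $A=\chi_0+R_*z\ge \chi_0$) then converts this into $|v|\ge c_1(\chi_0+R_*z)^{-1}|y|/(1+|y|)^a$ with $c_1=\min\{1,\chi_0\}^a$.

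For part (ii), the identity $X(z,y)\cdot y=g(|v|)|v|^2$ reduces \eqref{X2} to bounds on $g(|v|)|v|^2$: the upper bound uses $g(|v|)\le \chi_0(1+|v|)^{\bar\alpha_N}$ together with the upper bound on $|v|$ from (i), while the lower bound uses $g(|v|)\ge a_0+a_N|v|^{\bar\alpha_N}$ together with the lower bound on $|v|$ from (i); routine simplification then produces both sides with the common interpolating factor $|y|^2/(1+|y|)^a$. The alternative forms \eqref{X1} and \eqref{X3} follow at once: the upper estimates use $|y|/(1+|y|)^a\le|y|^{1-a}$ and $|y|^2/(1+|y|)^a\le |y|^{2-a}$, while the lower estimates use the elementary inequality $\min\{Y,Y^{1-a}\}\ge Y^{1-a}-1$ valid for all $Y\ge 0$, applied with $Y=|y|/(\chi_0+R_*z)$. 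The main obstacle is the lower bound in (i): tracking the constant $c_1=\min\{1,\chi_0\}^a$ through the case analysis at $|v|=1$ requires some care, because $\chi_0\ge 1$ is not assumed and the ratio $A(1+|y|)/(A+|y|)$ degenerates as $A\to 0^+$. Every other estimate in the lemma then falls out by direct substitution from the two fundamental identities.
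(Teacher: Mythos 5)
Your proof is correct, and the two orthogonality identities $y\cdot v=g(|v|)|v|^2$ and $|y|^2=g(|v|)^2|v|^2+(R_*z)^2|\mathbf Jv|^2$ (which follow from $\mathbf J v\perp v$) are indeed the crux of the whole lemma; the paper presents no fresh argument here but simply invokes \cite[Lemmas~2.1 and~2.4]{CHK3}, noting the constant $\chi_1=\chi_0+\mathcal R$ there may be replaced throughout by $\chi_0+R_*z$, and your computation is precisely what lies behind that cited result. One small slip in your last paragraph: the upper bounds in \eqref{X1} and \eqref{X3}, whose constant $c_3=a_N^{a-1}$ carries no $(\chi_0+R_*z)^a$ factor, do \emph{not} follow from \eqref{X0} and \eqref{X2} via $|y|/(1+|y|)^a\le|y|^{1-a}$; they must be read off directly from $|y|\ge a_N|v|^{1+\bar\alpha_N}$, a point already implicit in your earlier remark that the $(\chi_0+R_*z)^a$ in \eqref{X0} is non-sharp padding (and, incidentally, $A=\chi_0+R_*z\ge\chi_0>0$ never degenerates, so the ratio $A(1+|y|)/(A+|y|)\ge\min\{1,\chi_0\}$ is immediate).
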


Although inequalities \eqref{X0} and \eqref{X2} provide more precise dependence on $|y|$ than \eqref{X1} and \eqref{X3}, the latter two are sufficient and more convenient in this paper.

\begin{lemma}\label{Xder}
For all $z\in\R_+$ and $y\in \mathbb R^3$, 
the matrix $D_yX$ of partial derivatives in the variable $y$ satisfies
\beq\label{Xprime}
c_6(\chi_0+R_*z)^{-1}(1+|y|)^{-a}\le |D_y X(z,y)|\le  c_7(1+\chi_0+R_*z)^a (1+|y|)^{-a} ,
\eeq
\beq\label{hXh} 
\xi^{\rm T} D_y X(z,y)\xi \ge c_8 (\chi_0+R_*z)^{-2}(1+|y|)^{-a} |\xi|^2 \text{ for all }\xi\in\R^3,
\eeq
where 
\beqs
c_6= \sqrt 3(2^{-\alpha_N}\min\{1,a_N\})^a/(\alpha_N+2),\
c_7=c_* 2^{\alpha_N}/\min\{a_0,a_N\},\
c_8=c_4/(\alpha_N+2)^2
\eeqs
with $c_*=c_*(3)$ given in \eqref{nnorms}.
\end{lemma}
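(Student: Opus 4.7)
The plan is to apply the inverse function theorem. Since $v = X(z,y)$ solves $F_{R_*z}(v) = y$ and $F_z \in C^1$ with $D_v F_z$ invertible everywhere (as used just before the statement of Lemma~\ref{lem21}), we get
\beqs
D_y X(z,y) = M^{-1}, \qquad M \eqdef D_v F_{R_*z}(v)\big|_{v=X(z,y)}.
\eeqs
Using the formulas for $D_v F_z$ displayed in the excerpt, decompose $M = S + R_*z\,\mathbf{J}$, where the symmetric part is
\beqs
S = g'(|v|)\frac{vv^{\rm T}}{|v|} + g(|v|)\mathbf{I}_3 \quad (v\neq 0)
\eeqs
and $R_*z\,\mathbf{J}$ is antisymmetric. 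The polynomial $g$ having positive coefficients yields $g'(|v|)vv^{\rm T}/|v| \succeq 0$, so
\beqs
g(|v|)|\eta|^2 \le \eta^{\rm T} S \eta \le \bigl(g(|v|)+|v|g'(|v|)\bigr)|\eta|^2 \quad\text{for all }\eta\in\R^3,
\eeqs
with the upper bound easily controlled by $(\bar\alpha_N+1)\cdot g_{\max}(|v|)$ type quantities; and $\|S\|_{\rm op}$ is controlled by $\chi_0(1+|v|^{\bar\alpha_N})$ up to constants depending only on the $a_i$ and $\bar\alpha_N$.

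For the upper bound in \eqref{Xprime}, the antisymmetric part disappears from the quadratic form: given $\eta$ with $M\eta = \xi$, Cauchy--Schwarz gives $|\xi||\eta|\ge \xi\cdot\eta = \eta^{\rm T} S\eta \ge g(|v|)|\eta|^2$, whence $\|M^{-1}\|_{\rm op}\le 1/g(|v|)$. Combining with \eqref{nnorms} and using Lemma~\ref{lem21}\tnum~(i) (more precisely, the lower inequality in \eqref{X1}) to extract a lower bound on $|v|$ (and hence on $g(|v|) \ge a_0 + a_N|v|^{\bar\alpha_N}$) in terms of $|y|$ and $(\chi_0+R_*z)$ should produce the factor $(1+|y|)^{-a}(1+\chi_0+R_*z)^a$. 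For the lower bound in \eqref{Xprime}, take a unit vector $\xi$ maximizing $|M^{-1}\xi|$: then $|D_yX|\ge \|M^{-1}\|_{\rm op}\ge 1/\|M\|_{\rm op}$, and I bound $\|M\|_{\rm op}\le \|S\|_{\rm op} + R_*z$ and insert the upper bound on $|v|$ from \eqref{X0}--\eqref{X1} of Lemma~\ref{lem21} to recover the claimed $c_6(\chi_0+R_*z)^{-1}(1+|y|)^{-a}$.

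For \eqref{hXh}, setting $\eta = M^{-1}\xi$ I compute
\beqs
\xi^{\rm T} D_y X(z,y)\,\xi = \xi^{\rm T}\eta = (M\eta)^{\rm T}\eta = \eta^{\rm T} S\eta \ge g(|v|)|\eta|^2,
\eeqs
since the antisymmetric part contributes zero. Then $|\eta|\ge |\xi|/\|M\|_{\rm op}$ yields
\beqs
\xi^{\rm T} D_y X(z,y)\,\xi \ge \frac{g(|v|)}{\|M\|_{\rm op}^2}\,|\xi|^2,
\eeqs
and I combine the bound $\|M\|_{\rm op}\lesssim \chi_0(1+|v|^{\bar\alpha_N}) + R_*z$ with Lemma~\ref{lem21}\tnum~(ii) to replace $|v|$ by $|y|$, producing the factor $(\chi_0+R_*z)^{-2}(1+|y|)^{-a}$ and the constant $c_8 = c_4/(\bar\alpha_N+2)^2$.

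The main obstacle is the bookkeeping: converting bounds expressed in $|v| = |X(z,y)|$ into the claimed bounds in $|y|$ with the \emph{exact} exponents $a$, $1$, $-1$, $-2$ on $(1+|y|)$ and $(\chi_0+R_*z)$ and the explicit constants listed in the statement. The key identity driving the exponents is $a/(1-a)=\bar\alpha_N$, which aligns the growth $g(|v|)\sim |v|^{\bar\alpha_N}$ with the decay $(1+|y|)^{-a}$ once \eqref{X1} is used. The case $v=0$ (i.e., $y=0$) is handled by continuity, as the formulas for $D_vG$ at $v=0$ given in the excerpt show $M = g(0)\mathbf{I}_3+R_*z\,\mathbf{J}$ satisfies the same estimates trivially.
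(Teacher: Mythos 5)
Your approach is essentially the one the paper relies on: the paper gives no proof of Lemma~\ref{Xder}, instead citing \cite[Lemma~2.4]{CHK3} and explaining that $\chi_1=\chi_0+\mathcal R$ there is replaced by $\chi_0+R_*z$. Your reconstruction — $D_yX=M^{-1}$ with $M=D_vF_{R_*z}$ evaluated at $v=X(z,y)$, the symmetric/antisymmetric split $M=S+R_*z\,\mathbf J$, the Cauchy--Schwarz argument $\eta^{\rm T}S\eta\ge g(|v|)|\eta|^2$, and conversion from $|v|$ to $|y|$ via Lemma~\ref{lem21} — is the standard route and is what CHK3 does.

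One point in your ``bookkeeping'' deserves to be made explicit, because it is where a naive estimate fails. In proving \eqref{hXh} you reach $\xi^{\rm T}D_yX\,\xi\ge g(|v|)|\xi|^2/\|M\|_{\rm op}^2$ with $\|M\|_{\rm op}\le(\bar\alpha_N+1)g(|v|)+R_*z\le(\bar\alpha_N+2)(g(|v|)+R_*z)$. If you now only use $g(|v|)\ge a_0$ in the numerator and $g(|v|)+R_*z\lesssim(\chi_0+R_*z)(1+|v|^{\bar\alpha_N})\lesssim(\chi_0+R_*z)(1+|y|)^a$ in the denominator, you obtain $(1+|y|)^{-2a}$, not the claimed $(1+|y|)^{-a}$. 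The correct bookkeeping is to also use $g(|v|)\ge\min\{a_0,a_N\}(1+|v|^{\bar\alpha_N})$ in the numerator, so that one full factor of $(1+|v|^{\bar\alpha_N})$ cancels against one of the two in $(g(|v|)+R_*z)^2$, leaving exactly $(\chi_0+R_*z)^{-2}(1+|y|)^{-a}$ and a constant of the form $c/(\bar\alpha_N+2)^2$ as stated. (Also a minor slip: converting $|v|=|X(z,y)|$ to $|y|$ should invoke Lemma~\ref{lem21}\tnum~(i), i.e.\ \eqref{X1}, rather than \tnum~(ii).)
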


To complement the estimate of $D_yX$ in \eqref{Xprime}, we derive in \eqref{DX2} and \eqref{DX3} below some estimates for $D_zX$.
Taking the partial derivative in $z$ of the equation $G(R_*z,y,X(z,y))=0$ we have
\beqs
0=R_* \mathbf JX(z,y)+DF_{R_*z}(X(z,y))D_zX(z,y)=R_* \mathbf JX(z,y)+ (D_y X(z,y))^{-1}D_zX(z,y),
\eeqs
which implies
\beq\label{DzX}
D_zX(z,y)=- R_* D_yX(z,y) \mathbf JX(z,y).
\eeq

Combining formula \eqref{DzX} with estimates \eqref{Xprime} and \eqref{X0}, respectively, \eqref{X1}, yields
\beq\label{DX2}
|D_z X(z,y)|\le  c_2c_7R_*(1+\chi_0+R_*z)^{2a} |y|(1+|y|)^{-2a},
\eeq
respectively,
\beq\label{DX3}
|D_z X(z,y)|\le  c_3c_7R_*(1+\chi_0+R_*z)^{a} |y|^{1-a}(1+|y|)^{-a}.
\eeq

\subsection{Weighted Poincar\'e--Sobolev inequalities}

In this subsection, we consider the space $\R^n$, with $n\ge 2$, and an open, bounded set $U\subset \R^n$.
For a number $p\in[1,n)$, its Sobolev conjugate is $p^*=np/(n-p)$.
We establish some specific inequalities of Poincar\'e--Sobolev type with weight functions.

\begin{lemma}[Elliptic version]\label{PS1}
 Suppose $p$ and $r_*$ are  positive numbers that satisfy
 \beq\label{powcond} 
\frac{n}{n+p}<r_*<1\text{ and } 
\frac 1 p\le r_*<\frac n p.
 \eeq 

Let $r$, $s$ and  $\alpha$ be numbers such that
 \beq\label{alz} 
r>0,\ 
\alpha>0,\ 
\alpha\ge s\ge 0,
 \text{ and } \alpha> \frac{nr_*(r-p+s)}{r_*(n+p)-n}.
 \eeq 

Denote
\beq\label{mdef}
m=\frac{\alpha-s+p}{p}.
\eeq 

Let $u(x)$ be a function that vanishes on  $\partial U$ with $|u|^m\in W^{1,{r_*p}}(U)$, and $W(x)$ be a positive function on $U$.
Then one has, for any $\varep>0$, that
 \begin{equation}\label{ay2}
\begin{aligned}
 \int_U |u|^{\alpha+r} dx
  &\le \varep \int_U |u|^{\alpha-s}|\nabla u|^p W dx +\varep^{-\frac\theta{1-\theta}}(\bar c m)^\frac{\theta p}{1-\theta} \|u\|_{L^\alpha}^{\alpha+\mu} \|W^{-1}\|_{L^\frac{r_*}{1-r_*}}^\frac{\theta}{1-\theta},
\end{aligned}
 \end{equation}
where 
\beq \label{mt}
\theta=\frac{rn r_*}{nr_*(p-s)+\alpha(r_*(n+p)-n)}\in(0,1), \quad 
\mu=\frac{r+\theta(s-p)}{1-\theta}>-\alpha,
\eeq 
and positive constant $\bar c$, which appears in \eqref{Sobi} below, depends on $U$ and $r_*p$, but not on $u,W,r, \alpha, s$. 
\end{lemma}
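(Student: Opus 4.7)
The strategy is a standard Moser-style chain: apply the Sobolev embedding to the power $|u|^m$, use H\"older to introduce the weight $W$, interpolate with $L^\alpha$, and finish with Young's inequality. Since $m=(\alpha-s+p)/p\ge 1$ (as $\alpha\ge s$ and $p>0$) and $u$ vanishes on $\partial U$, the function $|u|^m$ belongs to $W_0^{1,r_*p}(U)$. The condition $r_*p<n$ from \eqref{powcond} makes available the Sobolev inequality $\| |u|^m \|_{L^q}\le \bar c \,\|\nabla |u|^m\|_{L^{r_*p}}$, where $q=(r_*p)^*=nr_*p/(n-r_*p)$. Using $|\nabla |u|^m|=m|u|^{m-1}|\nabla u|$ a.e.\ and the identity $(m-1)p=\alpha-s$, this gives
\beqs
\Big(\int_U |u|^{mq}\,dx\Big)^{1/q}\le \bar c\,m \Big(\int_U |u|^{r_*(\alpha-s)}|\nabla u|^{r_*p}\,dx\Big)^{1/(r_*p)}.
\eeqs

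Next, to insert the weight, I would factor the integrand on the right as $\big(|u|^{\alpha-s}|\nabla u|^p W\big)^{r_*}\cdot W^{-r_*}$ and apply H\"older with conjugate exponents $1/r_*$ and $1/(1-r_*)$, justified by $r_*<1$. Writing $I=\int_U |u|^{\alpha-s}|\nabla u|^p W\,dx$ and $J=\|W^{-1}\|_{L^{r_*/(1-r_*)}}$, the result is $\int_U |u|^{mq}\,dx\le (\bar c m)^q I^{q/p}J^{q/p}$.

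The hypothesis $\alpha>nr_*(r-p+s)/(r_*(n+p)-n)$ in \eqref{alz}, together with $r_*(n+p)>n$, is easily rearranged into the equivalent condition $\alpha+r<mq$. This allows H\"older interpolation of $L^{\alpha+r}$ between $L^\alpha$ and $L^{mq}$: writing $\alpha+r=mq\,t+\alpha(1-t)$ with $t=r/(mq-\alpha)\in(0,1)$, I obtain
\beqs
\int_U |u|^{\alpha+r}\,dx \le A^{1-t}\Big(\int_U |u|^{mq}\,dx\Big)^t \le (\bar c m)^{qt}\,I^\theta\, J^\theta\, A^{1-t},
\eeqs
where $A=\|u\|_{L^\alpha}^\alpha$ and $\theta:=qt/p$. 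A short calculation, substituting $q/p=nr_*/(n-r_*p)$ and $mq-\alpha=[\alpha(r_*(n+p)-n)+nr_*(p-s)]/(n-r_*p)$, confirms that $\theta$ coincides with the formula in \eqref{mt}, and $\theta\in(0,1)$ follows from the same bound on $\alpha$.

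Finally, applying Young's inequality with exponents $1/\theta$ and $1/(1-\theta)$ to the product $I^\theta \cdot [(\bar c m)^{p\theta}J^\theta A^{1-t}]$ produces the required splitting $\varep I+C(\varep)\cdot\big[\,\cdots\,\big]^{1/(1-\theta)}$, yielding \eqref{ay2} once one verifies $\alpha(1-t)/(1-\theta)=\alpha+\mu$; this reduces to the identity $\alpha(\theta-t)=r+\theta(s-p)$, which falls out of the formulas for $\theta$ and $t$ by direct substitution. The bound $\mu>-\alpha$ is then equivalent to $t<1$, i.e.\ $\alpha+r<mq$. The main obstacle is not conceptual but the bookkeeping: one must verify that each step's hypothesis ($r_*p<n$ for Sobolev, $r_*<1$ for H\"older, $t<1$ for interpolation, $\theta<1$ for Young) is enforced by the two-part assumption block \eqref{powcond}--\eqref{alz}, and that the powers of $\bar cm$, $A$, and $J$ emerging from the chain assemble into the stated exponents $\theta$ and $\mu$.
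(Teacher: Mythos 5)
Your proposal is correct and follows essentially the same chain as the paper: the Poincar\'e--Sobolev inequality applied to $|u|^m$, H\"older to insert the weight $W$, interpolation between $L^\alpha$ and $L^{mq}$, then Young's inequality with the $\varepsilon$-split. One small imprecision: the hypothesis $\alpha>nr_*(r-p+s)/(r_*(n+p)-n)$ is not \emph{equivalent} to $\alpha+r<mq$ (i.e.\ $t<1$) as you state, but rather equivalent to $\theta<1$, which is strictly stronger since $\theta=qt/p$ with $q>p$; the implication still runs the right way, so the argument stands.
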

\begin{proof}
We can use the calculations in the proof of Lemma 2.1(ii)  up to inequality (2.17) in \cite{CHK1}  applied to
 \beqs 
 \bar{p}:=r_* p, \quad \bar{\alpha}:=r_*\alpha,\text{ and } \bar{s}:=r_* s.
 \eeqs
 Then other numbers $m$ in \cite[(2.8)]{CHK1} and $q$ in \cite[(2.16)]{CHK1} become
 \beq\label{mqb}
 \bar{m}=\frac{\bar\alpha-\bar{s}+\bar{p}}{\bar{p}}=m\text{ and } 
 \bar{q}=\bar p^* \bar m=(r_* p)^* m=\frac{nr_*(\alpha-s+p)}{n-r_*p}.
 \eeq

Thanks to the last condition in \eqref{powcond} and the first condition in \eqref{alz}, one has 
$1\le \bar p<n$, $\bar\alpha\ge\bar s$ and $\bar m=m\ge 1$.
Because $\bar{m}\ge 1$ and $u$ vanishes on $\partial U$, we have the following Poincar\'e--Sobolev inequality for $|u|^{\bar{m}}$, which corresponds to inequality  \cite[(2.14)]{CHK1},
\beq\label{Sobi}
  \||u|^{\bar{m}}\|_{L^{\bar{p}^*}}\le \bar c \| \nabla(|u|^{\bar{m}})\|_{L^{\bar{p}}},
\eeq
where $\bar c>0$ depends on $U$ and $\bar p$.
Elementary calculations, see inequality \cite[(2.17)]{CHK1}, yield from \eqref{Sobi} that
 \begin{align*}
  \|u\|_{L^{\bar{q}}}&\le (\bar c\bar{m})^{1/\bar{m}} \left(\int_U |u|^{\bar{\alpha}-\bar{s}}|\nabla u|^{\bar{p}}dx\right)^{1/(\bar{\alpha}-\bar{s}+\bar{p})}\\
  &=(\bar c m)^\frac{1}{m} \left(\int_U \Big[|u|^{\alpha-s}|\nabla u|^p W(x)\Big]^{r_*} \cdot W(x)^{-r_*} dx\right)^\frac{1}{r_*(\alpha-s+p)}.
 \end{align*}

Denote $I=\int_U |u|^{\alpha-s}|\nabla u|^p W dx$ and note that $\alpha-s+p=mp$.
Applying H\"older's inequality with powers $1/r_*$ and $1/(1-r_*)$ to the last integral gives 
 \begin{equation}\label{229a}
  \|u\|_{L^{\bar{q}}}\le (\bar c m)^\frac{1}{m} I^\frac{1}{mp}\|W^{-1}\|_{L^\frac{r_*}{1-r_*}}^\frac1{mp}.
 \end{equation}

Thanks to the fact $r_*>n/(n+p)$, we have $r_*(n+p)-n>0$ and $nr_*>n-r_*p$, which, together with the last assumption in \eqref{alz},  yield
$$\alpha>\frac{nr_*r+nr_*(s-p)}{r_*(n+p)-n} > \frac{(n-r_*p)r+nr_*(s-p)}{r_*(n+p)-n}.$$
This implies $\alpha+r<\bar q$.

Because $\alpha<\alpha+r<\bar{q}$, we can find a number $\theta_0\in(0,1)$ such that
$$\frac 1{\alpha+r}=\frac{\theta_0}{\bar q}+\frac{1-\theta_0}{\alpha}.$$

In fact, $\theta_0$ is explicitly given by
\beq\label{deft0}
 \theta_0=\frac{r\bar{q}}{(\alpha+r)(\bar{q}-\alpha)}.
\eeq

Applying interpolation inequality  and combining it with \eqref{229a}, we have
\beq\label{u1}
 \int_U |u|^{\alpha+r} dx
 \le \left(\| u\|_{L^{\bar{q}}}^{\theta_0}\|u\|_{L^{\alpha}}^{1-\theta_0}\right)^{\alpha+r}
 \le (\bar c m)^\frac{\theta_0 (\alpha+r)}{m} I^\frac{\theta_0 (\alpha+r)}{mp} \|W^{-1}\|_{L^\frac{r_*}{1-r_*}}^\frac{\theta_0 (\alpha+r)}{mp} \|u\|_{L^{\alpha}}^{(1-\theta_0)(\alpha+r)}.
\eeq 

Denoting 
\beq \label{traw}
\theta=\frac{\theta_0 (\alpha+r)}{mp},
\eeq 
we rewrite \eqref{u1} as
\beq\label{ur}
 \int_U |u|^{\alpha+r} dx
\le \left(\varep^\theta I^\theta\right) \cdot \left(\varep^{-\theta} (\bar c m)^{\theta p} \|W^{-1}\|_{L^\frac{r_*}{1-r_*}}^\theta \|u\|_{L^{\alpha}}^{(1-\theta_0)(\alpha+r)}\right). 
\eeq 

Using the values of $m$, $\bar q$ and $\theta_0$ in \eqref{mdef}, \eqref{mqb} and \eqref{deft0}, respectively, we calculate the number $\theta$ in \eqref{traw} and find that it is the same as in \eqref{mt}.

By, again, the last assumption in \eqref{alz}, we have $\theta\in(0,1)$.
 Then applying  Young's inequality \eqref{eeY} with powers $1/\theta$ and $1/(1-\theta)$ to the product on the right-hand side of inequality \eqref{ur} gives
\beq\label{ay1}
 \int_U |u|^{\alpha+r} dx
\le \varep I  +\varep^{-\frac\theta{1-\theta}}(\bar c m)^\frac{\theta p}{1-\theta} \|W^{-1}\|_{L^\frac{r_*}{1-r_*}}^\frac{\theta}{1-\theta} \|u\|_{L^{\alpha}}^\frac{(1-\theta_0)(\alpha+r)}{1-\theta}.
\eeq

Recalculating the last power, with the use of identity in \eqref{traw}, we have
\beq\label{am} 
\frac{(1-\theta_0)(\alpha+r)}{1-\theta}
=\frac{(\alpha+r)-\theta mp}{1-\theta}
=\frac{(\alpha+r)-\theta(\alpha-s+p)}{1-\theta}=\alpha+\mu.
\eeq 
Then we obtain \eqref{ay2} from \eqref{ay1}. Since $\theta_0,\theta\in(0,1)$ in \eqref{am}, we have $\alpha+\mu>0$, which gives $\mu>-\alpha$ in \eqref{mt}.
 \end{proof}

Note from \eqref{powcond} that $p\ge 1/r_*>1$. Conversely, if $p>1$, then the set of $r_*$ that satisfies \eqref{rcond} is not empty.
 
\begin{corollary}\label{PS2} 
 Assume \eqref{powcond} and \eqref{alz}, and let $\bar c,m, \theta, \mu$ be defined as in Lemma \ref{PS1}. 
 
 Let $u(x)$ be as in Lemma \ref{PS1}, and $\varphi(x)$ be a function on $U$,  
and define $v=u+\varphi$. Let $\beta>0$ and $\varep>0$.
\begin{enumerate}[label=\tnum]
\item\label{i24}  Then one has
  \begin{equation}\label{ay0}
\begin{aligned}
 &\int_U |u|^{\alpha+r} dx
  \le \varep \int_U |u|^{\alpha-s}|\nabla u|^p (1+|v|)^{-\beta}dx\\
 &\quad +2^{\frac{\theta(1+(\beta-1)r_*)}{(1-\theta)r_*}}\varep^{-\frac\theta{1-\theta}}(\bar c m)^\frac{\theta p}{1-\theta} 
  \|u\|_{L^\alpha}^{\alpha+\mu}  \Big(\|u\|_{L^\frac{\beta r_*}{1-r_*}}^{\frac{\beta\theta}{1-\theta}}
  +\|1+|\varphi|\|_{L^\frac{\beta r_*}{1-r_*}}^\frac{\beta \theta}{1-\theta}\Big).
\end{aligned}
 \end{equation}
 
\item\label{ii24}  If, in addition,  
\beq\label{alph2} \alpha\ge \beta r_*/(1-r_*),
\eeq 
then one has
 \begin{equation}\label{ay}
\begin{aligned}
 &\int_U |u|^{\alpha+r} dx
  \le \varep \int_U |u|^{\alpha-s}|\nabla u|^p (1+|v|)^{-\beta}dx\\
 &+2^{\frac{\theta(1+(\beta-1)r_*)}{(1-\theta)r_*}}\varep^{-\frac\theta{1-\theta}}(\bar c m)^\frac{\theta p}{1-\theta} 
  \Big(|U|^{\frac{\theta(\alpha(1-r_*)-\beta r_*)}{\alpha r_*(1-\theta)}}\|u\|_{L^\alpha}^{\alpha+\mu+\frac{\beta\theta}{1-\theta}}+
  \|u\|_{L^\alpha}^{\alpha+\mu} \|1+|\varphi|\|_{L^\frac{\beta r_*}{1-r_*}}^\frac{\beta \theta}{1-\theta}\Big).
\end{aligned}
 \end{equation}
 \end{enumerate}
\end{corollary}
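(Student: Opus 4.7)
The plan is to instantiate Lemma \ref{PS1} with the specific weight $W(x)=(1+|v(x)|)^{-\beta}$ (which is positive on $U$), so the general inequality \eqref{ay2} reduces the whole task to controlling the single factor $\|W^{-1}\|_{L^{r_*/(1-r_*)}}^{\theta/(1-\theta)}=\|(1+|v|)^{\beta}\|_{L^{r_*/(1-r_*)}}^{\theta/(1-\theta)}$ in terms of the data $u$ and $\varphi$. Everything else in \eqref{ay2}, including the appearance of $\bar c m$, $\varep$, and $\|u\|_{L^\alpha}^{\alpha+\mu}$, carries over unchanged, so the proof is essentially a single weight computation broken into two cases.

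For part \ref{i24}, I would set $q=r_*/(1-r_*)$ and $\sigma=\theta/(q(1-\theta))$, and use the triangle inequality to write $1+|v|\le |u|+(1+|\varphi|)$. Applying \eqref{ee2} with exponent $p=\beta q$ inside the integral gives
\[
\int_U (1+|v|)^{\beta q}\,dx\le 2^{\beta q}\bigl(\|u\|_{L^{\beta q}}^{\beta q}+\|1+|\varphi|\|_{L^{\beta q}}^{\beta q}\bigr),
\]
and then applying \eqref{ee2} a second time with exponent $\sigma$ after raising to the $\sigma$-th power yields a factor of $2^{\beta q\sigma+\sigma}=2^{\sigma(1+\beta q)}$. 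A short algebraic check using $1/r_*=1+1/q$ shows $\sigma(1+\beta q)=\theta(1+(\beta-1)r_*)/((1-\theta)r_*)$, which is exactly the constant displayed in \eqref{ay0}; the $\|u\|_{L^{\beta q}}^{\beta q}$ piece becomes $\|u\|_{L^{\beta q}}^{\beta q\sigma}=\|u\|_{L^{\beta q}}^{\beta\theta/(1-\theta)}$ (and similarly for the $\varphi$ piece), so \eqref{ay0} falls out after substituting into \eqref{ay2}.

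For part \ref{ii24}, the additional hypothesis $\alpha\ge \beta r_*/(1-r_*)=\beta q$ lets me further bound the $L^{\beta q}$-norm of $u$ by its $L^\alpha$-norm via H\"older's inequality,
\[
\|u\|_{L^{\beta q}}\le |U|^{\frac{1}{\beta q}-\frac{1}{\alpha}}\|u\|_{L^\alpha}.
\]
Raising this to the power $\beta\theta/(1-\theta)$ and checking that $(1/(\beta q)-1/\alpha)\cdot \beta\theta/(1-\theta)$ simplifies to $\theta(\alpha(1-r_*)-\beta r_*)/(\alpha r_*(1-\theta))$ (an easy rearrangement using $q=r_*/(1-r_*)$) produces precisely the $|U|$-power appearing in \eqref{ay}. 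Combining this with the $\|u\|_{L^\alpha}^{\alpha+\mu}$ factor already present merges the two $u$-norms into the single term $\|u\|_{L^\alpha}^{\alpha+\mu+\beta\theta/(1-\theta)}$, while the $\varphi$-term is left untouched.

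The only subtlety — and the step where I would proceed most carefully — is the bookkeeping of exponents to verify that the combined powers of $2$ and of $|U|$ reproduce the slightly opaque expressions in the statement; there is no analytic difficulty beyond Lemma \ref{PS1} itself, since the two applications of \eqref{ee2} and one application of H\"older are all used in a straightforward way.
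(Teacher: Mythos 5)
Your proposal is correct and follows exactly the paper's own route: instantiate Lemma~\ref{PS1} with $W=(1+|v|)^{-\beta}$, split $1+|v|\le |u|+(1+|\varphi|)$, apply \eqref{ee2} twice (once inside the integral, once after raising to the power $\sigma=\frac{\theta(1-r_*)}{r_*(1-\theta)}$) to obtain the constant $2^{\theta(1+(\beta-1)r_*)/((1-\theta)r_*)}$, and then for part~\ref{ii24} use H\"older's inequality with the domain factor $|U|^{1/(\beta q)-1/\alpha}$. All exponent computations check out, so the proof matches the paper's.
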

\begin{proof}
(i) Applying inequality \eqref{ay2} to $W(x)=(1+|v|)^{-\beta}$, we have 
 \begin{equation}\label{ay22}
\begin{aligned}
 \int_U |u|^{\alpha+r} dx
  &\le \varep \int_U |u|^{\alpha-s}|\nabla u|^p (1+|v|)^{-\beta}dx\\
  &\quad +\varep^{-\frac\theta{1-\theta}}(\bar c m)^\frac{\theta p}{1-\theta} \|u\|_{L^\alpha}^{\alpha+\mu} \Big(\int_U (1+|v|)^{\frac{\beta r_*}{1-r_*}}dx\Big)^\frac{(1-r_*)\theta}{r_*(1-\theta)}.
\end{aligned}
 \end{equation}

Now, for the last integral on the right hand side of \eqref{ay22}, using $v=u+\varphi$ and then by applying inequality \eqref{ee2} twice, we have
\begin{align*}
&\Big(\int_U (1+|v|)^{\frac{\beta r_*}{1-r_*}}dx\Big)^\frac{(1-r_*)\theta}{r_*(1-\theta)}
\le 2^{\frac{\beta\theta}{1-\theta}}
\Big(\int_U |u|^{\frac{\beta r_*}{1-r_*}}dx+\int_U (1+|\varphi|)^{\frac{\beta r_*}{1-r_*}}dx\Big)^\frac{(1-r_*)\theta}{r_*(1-\theta)}\\
&\le 2^{\frac{\beta\theta}{1-\theta}}\cdot 2^\frac{(1-r_*)\theta}{r_*(1-\theta)}
\left\{ \Big(\int_U |u|^{\frac{\beta r_*}{1-r_*}}dx\Big)^\frac{(1-r_*)\theta}{r_*(1-\theta)}+\Big(\int_U (1+|\varphi|)^{\frac{\beta r_*}{1-r_*}}dx\Big)^\frac{(1-r_*)\theta}{r_*(1-\theta)}\right\}.
\end{align*}
Then we obtain \eqref{ay0}.

\medskip
(ii) In case \eqref{alph2} is satisfied, by H\"older's inequality
\begin{align*}
\|u\|_{L^\frac{\beta r_*}{1-r_*}}^{\frac{\beta\theta}{1-\theta}} 
\le |U|^{\frac{\theta(\alpha(1-r_*)-\beta r_*)}{\alpha r_*(1-\theta)}} \|u\|_{L^\alpha}^\frac{\beta\theta}{1-\theta}.
\end{align*}
This and \eqref{ay0} yield inequality \eqref{ay}.
\end{proof}

Next, to carry out Moser's iterations in section \ref{maxsec} below, we need the following parabolic  multiplicative Sobolev inequality.
\begin{lemma}[Parabolic version]\label{PS3}
Assume \eqref{powcond} and 
\beq\label{alcond}
\alpha>0,\ 
\alpha\ge s,\ 
\alpha >\frac{(s-p)nr_*}{r_*(n+p)-n}.
\eeq 

Let $m$ and $\bar c$ be defined as in Lemma \ref{PS1}, and $T>0$.
Let $u(x,t)$ be function defined on $U\times(0,T)$ such that $|u(\cdot,t)|^m\in W^{1,r_*p}(U)$ and  $u(\cdot,t)$ vanishes on $\partial U$ for all $t\in (0,T)$. 

\begin{enumerate}[label=\tnum]
    \item\label{i26} Suppose $W(x,t)$ is a positive function  on $U\times (0,T)$.
Then one has
\beq\label{ppsi1}
\begin{aligned}
\|u\|_{L^{\kappa \alpha}(U\times(0,T))}
&\le (\bar c m)^\frac{p}{\kappa\alpha} \esssup_{t\in(0,T)}\|W^{-1}(\cdot,t)\|_{L^\frac{r_*}{1-r_*}}^\frac{1}{\kappa\alpha} \esssup_{t\in(0,T)}\|u(\cdot,t)\|_{L^\alpha}^{1-\tilde \theta}\\
&\quad\times \left(\int_0^T\int_U |u|^{\alpha-s}|\nabla u|^p Wdxdt\right)^\frac1{\kappa\alpha},
\end{aligned}
\eeq
where
\beq\label{defkappa}
\kappa= 1+\frac{r_*(n+p)-n}{nr_*}+\frac{p-s}{\alpha} >1, \quad 
\tilde \theta = \frac{1}{1+ \frac{\alpha(r_*(n+p)-n)}{nr_*(\alpha-s+p)} }\in (0,1).
\eeq 
    \item\label{ii26} Let $\varphi(x,t)$ be a function on $U\times (0,T)$, and define $v=u+\varphi$.
Then one has, for any $\beta>0$, that
\beq\label{ppsi3}
\begin{aligned}
&\|u\|_{L^{\kappa \alpha}(U\times(0,T))}
\le 2^{\frac1{\kappa\alpha}(\beta+\frac{1-r_*}{r_*})}(\bar c m)^\frac{p}{\kappa\alpha}\\
&\times \left(  \esssup_{t\in(0,T)}\|1+|\varphi(\cdot,t)|\|_{L^\frac{\beta r_*}{1-r_*}}^\beta  + \esssup_{t\in(0,T)}\|u(\cdot,t)\|_{L^\frac{\beta r_*}{1-r_*}}^\beta \right)^\frac1{\kappa\alpha}
\esssup_{t\in(0,T)}\|u(\cdot,t)\|_{L^\alpha}^{1-\tilde\theta} \\
&\times \left(\int_0^T\int_U |u|^{\alpha-s}|\nabla u|^p (1+|v|)^{-\beta}dx dt\right)^\frac1{\kappa\alpha}.
\end{aligned}
\eeq
\end{enumerate}
\end{lemma}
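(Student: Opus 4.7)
The plan is to combine the elliptic Sobolev estimate derived in the proof of Lemma \ref{PS1} with a Lebesgue interpolation in space, and then integrate in time. For a.e.\ $t\in(0,T)$ the function $u(\cdot,t)$ satisfies the hypotheses of that elliptic argument up to and including inequality \eqref{229a}; applying \eqref{229a} pointwise in time and raising to the $mp$-th power, with $m$ as in \eqref{mdef} (so that $mp=\alpha-s+p$) and $\bar q=nr_*(\alpha-s+p)/(n-r_*p)$, gives
$$
\|u(\cdot,t)\|_{L^{\bar q}}^{\,mp}
\le (\bar c\,m)^{p}\,\|W^{-1}(\cdot,t)\|_{L^{r_*/(1-r_*)}}\, I(t),
$$
where $I(t)=\int_U|u(x,t)|^{\alpha-s}|\nabla u(x,t)|^p W(x,t)\,dx$.

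Next I will verify two algebraic facts about the numbers in \eqref{defkappa}. First, condition \eqref{alcond}, combined with $r_*(n+p)-n>0$ from \eqref{powcond}, is exactly what forces $\kappa>1$, and also forces $\tilde\theta\in(0,1)$ and $\alpha<\kappa\alpha<\bar q$, which justifies using Lebesgue interpolation between $L^\alpha$ and $L^{\bar q}$ at the intermediate exponent $\kappa\alpha$. Second, a direct computation yields the key identity $\kappa\alpha\,\tilde\theta=mp$. With these in hand, interpolation raised to the $\kappa\alpha$-th power becomes
$$
\|u(\cdot,t)\|_{L^{\kappa\alpha}}^{\kappa\alpha}
\le \|u(\cdot,t)\|_{L^\alpha}^{\kappa\alpha(1-\tilde\theta)}\,\|u(\cdot,t)\|_{L^{\bar q}}^{\,mp}.
$$
Substituting the elliptic bound, integrating over $(0,T)$, pulling $\|u(\cdot,t)\|_{L^\alpha}^{\kappa\alpha(1-\tilde\theta)}$ and $\|W^{-1}(\cdot,t)\|_{L^{r_*/(1-r_*)}}$ out as essential suprema in $t$, and finally taking the $1/(\kappa\alpha)$-th root yields \eqref{ppsi1}.

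For part (ii), I will apply part (i) with $W(x,t)=(1+|v(x,t)|)^{-\beta}$, which reduces the task to bounding $\|W^{-1}(\cdot,t)\|_{L^{r_*/(1-r_*)}}^{1/(\kappa\alpha)}=\|1+|v(\cdot,t)|\|_{L^{\beta r_*/(1-r_*)}}^{\beta/(\kappa\alpha)}$. Writing $v=u+\varphi$ and using the pointwise bound $1+|v|\le(1+|\varphi|)+|u|$, I apply \eqref{ee2} first with exponent $\beta r_*/(1-r_*)$ under the integral, which produces the factor $2^{\beta}$ after raising to the outside $(1-r_*)/r_*$-th power, and then apply \eqref{ee2} once more with exponent $(1-r_*)/r_*$ to split the resulting sum, producing the additional factor $2^{(1-r_*)/r_*}$; the combined constant $2^{\beta+(1-r_*)/r_*}$ then yields $2^{(\beta+(1-r_*)/r_*)/(\kappa\alpha)}$ upon taking the $1/(\kappa\alpha)$-th root. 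Passing to the essential supremum in $t$ and inserting the bound into \eqref{ppsi1} gives \eqref{ppsi3}. The main obstacle is purely bookkeeping: confirming that the weaker hypothesis \eqref{alcond} (as opposed to \eqref{alz}) still delivers $\tilde\theta\in(0,1)$ together with the alignment $\kappa\alpha\tilde\theta=mp$, since this alignment is precisely what permits the elliptic $L^{\bar q}$ bound to be inserted without any loss.
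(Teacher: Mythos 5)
Your proposal is correct and follows essentially the same route as the paper's own proof: both rely on the spacewise Sobolev estimate \eqref{229a} applied at each fixed $t$, the interpolation of $L^{\kappa\alpha}$ between $L^\alpha$ and $L^{\bar q}$, the crucial algebraic identity $\kappa\alpha\tilde\theta=mp$ (which you correctly flag as the mechanism that lets the $L^{\bar q}$ bound be inserted losslessly), integration in $t$ with the other factors pulled out as essential suprema, and for part~(ii) the specialization $W=(1+|v|)^{-\beta}$ followed by two applications of \eqref{ee2}. The only cosmetic difference is the order of raising to powers ($mp$ first, then $\kappa\alpha$, versus the paper's interpolation-then-power sequence), which is algebraically equivalent.
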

\begin{proof}
(i) Let  $\bar q=(r_*p)^*m$ as in \eqref{mqb}.
Denote $$I(t)=\int_U |u(x,t)|^{\alpha-s}|\nabla u(x,t)|^p W(x,t)dx.$$

Suppose, at the moment, $\kappa>1$ and  $\tilde\theta\in(0,1)$ are two numbers such that
\beq\label{tilthe} 
\frac{1}{\kappa\alpha}=\frac{\tilde \theta}{\bar q}+\frac{1-\tilde\theta}{\alpha}.
\eeq 

This particularly implies $\alpha<\kappa\alpha<\bar q$.
Applying  the interpolation inequality gives
\beqs
 \|u(\cdot,t)\|_{L^{\kappa \alpha}}  \le \|u(\cdot,t)\|_{L^{\bar q}}^{\tilde \theta}\|u(\cdot,t)\|_{L^\alpha}^{1-\tilde\theta}.
\eeqs

Applying inequality \eqref{229a}  to estimate $\|u(\cdot,t)\|_{L^{\bar q}}$ on the right-hand side,  and raising both sides of the resulting inequality to power $\kappa \alpha$ yield  
\beq \label{L24}
\begin{aligned}
 \int_U |u(x,t)|^{\kappa \alpha}dx 
&\le \left [(\bar c m)^\frac1m I(t)^\frac1{mp} \|W^{-1}(\cdot,t)\|_{L^\frac{r_*}{1-r_*}}^\frac1{mp}\right]^{\tilde\theta \kappa\alpha} 
\|u\|_{L^\alpha}^{(1-\tilde\theta)\kappa\alpha}.
 \end{aligned}
 \eeq 
 
We impose the condition
 \beq\label{mptil}
 mp=\tilde\theta\kappa\alpha.
 \eeq

Then \eqref{L24} becomes
\beq\label{L232}
 \int_U |u(x,t)|^{\kappa \alpha}dx 
\le (\bar c m)^p I(t) \|W^{-1}(\cdot,t)\|_{L^\frac{r_*}{1-r_*}} \|u(\cdot,t)\|_{L^\alpha}^{(1-\tilde\theta)\kappa\alpha}.
\eeq
 
 Integrating \eqref{L232} in $t$ from $0$ to $T$, we have
\begin{align*}
 \int_0^T\int_U |u(x,t)|^{\kappa \alpha}dx dt
\le (\bar c m)^p \esssup_{t\in(0,T)}\|W^{-1}(\cdot,t)\|_{L^\frac{r_*}{1-r_*}}
\esssup_{t\in(0,T)}\|u(\cdot,t)\|_{L^\alpha}^{(1-\tilde\theta)\kappa\alpha}\int_0^T I(t)dt.
 \end{align*}
Taking power $1/\kappa\alpha$ of both sides of the last inequality gives \eqref{ppsi1}. 
 
 It remains to verify \eqref{tilthe}  and \eqref{mptil}. We compute $\kappa$ and $\tilde \theta$ explicitly from these two equations.
 Multiplying \eqref{tilthe} by $\kappa\alpha$ and then using relation \eqref{mptil}, we have 
\beqs 
1=\frac{mp}{m(r_*p)^*}+\kappa -\frac{mp}{\alpha}=\frac{n-r_*p}{nr_*}+\kappa -\frac{mp}{\alpha}.
\eeqs 

Solving for $\kappa$ and using the value of $m$ given in \eqref{mdef}, we have
\beq\label{kk}
\kappa=1 - \frac{n-r_*p}{nr_*} + \frac{mp}{\alpha}
=\frac{r_*(n+p)-n}{nr_*} + \frac{\alpha-s+p}{\alpha}.
\eeq
Then formula \eqref{defkappa} of $\kappa$ follows.
Using formula of $\kappa$ in \eqref{kk}, and again, formula \eqref{mdef} for $m$, we calculate $\tilde\theta$ from \eqref{mptil} by
\beqs
\tilde\theta=\frac{mp}{\kappa\alpha} = \frac{\alpha-s+p}{ \frac{\alpha (r_*(n+p)-n)}{nr_*}+\alpha-s+p}
=\frac{1}{\frac{\alpha(r_*(n+p)-n)}{nr_*(\alpha-s+p)} +1}.
\eeqs 
We then obtain the formula of $\tilde\theta$ in \eqref{defkappa}.
Because $\alpha>0$, $\alpha\ge s$ and $r_*>n/(n+p)$, we have $\tilde \theta\in(0,1)$.
By the last condition in \eqref{alcond}, we have $\kappa>1$. 
The proof  of \eqref{ppsi1} is complete.

\medskip
(ii) Applying inequality \eqref{ppsi1} to $W(x,t)=(1+|v|)^{-\beta}$ with $v=u+\varphi$, we have
 \beq\label{ppsi2}
\begin{aligned}
\|u\|_{L^{\kappa \alpha}(U\times(0,T))}
&\le (\bar c m)^\frac{p}{\kappa\alpha} \esssup_{t\in(0,T)}\left(\int_U (1+|v|)^\frac{\beta r_*}{1-r_*}dx\right)^\frac{1-r_*}{r_*\kappa\alpha} 
\esssup_{t\in(0,T)}\|u(\cdot,t)\|_{L^\alpha}^{1-\tilde \theta}\\
&\quad\times \left(\int_0^T\int_U |u|^{\alpha-s}|\nabla w|^p (1+|v|)^{-\beta} dxdt\right)^\frac1{\kappa\alpha}.
\end{aligned}
\eeq

By triangle inequality and \eqref{ee2}, we have 
\beq\label{L234}
\begin{aligned}
&\left(\int_U (1+|v|)^\frac{\beta r_*}{1-r_*}dx\right)^\frac{1-r_*}{r_*\kappa\alpha} 
\le 2^\frac{\beta}{\kappa\alpha}\left(\int_U (1+|\varphi|)^\frac{\beta r_*}{1-r_*}dx+\int_U |u|^\frac{\beta r_*}{1-r_*}dx\right)^\frac{1-r_*}{r_*\kappa\alpha} \\
&\le 2^\frac{\beta}{\kappa\alpha}2^\frac{1-r_*}{r_*\kappa\alpha}
\left\{ \left(\int_U (1+|\varphi|)^\frac{\beta r_*}{1-r_*}dx\right)^\frac{1-r_*}{r_*}
+\left(\int_U |u|^\frac{\beta r_*}{1-r_*}dx\right)^\frac{1-r_*}{r_*} \right\}^\frac1{\kappa\alpha}.
\end{aligned}
\eeq
Combining \eqref{ppsi2} and \eqref{L234} yields  \eqref{ppsi3}.
\end{proof}

\section{Estimates for the Lebesgue norms}\label{L-p-est}

Let $u$ be a  nonnegative solution of problem \eqref{ibvpg} in a domain $U\subset \R^3$. 
We will derive estimates for the $L^\alpha$-norms of  $u$ for $\alpha>0$.  
To do so, it is convenient to shift $u$ by its boundary values and deal with a function vanishing on the boundary.

Let $\Psi(x,t)$ be an extension of the boundary data $\psi(x,t)$ from $\Gamma\times(0,T]$ to $\bar U\times[0,T]$. 

Define $\bar u(x,t)=u(x,t)-\Psi(x,t)$ and $\bar u_0(x)=u_0(x)-\Psi(x,0)$.
We derive from \eqref{ibvpg} the equations for $\bar u$:
\beq\label{ubar0}
\begin{aligned}
\begin{cases}
 \begin{displaystyle}\frac{\partial \bar u}{\partial t}\end{displaystyle}=\nabla\cdot (X(u, \Phi(x,t))- \Psi_t\quad &\text{on}\quad U\times (0,\infty),\\
\bar u(x,0)=\bar u_0(x) \quad &\text{on}\quad U,\\
\bar u(x,t)= 0\quad &\text{on}\quad \Gamma\times (0,\infty),
\end{cases}
\end{aligned}
\eeq
where $\Psi_t=\partial\Psi/\partial t$ and
\beq\label{Phi00}
\Phi(x,t)=\nabla u(x,t)+u^2(x,t)\mathcal Z(x,t).
\eeq

We will focus on estimating solution $\bar u$ of \eqref{ubar0}.
Clearly, corresponding estimates for $u=\bar u+\Psi$ will easily follow.

By definition \eqref{Zx1} of $\mathcal Z(\cdot,t)$ and \eqref{Jineq}, we have 
\beq\label{ZZ}
|\mathcal Z(x,t)| \le \mathcal{G}+\Omega^2|\mathbf J^2x|\le M_\mathcal Z\eqdef  \mathcal{G}+\Omega^2 r_U
=\mathcal G+\frac{\phi^2 r_U}{4\varpi^2}R_*^2,
\eeq
where $r_U=\max\{ |x|: x\in\bar{U} \}$. 
Set 
\beq\label{chidef}
\chi_*=\max\{1,R_*^2,\chi_0^2,M_{\mathcal{Z}}\}.
\eeq

We will use $\chi_*$ as the main parameter to measure the effect of the rotation.
Our estimates in this paper will be expressed in terms of $\chi_*$.

It follows \eqref{ZZ} and \eqref{chidef} that
\beq\label{Mzbnd}
|\mathcal Z(x,t)| \le \chi_* \text{ for all $x\in U$ and $t>0$}.
\eeq

We start estimating the Lebesgue norms of the solutions with the following differential inequality.

\begin{lemma}\label{diff1} Assume 
\beq \label{rcond}
\frac{3}{5-a}<r_*<1,\quad r_*\ge \frac1{2-a},
\eeq 
and 
\beq\label{alph1} 
\alpha\ge \frac{2r_*}{1-r_*}, \quad 
\alpha> \frac{3r_*(4-3a)}{r_*(5-a)-3}.
\eeq 
Then one has
\beq\label{al1}
\begin{aligned}
&\frac{d}{dt}\int_U |\bar{u}|^\alpha dx+\frac{c_4}{8 }\chi_*^{-1}\int_U (1+u)^{-2}   |\nabla \bar u|^{2-a} |\bar{u}|^{\alpha-2}dx \\
&\le   C_0\chi_*^{\bar\mu } \left\{\left( \int_U |\bar u|^\alpha dx \right)^{1+\gamma} 
+\left(\int_U |\bar u|^\alpha dx \right)^{1+\gamma'}\|1+|\Psi|\|_{L^\frac{2r_*}{1-r_*}}^\frac{2\theta}{1-\theta}\right\} +C_0 M(t),
\end{aligned}
\eeq
where 
\beq\label{ggam}
\gamma=\frac{\mu}{\alpha}+\frac{2\theta}{\alpha(1-\theta)}>0,\quad 
\gamma'=\frac{\mu}\alpha>0,\quad 
\bar\mu=\frac{3-2a+\theta}{1-\theta}>0
\eeq 
with 
\beq\label{mt2}
\theta =\frac{12(1-a)r_*}{\alpha((5-a)r_*-3)-3ar_*}\in(0,1),\quad 
\mu =\frac{4(1-a)+a\theta}{1-\theta}>0,
\eeq 
positive constants $C_0$ is defined in \eqref{CC} below, which depends on $\alpha$, and
\beq\label{Mdef}
\begin{aligned}
M(t)&=\chi_*^{3-2a}\int_U (1+ |\Psi(x,t)|)^{\alpha+4(1-a)} dx
 +\chi_*^{1-a}\int_U (1+|\nabla\Psi(x,t)|)^\frac{\alpha+4(1-a)}{2}dx \\
&\quad + \chi_*^{-\frac{(3-2a)(\alpha-1)}{5-4a}} \int_U |\Psi_t(x,t)|^\frac{\alpha+4(1-a)}{5-4a}dx.
\end{aligned}
\eeq
\end{lemma}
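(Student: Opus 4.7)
The strategy is the standard $L^\alpha$-energy method, applied carefully to keep track of the dependence on $\chi_*$ at every step. I would multiply equation \eqref{ubar0} by the test function $\alpha|\bar u|^{\alpha-2}\bar u$ (admissible since $\bar u$ vanishes on $\Gamma$), integrate over $U$, and apply the divergence theorem to obtain
\begin{equation*}
\frac{d}{dt}\int_U |\bar u|^\alpha\,dx + \alpha(\alpha-1)\int_U |\bar u|^{\alpha-2} X(u,\Phi)\cdot\nabla\bar u\,dx = -\alpha\int_U|\bar u|^{\alpha-2}\bar u\,\Psi_t\,dx.
\end{equation*}
Using the decomposition $\nabla\bar u = \Phi - u^2\mathcal Z - \nabla\Psi$ splits the flux integral into a coercive piece $\int|\bar u|^{\alpha-2}X(u,\Phi)\cdot\Phi\,dx$ and two remainder pieces. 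For the coercive piece, the lower bound \eqref{X3} together with $(\chi_0+R_*u)^{-2}\ge (2\chi_*)^{-1}(1+u)^{-2}$ and two successive applications of \eqref{ee6} to $|\Phi|^{2-a}$ produce the principal dissipation $c\chi_*^{-1}\int(1+u)^{-2}|\bar u|^{\alpha-2}|\nabla\bar u|^{2-a}dx$ plus error integrals controlled by $|\nabla\Psi|^{2-a}$ and $\chi_*^{2-a}u^{2(2-a)}$, the latter via $|\mathcal Z|\le\chi_*$ from \eqref{Mzbnd}.

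For the two remainder flux terms I would use the upper bound $|X(u,\Phi)|\le c_3|\Phi|^{1-a}$ from \eqref{X1}. In each such term the integrand is split into two factors carrying, respectively, the weight $(1+u)^{-2(1-a)/(2-a)}$ and its reciprocal; Young's inequality with exponents $(2-a)/(1-a)$ and $2-a$ then yields a small $\varep$-multiple of the dissipation plus a leftover of shape $|\bar u|^{\alpha-2}(1+u)^{2(1-a)}$ multiplied either by $u^{2(2-a)}$ or by $|\nabla\Psi|^{2-a}$. The $\Psi_t$-forcing term is handled directly by Young's inequality with exponents $(\alpha+4(1-a))/(\alpha-1)$ and $(\alpha+4(1-a))/(5-4a)$. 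Expanding $u=\bar u+\Psi$ through \eqref{ee2} and applying one more Young step isolates the top-order piece $\int|\bar u|^{\alpha+4(1-a)}dx$, to be treated by Poincar\'e--Sobolev, together with pure-data integrals whose exponents $\alpha+4(1-a)$, $(\alpha+4(1-a))/2$, $(\alpha+4(1-a))/(5-4a)$ match $M(t)$ in \eqref{Mdef} exactly.

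To close the inequality, I would invoke Corollary~\ref{PS2}(ii) with $n=3$, $p=2-a$, $s=2$, $r=4(1-a)$, $\beta=2$, the function ``$u$'' of the corollary replaced by $\bar u$, and $\varphi=\Psi$ so that $v=u$. Under hypotheses \eqref{rcond} and \eqref{alph1}, the conditions \eqref{powcond}, \eqref{alz}, and \eqref{alph2} hold, and the general parameters $\theta,\mu$ from \eqref{mt} reduce precisely to \eqref{mt2}. Inequality \eqref{ay} then bounds $\int|\bar u|^{\alpha+4(1-a)}dx$ by a small $\varep$-multiple of the dissipation plus the terms $\|\bar u\|_{L^\alpha}^{\alpha(1+\gamma)}$ and $\|\bar u\|_{L^\alpha}^{\alpha(1+\gamma')}\|1+|\Psi|\|_{L^{2r_*/(1-r_*)}}^{2\theta/(1-\theta)}$, in agreement with \eqref{ggam}. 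Choosing $\varep$ proportional to $\chi_*^{-1}$ absorbs every dissipation contribution to the left-hand side and leaves \eqref{al1}.

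The main obstacle is the bookkeeping of the $\chi_*$-powers, which accumulate from several independent sources and must sum exactly to $\bar\mu=(3-2a+\theta)/(1-\theta)$ on the nonlinear side and to the precise exponents appearing in $M(t)$. The coercivity step deposits $\chi_*^{-1}$ on the dissipation (so absorption later costs $\chi_*^1$); each Young split peeling $|\Phi|^{1-a}$ off $u^2\mathcal Z$ or $\nabla\Psi$ picks up either $\chi_*^{2-a}$ through $|\mathcal Z|\le\chi_*$ or $\chi_*^{1-a}$ via the $\varep^{-(1-a)}$-factor once $\varep\sim\chi_*^{-1}$; and Corollary~\ref{PS2} contributes $\varep^{-\theta/(1-\theta)}\sim\chi_*^{\theta/(1-\theta)}$. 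Verifying that these factors aggregate correctly, and that the negative $\chi_*$-weight in the $\Psi_t$-integral of $M(t)$ arises from the corresponding balance in the Young step, will require careful arithmetic carried out term by term.
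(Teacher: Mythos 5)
Your proposal follows the paper's two-step argument almost exactly: same test function and decomposition $\nabla\bar u = \Phi - u^2\mathcal Z-\nabla\Psi$, same lower bound \eqref{X3} for coercivity and upper bound \eqref{X1} for the remainder fluxes, the same Young split with the weight $(1+u)^{-2(1-a)/(2-a)}$ and its reciprocal, and the same invocation of Corollary~\ref{PS2}\ref{ii24} with $n=3$, $p=2-a$, $s=2$, $r=4(1-a)$, $\beta=2$, $\varphi=\Psi$. This is the paper's proof.

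The only inaccuracy is in the last paragraph, where you assert that $\varep\sim\chi_*^{-1}$ throughout. That scaling does not absorb the dissipation: the $\varep$-terms produced by the remainder fluxes carry an extra factor $\chi_*$ (from $|\mathcal Z|\le\chi_*$), so to absorb them into the coercive term with coefficient $\chi_*^{-1}$ you must take $\varep\sim\chi_*^{-2}$ (and $\delta\sim\chi_*^{-1}$ for the $\nabla\Psi$-term, which has no $\chi_*$ factor). This yields $\varep^{-(1-a)}\sim\chi_*^{2(1-a)}$, hence $\chi_*^{3-2a}$ after multiplying by the outstanding $\chi_*$, not $\chi_*^{2-a}$. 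Similarly the Poincar\'e--Sobolev $\varep$ must compensate a coefficient $\chi_*^{3-2a}$ against the dissipation's $\chi_*^{-1}$, so $\varep\sim\chi_*^{2a-4}$ and the blow-up factor is $\chi_*^{(4-2a)\theta/(1-\theta)}$. Only then does the sum $3-2a+(4-2a)\theta/(1-\theta)$ equal $\bar\mu=(3-2a+\theta)/(1-\theta)$; your heuristic sum $2-a+\theta/(1-\theta)$ does not match. You flagged this arithmetic as the obstacle, and indeed it is the one place the sketch needs correction.
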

\begin{proof} We proceed in two steps.
In the calculations below, the constants $c_i$'s are independent of $\alpha$ and $\chi_*$, while $C_j$'s are independent of $\chi_*$, but dependent on $\alpha$.

\medskip
\emph{Step 1.} We derive a weaker version of \eqref{al1} first, namely, inequality \eqref{diu} below.

Since $r_*\ge 1/(2-a)>1/2$, one has 
\beq\label{a2} 
\alpha\ge \frac{2r_*}{1-r_*}> \frac{2(1/2)}{1-1/2}=2.
\eeq 
Then we can use the following identities
\beqs 
\frac{\partial}{\partial t} (|\bar u|^\alpha) =\alpha|\bar u|^{\alpha-2}\bar u \frac{\partial \bar u}{\partial t},\quad 
\nabla (|\bar{u}|^{\alpha-2}\bar u)= (\alpha-1)|\bar u|^{\alpha-2}\nabla \bar u.
\eeqs 

Multiplying the PDE in \eqref{ubar0} by $|\bar{u}|^{\alpha-2}\bar u$, integrating over domain $U$, and using integration by parts, we have
\begin{align*}
\frac{1}{\alpha}\frac{d}{dt}\int_U |\bar{u}|^\alpha dx
&  = -(\alpha-1)\int_U X(u, \Phi(x,t)) \cdot(|\bar{u}|^{\alpha-2}\nabla \bar{u})dx-   \int_U\Psi_t|\bar{u}|^{\alpha-2}\bar{u}dx.
\end{align*} 

On the right-hand side, we write $\nabla\bar{u}=\nabla u-\nabla \Psi$ and use  relation \eqref{Phi00} for $\nabla u$ to have 
$$\nabla \bar u(x,t)=\Phi(x,t)- u^2(x,t)\mathcal Z(x,t)-\nabla\Psi=\Phi(x,t)- (\bar u+\Psi)^2\mathcal Z(x,t)-\nabla \Psi.$$ 
We obtain
\beq \label{du1}
\begin{aligned}
\frac{1}{\alpha}\frac{d}{dt}\int_U |\bar{u}|^\alpha dx
&  =-(\alpha-1)\int_U X(u, \Phi(x,t))\cdot \Phi |\bar{u}|^{\alpha-2}dx \\
&\quad +(\alpha-1)\int_U X(u, \Phi(x,t))\cdot \mathcal Z |\bar{u}|^{\alpha-2}(\bar u+\Psi)^2 dx\\
&\quad +(\alpha-1)\int_U X(u, \Phi(x,t))\cdot\nabla\Psi |\bar{u}|^{\alpha-2}dx- \int_U\Psi_t|\bar{u}|^{\alpha-2}\bar u dx.
\end{aligned} 
\eeq 

For the first integral on the right-hand side of \eqref{du1}, we use the first inequality in \eqref{X3}  to estimate 
\begin{align*}
X(u, \Phi)\cdot \Phi&\ge c_5(\chi_0+R_*u)^{-2} (|\Phi|^{2-a}-1)\\
&\ge c_5 \max\{\chi_0,R_*\}^{-2}(1+u)^{-2}|\Phi|^{2-a}
-c_5 \chi_0^{-2}\\
&\ge c_5 \chi_*^{-1}(1+u)^{-2}|\Phi|^{2-a}
-c_5 \chi_0^{-2}.
\end{align*}

For the second integral on the right-hand side of \eqref{du1}, we use Cauchy-Schwarz inequality, the second inequality in \eqref{X1} and \eqref{Mzbnd} to have 
\begin{align*}
|X(u, \Phi)\cdot \mathcal Z|(\bar u+\Psi)^2
&\le  |X(u,\Phi )| |\mathcal Z| \cdot 2(\bar u^2+\Psi^2)
\le 2c_3|\Phi|^{1-a}\chi_* (\bar u^2+\Psi^2).
\end{align*}

By the triangle inequality,  \eqref{Mzbnd} and then \eqref{ee3},
\beqs
|\Phi|^{1-a}\le (|\nabla u|+u^2 \chi_*)^{1-a}\le |\nabla u|^{1-a}+|u|^{2(1-a)}\chi_*^{1-a}.
\eeqs 

Therefore,
\begin{align*}
|X(u, \Phi)\cdot \mathcal Z|(\bar u+\Psi)^2
&\le 2c_3 \chi_*(|\nabla u|^{1-a}+\chi_*^{1-a}|u|^{2(1-a)})(\bar u^2+\Psi^2).
\end{align*}

Similarly, for the third integral on the right-hand side of \eqref{du1},
\begin{align*}
&|X(u, \Phi)\cdot\nabla\Psi |
\le |X(u, \Phi)| |\nabla\Psi |\le  c_3 |\Phi|^{1-a} |\nabla\Psi |
\le c_3 (|\nabla u|^{1-a}+\chi_*^{1-a}|u|^{2(1-a)})|\nabla\Psi |.
\end{align*}

Combining the above estimates of the terms in \eqref{du1} gives
\beq\label{di1}
\frac{1}{\alpha}\frac{d}{dt}\int_U |\bar{u}|^\alpha dx
  \le (\alpha-1)( -I_0+I_1+I_2+I_3+I_4)+I_5,
\eeq 
where
\begin{align*}
&I_0=c_5\chi_*^{-1}\int_U (1+u)^{-2} |\Phi|^{2-a} |\bar{u}|^{\alpha-2}dx,
&&I_1=c_9\int_U |\bar{u}|^{\alpha-2}dx ,\\
&I_2=2 c_3\int_U  (\chi_*|\nabla u|^{1-a}+\chi_*^{2-a}|u|^{2(1-a)})|\bar{u}|^{\alpha}dx,\\
&I_3=2c_3\int_U (\chi_*|\nabla u|^{1-a}+\chi_*^{2-a}|u|^{2(1-a)})|\bar{u}|^{\alpha-2}\Psi^2dx\\
&I_4=c_3\int_U (|\nabla u|^{1-a}+\chi_*^{1-a}|u|^{2(1-a)}) |\nabla\Psi|\, |\bar{u}|^{\alpha-2}dx,
&&I_5= \int_U|\bar{u}|^{\alpha-1}|\Psi_t|dx
\end{align*} 
with $c_9=c_5 \chi_0^{-2}$.

\medskip
\noindent\textit{Estimation of $I_0$.} Applying inequality \eqref{ee6} and estimate \eqref{Mzbnd}, we have
\begin{align*}
|\Phi|^{2-a}
&=|\nabla u+u^2\mathcal Z|^{2-a}\ge 2^{a-1}|\nabla u|^{2-a}-|u|^{2(2-a)}|\mathcal Z|^{2-a}\\
&\ge 2^{a-1}|\nabla u|^{2-a}-|u|^{2(2-a)}\chi_*^{2-a}.
\end{align*}

Hence,
\begin{align*}
    -I_0&\le -2^{a-1}c_5\chi_*^{-1}\int_U (1+u)^{-2} |\nabla u|^{2-a} |\bar{u}|^{\alpha-2}dx
    +c_5\chi_*^{1-a}\int_U (1+u)^{-2}|u|^{2(2-a)}|\bar{u}|^{\alpha-2}dx.
\end{align*}

Regarding the last integrand, we note, for $\beta>0$ and $\gamma\ge 0$, that 
\beq\label{ubg}
u^\beta|\bar{u}|^\gamma
\le (|\bar u|+|\Psi|)^\beta|\bar{u}|^\gamma
\le 2^\beta(|\bar u|^\beta+|\Psi|^\beta)|\bar{u}|^\gamma\\
=2^\beta(|\bar u|^{\beta+\gamma}+|\bar{u}|^\gamma |\Psi|^\beta).
\eeq 

Use $(1+u)^{-2}\le u^{-2}$ and \eqref{ubg} with $\beta=2-2a$ and $\gamma=\alpha-2$, we have 
\beqs 
(1+u)^{-2}|u|^{2(2-a)}|\bar{u}|^{\alpha-2}
\le |u|^{2-2a}|\bar{u}|^{\alpha-2}\le 2^{2-2a}(|\bar u|^{\alpha-2a}+|\Psi|^{2-2a}|\bar{u}|^{\alpha-2}).
\eeqs 

Therefore,
\beqs
-I_0 
\le -2^{-1}c_4\chi_*^{-1}\int_U (1+u)^{-2} |\nabla u|^{2-a}|\bar{u}|^{\alpha-2}dx+J_0,
\eeqs
where
\beqs
J_0=4^{1-a}c_5 \chi_*^{1-a}\left\{\int_U  |\bar{u}|^{\alpha-2a}dx+\int_U  |\bar{u}|^{\alpha-2}|\Psi|^{2-2a} dx\right\}.
\eeqs

\medskip
\noindent\textit{Estimation of $I_2$.} 
We will use the following estimates.

Let $P\ge 0$ and $\varep>0$. We write
\begin{align*}
|\nabla u|^{1-a}P
&=\left(\varep^{(1-a)/(2-a)} |\nabla u|^{1-a} (1+u)^{-2(1-a)/(2-a)} \right)\times \left(\varep^{-(1-a)/(2-a)}(1+u)^{2(1-a)/(2-a)}P\right).
\end{align*}

Applying Young's inequality \eqref{eeY} with powers $(2-a)/(1-a)$ and $2-a$ to the last product, and multiplying the resulting inequality by $|\bar{u}|^{\alpha-2}$, we obtain 
\beqs 
|\nabla u|^{1-a}|\bar{u}|^{\alpha-2}P
\le \varep |\nabla u|^{2-a}(1+u)^{-2}|\bar u|^{\alpha-2}+\varep^{-(1-a)}(1+u)^{2(1-a)}|\bar{u}|^{\alpha-2}P^{2-a}.
\eeqs

For the last term, writing $1+u=\bar{u}+1+\Psi$, then using the triangle inequality and \eqref{ee2}, we have
\begin{align*}
(1+u)^{2(1-a)}\le (|\bar{u}|+1+|\Psi|)^{2(1-a)}\le 2^{2(1-a)}(|\bar u|^{2(1-a)}+(1+|\Psi|)^{2(1-a)}).
\end{align*}

Thus,
\beq \label{uP}
\begin{aligned}
|\nabla u|^{1-a}|\bar{u}|^{\alpha-2}P
&\le \varep |\nabla u|^{2-a}(1+u)^{-2}|\bar u|^{\alpha-2}\\
&\quad +4^{1-a}\varep^{-(1-a)}\left( |\bar u|^{\alpha-2a}P^{2-a}+|\bar{u}|^{\alpha-2}(1+|\Psi|)^{2(1-a)}P^{2-a}
\right).
\end{aligned}
\eeq 

Letting $P=|\bar u|^2$ in \eqref{uP}, we have 
\beq\label{u0}
\begin{aligned}
|\nabla u|^{1-a}|\bar{u}|^\alpha
&\le \varep |\nabla u|^{2-a}(1+u)^{-2}|\bar u|^{\alpha-2}\\
&\quad +4^{1-a}\varep^{-(1-a)}(|\bar u|^{\alpha+4(1-a)}+|\bar{u}|^{\alpha+2(1-a)}(1+|\Psi|)^{2(1-a)}).
\end{aligned}
\eeq 
For the rest of $I_2$, letting $\beta=2-2a$ and $\gamma=\alpha$ in \eqref{ubg}, we have
\beq \label{uu}
u^{2-2a}|\bar{u}|^{\alpha}
\le 2^{2-2a}(|\bar u|^{\alpha+2-2a}+|\Psi|^{2-2a}|\bar{u}|^{\alpha}).
\eeq 

Therefore, \eqref{u0} and \eqref{uu} yield
\begin{align*}
I_2&=2c_3\int_U  (\chi_*|\nabla u|^{1-a}|\bar{u}|^{\alpha}+\chi_*^{2-a}|u|^{2(1-a)}|\bar{u}|^{\alpha})dx\\
&\le 2\varep c_3 \chi_*\int_U (1+u)^{-2}|\nabla u|^{2-a} |\bar{u}|^{\alpha-2}dx+J_2,
\end{align*}
where
\begin{align*}
J_2
&=2\cdot 4^{1-a}\varep^{-(1-a)}c_3\chi_*\left\{ \int_U |\bar{u}|^{\alpha+4(1-a)}dx
 +\int_U |\bar{u}|^{\alpha+2(1-a)} (1+|\Psi|)^{2(1-a)}dx \right\}\\
&\quad +2\cdot 4^{1-a} c_3\chi_*^{2-a}\int_U (|\bar u|^{\alpha+2-2a} + |\bar{u}|^{\alpha}|\Psi|^{2-2a})dx.
\end{align*}

\medskip
\noindent\textit{Estimation of $I_3$.} Using \eqref{uP} with $P=\Psi^2$, and applying \eqref{ubg} to $\beta=2(1-a)$ and $\gamma=\alpha-2$, we obtain
\begin{align*}
I_3&=  2c_3\int_U (\chi_*|\nabla u|^{1-a} |\bar{u}|^{\alpha-2}\Psi^2 + \chi_*^{2-a}|u|^{2-2a}|\bar{u}|^{\alpha-2}\Psi^2) dx\\
&\le 2\varep c_3\chi_*\int_U (1+u)^{-2}|\nabla u|^{2-a} |\bar{u}|^{\alpha-2}dx +J_3,
\end{align*}
where
\begin{align*}
J_3&=2\cdot 4^{1-a}\varep^{-(1-a)}c_3\chi_*\left\{\int_U |\bar{u}|^{\alpha-2a}|\Psi|^{2(2-a)} dx+\int_U |\bar{u}|^{\alpha-2}(|1+|\Psi|)^{2(1-a)}|\Psi|^{2(2-a)}dx \right\}\\
&\quad +2\cdot 4^{1-a}c_3\chi_*^{2-a}\left\{\int_U |\bar{u}|^{\alpha-2a}\Psi^2dx
+\int_U |\bar{u}|^{\alpha-2}|\Psi|^{4-2a} dx\right\}.
\end{align*}

\medskip
\noindent\textit{Estimation of $I_4$.} 
Let $\delta>0$. 
Using \eqref{uP} again with $\varep=\delta$ and $P=|\nabla \Psi|$, and applying inequality \eqref{ubg} to $\beta=2-2a$ and $\gamma=\alpha-2$, we have 
\begin{align*}
I_4
&\le \delta c_3\int_U (1+u)^{-2}|\nabla u|^{2-a}|\bar{u}|^{\alpha-2}dx+J_4,
\end{align*}
where
\begin{align*}
J_4
&=4^{1-a}c_3 \delta^{-(1-a)}\left\{\int_U |\bar{u}|^{\alpha-2a}|\nabla\Psi|^{2-a} dx
+\int_U |\bar{u}|^{\alpha-2}(1+|\Psi|)^{2(1-a)}|\nabla\Psi|^{2-a}dx\right\}\\
&\quad +4^{1-a}c_3\chi_*^{1-a}\left\{\int_U |\bar{u}|^{\alpha-2a}|\nabla\Psi| dx
+\int_U |\bar{u}|^{\alpha-2}|\Psi|^{2-2a}|\nabla\Psi| dx\right\}.
\end{align*}

Combining \eqref{di1} with the above estimates of $I_i$, for $i=0,2,3,4$, we have 
\beq\label{ddtpf}
\begin{aligned}
\frac{1}{\alpha}\frac{d}{dt}\int_U |\bar{u}|^\alpha dx
&\le -(\alpha-1) C_{\varep,\delta}\int_U (1+u)^{-2} |\nabla u|^{2-a} |\bar{u}|^{\alpha-2}dx\\
&\quad + (\alpha-1)(J_0+I_1+J_2+J_3+J_4)+I_5,
\end{aligned}
\eeq 
where 
$C_{\varep,\delta}=2^{-1}c_4\chi_*^{-1} -4 c_3 \varep \chi_* - \delta c_3$.
We select
\beq \label{ed}
\varep=\frac{c_4}{32c_3}\chi_*^{-2}\text{ and } \delta=\frac{c_4}{8c_3}\chi_*^{-1},
\text{ then $C_{\varep,\delta}=c_{10}\chi_*^{-1}$ with $c_{10} =c_4/4$.}
\eeq 

We estimate the first integral on the right-hand side of \eqref{ddtpf}, using \eqref{ee6} and the fact $1+u\ge 1$, by
\begin{align*}
(1+u)^{-2} |\nabla u|^{2-a} |\bar{u}|^{\alpha-2}
&=(1+u)^{-2} |\nabla\bar u+\nabla\Psi|^{2-a} |\bar{u}|^{\alpha-2}\\
&\ge (1+u)^{-2} \left(2^{a-1} |\nabla \bar u|^{2-a}-|\nabla\Psi|^{2-a}\right)|\bar{u}|^{\alpha-2}\\
&\ge 2^{-1} (1+u)^{-2}  |\nabla \bar u|^{2-a}|\bar{u}|^{\alpha-2}-|\nabla\Psi|^{2-a}|\bar{u}|^{\alpha-2}.
\end{align*}
Then
\beq\label{ddtpf1}
\begin{aligned}
\frac{1}{\alpha}\frac{d}{dt}\int_U |\bar{u}|^\alpha dx
&\le -\frac{c_{10}(\alpha-1)}{2\chi_*}\int_U (1+u)^{-2} |\nabla \bar u|^{2-a} |\bar{u}|^{\alpha-2}dx \\
&\quad + (\alpha-1)(J_0+I_1+J_2+J_3+J_4+I_6)+I_5,
\end{aligned}
\eeq 
where 
\beqs
   I_6=c_{10}\chi_*^{-1} \int_U  |\bar{u}|^{\alpha-2} |\nabla \Psi|^{2-a} dx. 
\eeqs

We continue to estimate the right-hand side of \eqref{ddtpf1}.

\medskip
\noindent\textit{Estimation of $J_0,I_1,J_4,I_6$.}
We combine $J_0$, $I_1$, $J_4$, with $\delta$ in \eqref{ed}, and $I_6$, and collect the corresponding terms of $|\bar{u}|^{\alpha-2a}$ and $|\bar{u}|^{\alpha-2}$. All together, they result in
\beq\label{JIs}
J_0+I_1+J_4+I_6
\le c_{11}\chi_*^{1-a}\int_U( |\bar{u}|^{\alpha-2a}(1+|\nabla\Psi|)^{2-a} +|\bar{u}|^{\alpha-2}(1+|\Psi|)^{2(1-a)}(1+|\nabla\Psi|)^{2-a})dx,
\eeq 
where
$c_{11}=4^{1-a}c_5+c_9+2c_3(32c_3/c_4)^{1-a}+c_{10} $.
In the last integral, applying Young's inequality \eqref{eeY} with 
\begin{align*}
&\text{powers $\frac{\alpha+4-4a}{\alpha-2a}$ and $\frac{\alpha+4-4a}{4-2a}$ to the product $|\bar{u}|^{\alpha-2a}(1+|\nabla\Psi|)^{2-a}$, } \text{ and }\\
&\text{powers $\frac{\alpha+4-4a}{\alpha-2}$, $\frac{\alpha+4-4a}{2-2a}$, $\frac{\alpha+4-4a}{4-2a}$ to the product} \\
& \text{$|\bar{u}|^{\alpha-2} (1+|\Psi|)^{2(1-a)} (1+|\nabla\Psi|)^{2-a}$,   we obtain}  
\end{align*}
\beqs
J_0+I_1+J_4+I_6 
\le c_{11}\chi_*^{1-a}\int_U \Big\{ 2|\bar{u}|^{\alpha+4(1-a)}+(1+|\Psi|)^{\alpha+4(1-a)}+2(1+|\nabla\Psi|)^\frac{\alpha+4(1-a)}{2}\Big\}dx.
\eeqs

We conveniently split the last integral with the use of the fact $\chi_*^{1-a}\le \chi_*^{3-2a}$ for the first two terms, and obtain
\beq\label{JIJI}
\begin{aligned}
J_0+I_1+J_4+I_6 
&\le c_{11}\chi_*^{3-2a}\int_U (2|\bar{u}|^{\alpha+4(1-a)}+(1+|\Psi|)^{\alpha+4(1-a)})dx\\
&\quad + 2c_{11}\chi_*^{1-a}\int_U (1+|\nabla\Psi|)^\frac{\alpha+4(1-a)}{2}dx.
\end{aligned}
\eeq 

\medskip
\noindent\textit{Estimation of $J_2,J_3$.}
With the value of $\varep$ in \eqref{ed}, we evaluate and estimate $J_2$ and $J_3$. Note that the powers for $\chi_*$ in $J_2$ and $J_3$ are $3-2a$ and $2-a$ now. Since $3-2a>2-a$ and $\chi_*\ge 1$, we estimate
\beq \label{pJ2} 
J_2
\le c_{12}\chi_*^{3-2a}\int_U \left\{  |\bar{u}|^{\alpha+4(1-a)} 
+  |\bar{u}|^{\alpha+2-2a}(1+|\Psi|)^{2(1-a)}+   |\bar{u}|^{\alpha}|\Psi|^{2-2a}\right\}dx, 
\eeq 
where $c_{12}= 2\cdot 4^{1-a}c_3[1+(32c_3/c_4)^{1-a}]$. 
Similarly,
\beq\label{pJ3}
J_3
\le c_{12}\chi_*^{3-2a}\int_U \left\{  |\bar{u}|^{\alpha-2a}(1+|\Psi|)^{2(2-a)}+  |\bar{u}|^{\alpha-2}(1+|\Psi|)^{2(3-2a)}
\right\}dx.
\eeq 

For the  integrals on the right-hand side \eqref{pJ2} and \eqref{pJ3}, by applying Young's inequality \eqref{eeY} with 
\begin{align*}
&\text{powers $\frac{\alpha+4-4a}{\alpha+2-2a}$ and $\frac{\alpha+4-4a}{2-2a}$ to the product 
$|\bar{u}|^{\alpha+2-2a}(1+|\Psi|)^{2(1-a)}$,}\\
&\text{powers $\frac{\alpha+4-4a}{\alpha}$ and $\frac{\alpha+4-4a}{4-4a}$ to the product
$|\bar{u}|^{\alpha}|\Psi|^{2-2a}$,}\\
&\text{powers $\frac{\alpha+4-4a}{\alpha-2a}$ and $\frac{\alpha+4-4a}{4-2a}$  to the product 
$|\bar{u}|^{\alpha-2a}(1+|\Psi|)^{2(2-a)}$,}\\
&\text{powers $\frac{\alpha+4-4a}{\alpha-2}$ and $\frac{\alpha+4-4a}{6-4a}$ to the product 
$ |\bar{u}|^{\alpha-2}(1+|\Psi|)^{2(3-2a)}$,}
\end{align*}
 we obtain
\begin{align}
J_2+J_3
&\le c_{12}\chi_*^{3-2a}\int_U  (5|\bar{u}|^{\alpha+4(1-a)} +3(1+|\Psi|)^{\alpha+4(1-a)}
+|\Psi|^\frac{\alpha+4(1-a)}{2})dx \notag \\
&\le c_{12}\chi_*^{3-2a}\int_U  (5|\bar{u}|^{\alpha+4(1-a)} +4(1+|\Psi|)^{\alpha+4(1-a)})dx.\label{J23}
\end{align}

\medskip
\noindent\textit{Estimation of $I_5$.} We write
\beqs
 |\bar u|^{\alpha-1} |\Psi_t|=   \Big(\chi_*^\frac{(3-2a)(\alpha-1)}{\alpha+4-4a} |\bar u|^{\alpha-1} \Big) \cdot \Big( \chi_*^{-\frac{(3-2a)(\alpha-1)}{\alpha+4-4a}} |\Psi_t|\Big).
\eeqs

Applying Young's inequality \eqref{eeY} with powers $(\alpha+4-4a)/(\alpha-1)$ and $(\alpha+4-4a)/(5-4a)$  to the last product yields
\begin{align}\label{I5}
I_5&\le \chi_*^{3-2a}\int_U |\bar{u}|^{\alpha+4(1-a)}dx
+\chi_*^{-\frac{(3-2a)(\alpha-1)}{5-4a}} \int_U |\Psi_t|^\frac{\alpha+4(1-a)}{5-4a}dx \notag\\
&\le \chi_*^{3-2a}(\alpha-1)\int_U |\bar{u}|^{\alpha+4(1-a)}dx
+\chi_*^{-\frac{(3-2a)(\alpha-1)}{5-4a}} \int_U |\Psi_t|^\frac{\alpha+4(1-a)}{5-4a}dx.
\end{align}

\medskip
Combining \eqref{ddtpf1} with  estimates \eqref{JIJI}, \eqref{J23},  and \eqref{I5} gives
\beq\label{diu}
\begin{aligned}
&\frac{1}{\alpha}\frac{d}{dt}\int_U |\bar{u}|^\alpha dx
\le -\frac{c_{10}(\alpha-1)}{2\chi_*}\int_U (1+u)^{-2} |\nabla \bar u|^{2-a} |\bar{u}|^{\alpha-2}dx \\
&\quad + c_{13}\chi_*^{3-2a}(\alpha-1)\int_U  |\bar{u}|^{\alpha+4(1-a)} dx
+c_{14}\chi_*^{3-2a}(\alpha-1)\int_U  (1+|\Psi|)^{\alpha+4(1-a)} dx\\
&\quad + 2c_{11}\chi_*^{1-a}(\alpha-1)\int_U (1+|\nabla\Psi|)^\frac{\alpha+4(1-a)}{2}dx 
+\chi_*^{-\frac{(3-2a)(\alpha-1)}{5-4a}} \int_U |\Psi_t|^\frac{\alpha+4(1-a)}{5-4a}dx,
\end{aligned}
\eeq 
where
$c_{13}= 5c_{12}+2c_{11}+1$ and $c_{14}=4c_{12}+c_{11}$.

\medskip
\emph{Step 2.} We improve inequality \eqref{diu} to obtain \eqref{al1}.

We apply Corollary \ref{PS2}\ref{ii24} to $n=3$, $p=2-a$, $r=4(1-a)$, $s=2$,  $\beta=2$, and functions $u:=\bar u(\cdot,t)$, $\varphi:=\Psi(\cdot,t)$, $v:=\bar u(\cdot,t)+\Psi(\cdot,t)=u(\cdot,t)\ge 0$.
Note, in this case, that the number $m$ in \eqref{mdef} is $m=(\alpha-a)/(2-a)$.

Because $n/p=3/(2-a)>1$, then condition \eqref{powcond} becomes \eqref{rcond}.
Also, because of \eqref{a2}, the conditions on $\alpha$ in \eqref{alz} and condition \eqref{alph2} become \eqref{alph1}.
Then it follows inequality \eqref{ay}, for any $\varep>0$, that
\beq \label{imb}
\begin{aligned}
 \int_U |\bar u|^{\alpha+4(1-a)} dx
  &\le \varep \int_U (1+u)^{-2}|\nabla \bar u|^{2-a} |\bar u|^{\alpha-2}dx\\
&\quad  +C_1 \varep^{-\frac\theta{1-\theta}}  \Big(\|\bar u\|_{L^\alpha}^{\alpha+\mu+\frac{2\theta}{1-\theta}}+
  \|\bar u\|_{L^\alpha}^{\alpha+\mu} \|1+|\Psi|\|_{L^\frac{2r_*}{1-r_*}}^\frac{2\theta}{1-\theta}\Big),
  \end{aligned}
  \eeq 
where
\beq\label{C1} 
C_1=\left[ 2^\frac{1+r_*}{r_*} \left(\frac{\bar c(\alpha-a)}{2-a}\right)^{2-a} (1+|U|)^\frac{\alpha(1-r_*)-2r_*}{\alpha r_*}\right]^\frac\theta{1-\theta},
\eeq 
and, according to \eqref{mt},
\begin{align*}
\theta &=\frac{n r r_*}{nr_*(p-s)+\alpha(r_*(n+p)-n)}=\frac{12(1-a)r_*}{\alpha((5-a)r_*-3)-3ar_*},\\
\mu &=\frac{r+\theta(s-p)}{1-\theta}=\frac{4(1-a)+a\theta}{1-\theta}.
\end{align*}
The positive constant $\bar c$ in \eqref{C1} is the one in \eqref{Sobi} that corresponds to the domain $U\subset \R^3$, number $r_*$ in \eqref{rcond} and $p=2-a$. The fact that $\theta\in(0,1)$ in \eqref{mt2} comes from \eqref{mt}.

We utilize inequality \eqref{imb} in \eqref{diu} with $\varep$ chosen to satisfy 
\beqs
c_{13} \varep \chi_*^{3-2a}=\frac{c_{10}}{4\chi_*}, \text{ i.e., }\varep=\frac{c_{10}}{4c_{13}} \chi_*^{2a-4}.
\eeqs 

It results in
\beq \label{di2}
\begin{aligned}
&\frac{1}{\alpha} \frac{d}{dt}\int_U |\bar{u}|^\alpha dx
\le -\frac{c_{10}(\alpha-1)}{4\chi_*} \int_U (1+u)^{-2} |\nabla \bar u|^{2-a} |\bar{u}|^{\alpha-2}dx\\
& + C_2(\alpha-1) \chi_*^{3-2a} \chi_*^\frac{\theta(4-2a)}{1-\theta}\left\{ \|\bar u\|_{L^\alpha}^{\alpha+\mu+\frac{2\theta}{1-\theta}}+
  \|\bar u\|_{L^\alpha}^{\alpha+\mu} \|1+|\Psi|\|_{L^\frac{2r_*}{1-r_*}}^\frac{2\theta}{1-\theta}\right\} +  c_{15}(\alpha-1) M(t),
\end{aligned} 
\eeq 
where 
$C_2=c_{13}C_1 (4c_{13}/c_{10})^\frac{\theta}{1-\theta}$
and $c_{15}=\max\{1,2c_{11},c_{14}\}$.

Note that the explicit power of $\chi_*$ in the second term on the right-hand side of \eqref{di2} is 
\beqs
3-2a+\frac{\theta(4-2a)}{1-\theta}=\frac{3-2a+\theta}{1-\theta}=\bar\mu.
\eeqs

Multiplying \eqref{di2} by $\alpha$, and using the fact $\alpha(\alpha-1)\ge 2$ for the negative term on the right-hand side, we obtain \eqref{al1} with 
\beq \label{CC}
C_0=\max\{C_2,c_{15}\}\alpha(\alpha-1).
\eeq 
With $a,\theta\in(0,1)$, it is clear that $\mu$, $\bar \mu$, $\gamma$, $\gamma'$ are positive numbers. 
The proof is complete. 
\end{proof}

We are now ready to obtain estimates for $\bar u$ in terms of its initial data and the boundary data $\Psi$, at least for short time.

\begin{theorem}\label{aprio1}
Let $r_*$, $\alpha$, $\theta$, $\gamma$, $\bar\mu$, $C_0$ be as in Lemma \ref{diff1}.  Set 
 \begin{align*} 
 C_*&=C_0\left(1+ 2^\frac{2\theta}{1-\theta}+2^{\alpha+4(1-a)-1}+2^{\frac{\alpha+4(1-a)}{2}-1}\right)(1+|U|)^{\max\left\{1,\frac{(1-r_*)\theta}{r_*(1-\theta)}\right\}},\\ 
 V_0&=1+\int_U |\bar u_0(x) |^{\alpha}dx,\
 V(t)= 1+\int_U |\bar u(x,t)|^{\alpha} dx,\\ 
  \mathcal E(t)&=\chi_*^{\bar\mu}\left(1+\|\Psi(\cdot,t)\|_{L^\frac{2r_*}{1-r_*}}^\frac{2\theta}{1-\theta}\right)
  +\chi_*^{3-2a}\int_U |\Psi(x,t)|^{\alpha+4(1-a)} dx\\
&\quad +\chi_*^{1-a}\int_U |\nabla\Psi(x,t)|^\frac{\alpha+4(1-a)}{2}dx 
+ \chi_*^{-\frac{(3-2a)(\alpha-1)}{5-4a}} \int_U |\Psi_t(x,t)|^\frac{\alpha+4(1-a)}{5-4a}dx.
\end{align*}
 
Suppose there are numbers $T>0$ and $B\in(0,1)$ such that
 \beq\label{Bdef}
 \gamma C_*V_0^\gamma \int_0^T \mathcal E(\tau) d\tau\le B.
\eeq 

Then
\beq\label{uest}
 V(t) \le  V_0\left( 1  - \gamma C_*V_0^\gamma \int_0^t \mathcal E(\tau) d\tau  \right)^{-1/\gamma }
 \text{ for all }t\in[0,T].
\eeq 
Consequently,
\beq\label{u2}
 V(t) \le  V_0(1-B)^{-1/\gamma }
 \text{ for all }t\in[0,T],
\eeq 
\beq\label{mixedterm}
 \int_0^T \int_U (1+u(x,t))^{-2}   |\nabla \bar u(x,t)|^{2-a} |\bar{u}(x,t)|^{\alpha-2}dx dt
 \le \frac{8\chi_* }{c_4}V_0\left(1+\frac{B}{\gamma(1-B)^{1+\frac1\gamma}}\right).
\eeq
\end{theorem}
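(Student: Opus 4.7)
The plan is to convert the differential inequality \eqref{al1} from Lemma~\ref{diff1} into a scalar nonlinear ODE of the form $V'(t) + (c_4/(8\chi_*))\mathcal{I}(t) \le C_* V(t)^{1+\gamma}\mathcal{E}(t)$, where $\mathcal{I}(t)$ denotes the weighted gradient integral on the left-hand side of \eqref{al1}, and then to integrate this ODE by separation of variables.

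First I would replace $\int_U |\bar u|^\alpha\,dx$ by $V(t) \ge 1$ on the right-hand side of \eqref{al1}; since $V(t)\ge 1$ and $\gamma'\le \gamma$, the factor $V(t)^{1+\gamma'}$ is dominated by $V(t)^{1+\gamma}$. The factor $\|1+|\Psi|\|_{L^{2r_*/(1-r_*)}}^{2\theta/(1-\theta)}$ is bounded, via the triangle inequality $\|1+|\Psi|\|_{L^p}\le |U|^{1/p}+\|\Psi\|_{L^p}$ and \eqref{ee3}, by $2^{2\theta/(1-\theta)-1}\bigl(|U|^{\theta(1-r_*)/(r_*(1-\theta))}+\|\Psi\|^{2\theta/(1-\theta)}\bigr)$. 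The terms appearing in $M(t)$ in \eqref{al1} are reshaped similarly: $(1+|\Psi|)^{\alpha+4(1-a)}\le 2^{\alpha+4(1-a)-1}(1+|\Psi|^{\alpha+4(1-a)})$, and likewise for $(1+|\nabla\Psi|)^{(\alpha+4(1-a))/2}$. The pure-$\Psi$ and pure-$\nabla\Psi$ parts match the second and third terms of $\mathcal{E}(t)$ exactly, the $\Psi_t$ part of $M(t)$ is identical to the last term of $\mathcal{E}(t)$, and all constant residues, multiplied by their powers of $\chi_*$, are absorbed into the $\chi_*^{\bar\mu}$ term of $\mathcal{E}(t)$ by using $\bar\mu \ge 3-2a\ge 1-a$ together with the factor $(1+|U|)^{\max\{1,\theta(1-r_*)/(r_*(1-\theta))\}}$ built into the definition of $C_*$.

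Once $V'(t)\le C_* V(t)^{1+\gamma}\mathcal{E}(t)$ is established (after dropping the nonnegative gradient term), differentiating $V^{-\gamma}$ yields $(V^{-\gamma})'(t)\ge -\gamma C_*\mathcal{E}(t)$, and integrating from $0$ to $t$ gives
\begin{equation*}
V(t)^{-\gamma}\ge V_0^{-\gamma}\Bigl(1-\gamma C_* V_0^\gamma\int_0^t \mathcal{E}(\tau)\,d\tau\Bigr).
\end{equation*}
Under hypothesis \eqref{Bdef}, the bracket remains $\ge 1-B>0$ on $[0,T]$, so inverting and raising to the power $-1/\gamma$ yields \eqref{uest}, and using $1-\gamma C_*V_0^\gamma\int_0^t\mathcal{E}\ge 1-B$ yields \eqref{u2}.

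Finally, for \eqref{mixedterm} I retain the gradient term and integrate the full inequality over $[0,T]$:
\begin{equation*}
\frac{c_4}{8\chi_*}\int_0^T\mathcal{I}(t)\,dt \le V_0 - V(T) + C_*\int_0^T V(t)^{1+\gamma}\mathcal{E}(t)\,dt \le V_0 + C_*\int_0^T V(t)^{1+\gamma}\mathcal{E}(t)\,dt.
\end{equation*}
Substituting the pointwise bound \eqref{u2}, namely $V(t)^{1+\gamma}\le V_0^{1+\gamma}(1-B)^{-(1+\gamma)/\gamma}$, and then invoking \eqref{Bdef} as $C_*V_0^\gamma\int_0^T\mathcal{E}\,d\tau \le B/\gamma$, gives
\begin{equation*}
C_*\int_0^T V(t)^{1+\gamma}\mathcal{E}(t)\,d\tau \le \frac{V_0\,B}{\gamma(1-B)^{1+1/\gamma}},
\end{equation*}
whence \eqref{mixedterm} after multiplying through by $8\chi_*/c_4$. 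The main difficulty is the purely algebraic bookkeeping in the first step: verifying that the numerical constants from the expansions $(1+x)^p\le 2^{p-1}(1+x^p)$ and $\|1+|\Psi|\|_{L^p}\le|U|^{1/p}+\|\Psi\|_{L^p}$, together with the different explicit powers of $\chi_*$ in \eqref{al1}, do indeed all package into the single constant $C_*$ and the single function $\mathcal{E}(t)$ of the statement.
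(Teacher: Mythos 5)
Your proposal is correct and follows the same route as the paper: fold the right-hand side of Lemma~\ref{diff1} into a single term $C_*\mathcal{E}(t)V(t)^{1+\gamma}$ via $V\ge 1$, $\gamma'<\gamma$, the triangle-inequality/$2^p$ expansions, and the $(1+|U|)$ factor, obtain the ODE inequality $V'+d_0 H\le C_*\mathcal{E}V^{1+\gamma}$, drop the gradient term and separate variables to get \eqref{uest}--\eqref{u2}, then integrate the full inequality over $[0,T]$ and use \eqref{u2} with \eqref{Bdef} for \eqref{mixedterm}. The only cosmetic deviation is your use of $2^{p-1}$ rather than the paper's $2^p$ in one of the expansions, which still lands within $C_*$.
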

\begin{proof}
Let $d_0=c_4/(8 \chi_*)$ and 
$$H(t)=\int_U (1+u(x,t))^{-2}   |\nabla \bar u(x,t)|^{2-a} |\bar{u}(x,t)|^{\alpha-2}dx  .
$$

Let $\gamma'$ and $M(t)$ be defined by \eqref{ggam} and \eqref{Mdef} in Lemma \ref{diff1}.
Noticing from \eqref{ggam} that $\gamma'<\gamma$, we estimate
\beqs
\left( \int_U |\bar u(x,t)|^\alpha dx \right)^{1+\gamma} ,
\left(\int_U |\bar u(x,t)|^\alpha dx \right)^{1+\gamma'}\le V(t)^{1+\gamma}.
\eeqs

Combining  this with \eqref{al1}, we have, for $t>0$, that
\beq\label{Vdiff}
\frac {d}{dt} V(t) +d_0H(t)\le C_0\widetilde M(t) V(t)^{1+\gamma},
\eeq
where
$$\widetilde M(t)=\chi_*^{\bar\mu}\left(1+\|1+|\Psi(\cdot,t)|\|_{L^\frac{2r_*}{1-r_*}}^\frac{2\theta}{1-\theta}\right)
+M(t).$$

We find an upper bound for $\widetilde M(t)$ with a simpler expression in $\Psi$.
Using the triangle inequality for the $L^{\frac{2r_*}{1-r_*}}$-norm and \eqref{ee3}, \eqref{ee2},  we estimate
\beqs
\|1+|\Psi(\cdot,t)|\|_{L^\frac{2r_*}{1-r_*}}^\frac{2\theta}{1-\theta}
\le  2^\frac{2\theta}{1-\theta}\left(|U|^\frac{(1-r_*)\theta}{r_*(1-\theta)}+\|\Psi(\cdot,t)\|_{L^\frac{2r_*}{1-r_*}}^\frac{2\theta}{1-\theta}\right),
\eeqs 
\begin{align*}
M(t)&\le \chi_*^{3-2a}\cdot 2^{\alpha+4(1-a)-1}\left( |U|+\int_U |\Psi(x,t)|^{\alpha+4(1-a)} dx\right)\\
&\quad +\chi_*^{1-a}\cdot 2^{\frac{\alpha+4(1-a)}{2}-1}\left (|U|+ \int_U |\nabla\Psi(x,t)|^\frac{\alpha+4(1-a)}{2}dx\right) \\
&\quad + \chi_*^{-\frac{(3-2a)(\alpha-1)}{5-4a}} \int_U |\Psi_t(x,t)|^\frac{\alpha+4(1-a)}{5-4a}dx.
\end{align*}
Regarding the powers of $\chi_*$, observe that $\bar\mu>3-2a>1-a$. Then 
\beq\label{Mtil}
\widetilde M(t)\le \left(1+ 2^\frac{2\theta}{1-\theta}+2^{\alpha+4(1-a)-1}+2^{\frac{\alpha+4(1-a)}{2}-1}\right)(1+|U|)^{\max\left\{1,\frac{(1-r_*)\theta}{r_*(1-\theta)}\right\}}\mathcal E(t).
\eeq 
Therefore, we obtain from \eqref{Vdiff} and \eqref{Mtil} that
\beq\label{Vest1}
\frac {d}{dt} V(t) +d_0 H(t)\le C_*\mathcal E(t) V(t)^{1+\gamma}.
\eeq

Neglecting $d_0 H(t)$ in \eqref{Vest1} and solving the remaining differential  inequality give  \eqref{uest}. 

As a consequence of \eqref{uest}, we have, for $t\in[0,T]$, 
\beqs 
V(t)\le  V_0\left( 1  - \gamma C_*V_0^\gamma \int_0^T \mathcal E(\tau) d\tau  \right)^{-1/\gamma }
\le (1-B)^{-1/\gamma}V_0.
\eeqs
Thus, we obtain \eqref{u2}.

Integrating inequality \eqref{Vest1} in $t$ from $0$ to $T$, we have
\beqs
d_0\int_0^T H(t)dt \le  V_0 + C_* \int_0^T V(t)^{1+\gamma} \mathcal E(t)dt.
\eeqs
Combining this with estimate \eqref{u2} of $V(t)$ and condition  \eqref{Bdef}, we obtain 
\beqs
\begin{split}
d_0\int_0^T H(t)dt &\le V_0+((1-B)^{-1/\gamma}V_0)^{1+\gamma} C_*\int_0^T \mathcal E(t)dt\\
&\le V_0+ (1-B)^{-1-1/\gamma}V_0  \gamma^{-1}B 
= V_0\left(1+\frac{B}{\gamma(1-B)^{1+\frac1\gamma}}\right).
\end{split}
\eeqs
Then estimate \eqref{mixedterm} follows.
\end{proof}

\begin{remark}
Inequality \eqref{mixedterm} gives an indirect estimate for the gradient $\nabla \bar u$, or, in other words, for its weighted $L^{2-a}$-norm with the weight $|\bar u|^{\alpha-2}/(1+u)^2$ depending on the solution $u$.  In \cite{CHK3} when $X=X(y)$ and $\alpha=2$, a similar $L^{2-a}$-estimate (without a weight) was the starting point for other estimates of higher $L^p$-norms of $\nabla u$. They were obtained by the use of Lady\v zenskaja--Ural$'$ceva's iteration \cite{LadyParaBook68}. It is not known whether this method still works for the PDE \eqref{mainuX} with $X=X(z,y)$.
\end{remark}

\section{Estimates for the essential supremum}\label{maxsec}
 
We establish $L^\infty$-estimates for a solution $\bar u$ of \eqref{ubar0} with possibly unbounded initial data.
They will contain some quantities that only involve the boundary data of the following form.

For numbers $q_1,q_2,q_3,q_4>0$ and $T>T'\ge 0$, define
\beq
\begin{aligned}\label{Mzero}
&\mathcal{M}_{T',T}(q_1,q_2,q_3,q_4)=1+ \chi_*^{1-a}\left(\int_{T'}^T \int_U(1+|\nabla\Psi(x,t)|)^{(2-a)q_1}dx dt\right)^\frac{1}{q_1}\\
&\quad +\chi_*^{1-a}\left(\int_{T'}^T \int_U(1+|\Psi(x,t)|)^{2(1-a)q_2}(1+|\nabla\Psi(x,t)|)^{(2-a)q_2} dx dt\right)^\frac{1}{q_2}\\
&\quad +\chi_*^{3-a}\left(\int_{T'}^T \int_U(1+|\Psi(x,t)|)^{\tilde q q_3} dxdt\right)^\frac{1}{q_3}
+\left(\int_{T'}^T \int_U|\Psi_t|^{q_4} dxdt\right)^\frac{1}{q_4},
\end{aligned}
\eeq 
where $\tilde q=2\max\{2-a,3-2a\}$.

We use Moser's iteration and have technical preparations with key inequalities in Lemmas \ref{GLk} and \ref{iter1} below.
In the following, $Q_T$ denotes the cylinder $U\times(0,T)$ in $\R^4$, and $|Q_T|$ denotes its $4$-dimensional Lebesgue measure.

\begin{lemma} \label{GLk}
Assume numbers $\tilde\kappa$ and $p_i$, for $i=1,2,3,4$, satisfy
\beq\label{kp} 
\tilde \kappa>p_1,p_2,p_3,p_4>1.
\eeq 
For $i=1,2,3,4$, let $q_i$ be the H\"older conjugate exponent of $p_i$, that is, 
\beq\label{piqi} 1/p_i+1/q_i=1\text{ for }i=1,2,3,4.
\eeq 

Let $T>T_2>T_1\ge 0$. If
\beq\label{alp-Large}
\alpha \ge  \max\left \{2,\frac{2 p_3 (1-a)}{\tilde\kappa -p_3}, \frac{4(1-a)}{\tilde\kappa -1}\right \},
\eeq 
then one has
\beq\label{Sest2}
 \sup_{t\in[T_2,T]} \int |\bar u|^\alpha(x,t) dx 
 \le \alpha^2 K  \mathcal{M}_0 (\| \bar u\|_{L^{\tilde\kappa \alpha}(U\times(T_1,T))}^{\alpha-2}+ \| \bar u\|_{L^{\tilde\kappa \alpha}(U\times(T_1,T))}^{\alpha+4(1-a)}),
 \eeq 
 \beq  \label{Sest3}
  \int_{T_2}^T\int_U (1+u)^{-2}|\nabla \bar u|^{2-a}|\bar u|^{\alpha-2} dx dt
\le \frac{2\chi_*}{c_{10}}K \mathcal{M}_0(\| \bar u\|_{L^{\tilde\kappa \alpha}(U\times(T_1,T))}^{\alpha-2}+ \| \bar u\|_{L^{\tilde\kappa \alpha}(U\times(T_1,T))}^{\alpha+4(1-a)}),
\eeq
where $\mathcal{M}_0=\mathcal{M}_{T_1,T}(q_1,q_2,q_3,q_4)$ and
\beq\label{Kdef}
K=c_{16} (1+|Q_T|)\Big(1+\frac1{T_2-T_1}\Big) \text{ with }c_{16} = 9(1+c_{11}+c_{12}).
\eeq 
\end{lemma}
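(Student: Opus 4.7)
\emph{Strategy.} The plan is to carry out the energy estimate in the same spirit as Lemma \ref{diff1} but to \emph{stop} before invoking the Poincar\'e--Sobolev inequality: the whole point of the Moser iteration to come is to retain the $L^{\tilde\kappa\alpha}$-norm of $\bar u$ on the right-hand side rather than trading it, through $L^{\bar q}$, for a gradient norm. The starting point is therefore inequality \eqref{ddtpf1} (with the choices of $\varep$ and $\delta$ from \eqref{ed} already made), which at the cost of a weighted $L^{2-a}$-gradient on the left controls $\frac{1}{\alpha}\frac{d}{dt}\int_U|\bar u|^\alpha dx$ by the sum $(\alpha-1)(J_0+I_1+J_2+J_3+J_4+I_6)+I_5$.

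\emph{Time localization.} Introduce a Lipschitz cutoff $\eta(t)$ with $\eta\equiv 0$ on $[0,T_1]$, $\eta\equiv 1$ on $[T_2,T]$, and $0\le\eta'\le 2/(T_2-T_1)$. Multiplying \eqref{ddtpf1} by $\alpha\eta(t)$, using
\beqs
\alpha\eta\frac{d}{dt}\int_U|\bar u|^\alpha dx=\frac{d}{dt}\Bigl(\eta\int_U|\bar u|^\alpha dx\Bigr)-\eta'(t)\int_U|\bar u|^\alpha dx,
\eeqs
and integrating from $0$ to an arbitrary $\tau\in[T_2,T]$ (so that $\eta(0)=0$, $\eta(\tau)=1$) yields
\beqs
\int_U|\bar u(\tau)|^\alpha dx+\frac{c_{10}\alpha(\alpha-1)}{2\chi_*}\int_{T_2}^\tau\!\!\int_U(1+u)^{-2}|\nabla\bar u|^{2-a}|\bar u|^{\alpha-2}dx\,dt\le\frac{2}{T_2-T_1}\int_{T_1}^T\!\!\int_U|\bar u|^\alpha dx\,dt+\alpha\int_0^T\eta(t)\mathcal R(t)dt,
\eeqs
where $\mathcal R(t)$ denotes the nonnegative remainder $(\alpha-1)(J_0+I_1+J_2+J_3+J_4+I_6)+I_5$. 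Taking $\sup_{\tau\in[T_2,T]}$ produces the left-hand side of \eqref{Sest2}, and taking $\tau=T$ produces that of \eqref{Sest3}.

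\emph{H\"older in space-time and interpolation.} Every integrand appearing in $\mathcal R(t)$ has one of the forms $|\bar u|^\beta|\nabla\Psi|^{2-a}$, $|\bar u|^\beta(1+|\Psi|)^{2(1-a)}|\nabla\Psi|^{2-a}$, $|\bar u|^\beta(1+|\Psi|)^\gamma$ or $|\bar u|^{\alpha-1}|\Psi_t|$, with exponents $\beta\in[\alpha-2,\alpha+2(1-a)]$ and $\gamma\in[2(1-a),2(3-2a)]$, together with two pure $|\bar u|^\beta$-terms for $\beta\in\{\alpha-2,\alpha+4(1-a)\}$. H\"older's inequality on $U\times(T_1,T)$ with the conjugate pair $(p_i,q_i)$ dictated by the boundary-data factor splits each mixed integral as a product of $\|\bar u\|_{L^{\beta p_i}(U\times(T_1,T))}^\beta$ and the matching $\Psi$-norm in \eqref{Mzero}. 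Hypotheses \eqref{kp} and \eqref{alp-Large} are \emph{exactly} what is needed to guarantee $\beta p_i\le\tilde\kappa\alpha$ in every such pair: the lower bound $2p_3(1-a)/(\tilde\kappa-p_3)$ covers the largest coupled exponent $\beta=\alpha+2(1-a)$, while $4(1-a)/(\tilde\kappa-1)$ covers the pure exponent $\beta=\alpha+4(1-a)$. A further H\"older step in space-time gives
\beqs
\|\bar u\|_{L^{\beta p_i}(U\times(T_1,T))}^\beta\le|Q_T|^{\frac{1}{p_i}-\frac{\beta}{\tilde\kappa\alpha}}\|\bar u\|_{L^{\tilde\kappa\alpha}(U\times(T_1,T))}^\beta,
\eeqs
and the elementary bound $x^\beta\le x^{\alpha-2}+x^{\alpha+4(1-a)}$ for $x\ge 0$ and $\alpha-2\le\beta\le\alpha+4(1-a)$ collapses every $\bar u$-factor to the two terms appearing in \eqref{Sest2}--\eqref{Sest3}.

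\emph{Matching constants; main obstacle.} The $\chi_*$-powers generated by $J_0,\ldots,J_4,I_5,I_6$ — the largest being $\chi_*^{3-a}$, coming from the $|\Psi|$-coupled part of $J_3$ — are absorbed into the matching $\chi_*$-factors already built into the definition \eqref{Mzero} of $\mathcal M_0$, the $|Q_T|$-powers are absorbed into the factor $1+|Q_T|$, and the $(T_2-T_1)^{-1}$ from the cutoff produces the factor $1+1/(T_2-T_1)$ in \eqref{Kdef}; the constant $c_{16}$ then collects $c_{11},c_{12}$ with numerical factors from \eqref{ee2}--\eqref{ee3}. The substantive difficulty is entirely in this bookkeeping: one must verify term by term that the $\chi_*$-exponent produced never exceeds the $\chi_*$-exponent carried by the corresponding piece of $\mathcal M_0$, and that the exponent $\tilde q=2\max\{2-a,3-2a\}$ in \eqref{Mzero} has been chosen wide enough to accommodate every $(1+|\Psi|)^\gamma$-factor that emerges from $J_2$ and $J_3$ after the H\"older split.
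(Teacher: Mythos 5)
Your proposal is correct and follows essentially the same approach as the paper: time-localize via a cutoff supported on $(T_1,T]$ with $\eta\equiv 1$ on $[T_2,T]$, integrate the energy inequality \eqref{ddtpf1} in time to obtain both the supremum bound and the weighted-gradient bound, then run space-time H\"older with the pairs $(p_i,q_i)$ to separate the $\Psi$-factors (collected in $\mathcal M_0$) from $L^{\tilde\kappa\alpha}$-powers of $\bar u$, with \eqref{alp-Large} guaranteeing that every exponent stays $\le\tilde\kappa\alpha$ and \eqref{ee4} collapsing them to $\alpha-2$ and $\alpha+4(1-a)$. The only cosmetic difference is that the paper tests the PDE with $|\bar u|^{\alpha-2}\bar u\,\zeta^2(t)$ and carries $\zeta^2$ through the estimates, while you multiply the already-derived inequality \eqref{ddtpf1} by $\alpha\eta$ and apply the product rule—equivalent in substance (though note the stray $\alpha$ on the left of your product-rule display, which should read $\eta\frac{d}{dt}\int_U|\bar u|^\alpha dx$).
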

\begin{proof}
 Let $\zeta=\zeta(t)$ be a $C^1$-function  on $[0,T]$ with 
\begin{align} \notag
&\zeta(t)=0 \text{ for } 0\le t\le T_1,\  
\zeta(t)=1 \text{ for } T_2\le t\le T,\\
\label{xiprop} 
& 0\le \zeta(t)\le 1\text{ and } 0\le \zeta'(t)\le \frac{2}{T_2-T_1} \text{ for }0\le t\le T.
\end{align} 

Multiplying the PDE in \eqref{ubar0}  by $|\bar u|^{\alpha-2}\bar u \zeta^2(t)$, integrating over $U$, and integrating by parts  give
\begin{align*}
&\frac{1}{\alpha}\frac{d}{dt}\int_U |\bar u|^\alpha\zeta^2 dx -\frac{1}{\alpha}\int_U 2|\bar u|^{\alpha} \zeta \zeta' dx\\
 &\quad = -(\alpha-1)\int_U X(u, \Phi(x,t)) | \bar u |^{\alpha-2} \nabla \bar u  \zeta^2 dx
     -   \int_U\Psi_t|\bar u|^{\alpha-2}\bar u \zeta^2 dx.
\end{align*}

Noticing that the function $\zeta=\zeta(t)\ge 0$ is independent of $x$, we have, same as inequality \eqref{di1}, 
\beqs
\frac{1}{\alpha}\frac{d}{dt}\int_U |\bar u|^\alpha\zeta^2 dx -\frac{2}{\alpha}\int_U |\bar u|^{\alpha} \zeta \zeta' dx
  \le (\alpha-1)( -\tilde I_0+\tilde I_1+\tilde I_2+\tilde I_3+\tilde I_4)+\tilde I_5,
\eeqs 
where $\tilde I_i = I_i\zeta^2$ for $i=0,1,\ldots,5$. 
Then, similar to \eqref{ddtpf1},
\beq\label{ddtpf2}
\begin{aligned}
\frac{1}{\alpha}\frac{d}{dt}\int_U |\bar{u}|^\alpha \zeta^2 dx-\frac{2}{\alpha}\int_U |\bar u|^{\alpha} \zeta \zeta' dx
&\le -\frac{c_{10}}2\chi_*^{-1}(\alpha-1)\int_U (1+u)^{-2} |\nabla \bar u|^{2-a} |\bar{u}|^{\alpha-2}\zeta^2 dx \\
&\quad + (\alpha-1)(\tilde J_0+\tilde I_1+\tilde J_2+\tilde J_3+\tilde J_4+\tilde I_6)+\tilde I_5,
\end{aligned}
\eeq 
where  $\tilde J_i = J_i\zeta^2$ for $i=0,2,3,4$, with $\varep$ and $\delta$ particularly chosen in \eqref{ed}, and $\tilde I_6 = I_6\zeta^2 $.  

On the one hand, neglecting the negative term on the right-hand side of \eqref{ddtpf2} and integrating the resulting inequality  in time from $0$ to $t$, for $t\in[T_2,T]$, with the use of the fact $\zeta(0)=0$, and then taking the supremum in $t$ over $[T_2,T]$,  we obtain
\beq\label{supone}
\begin{aligned}
\frac{1}{\alpha}\sup_{t\in[T_2,T]}\int_U |\bar u(x,t)|^\alpha dx
=\frac{1}{\alpha}\sup_{t\in[T_2,T]}\int_U |\bar u(x,t)|^\alpha\zeta^2(t) dx
\le \mathcal J,
\end{aligned}
\eeq 
where
\beq\label{EE}
\mathcal J=(\alpha-1)\int_0^T(\tilde J_0+\tilde I_1+\tilde J_2+\tilde J_3+\tilde J_4+\tilde I_6)dt+\int_0^T\tilde I_5 dt
+ \frac{2}{\alpha}\iint_{Q_T} |\bar u|^{\alpha} \zeta \zeta' dxdt.
\eeq

On the other hand, integrating \eqref{ddtpf2} in $t$ from $0$ to $T$ gives
\beqs
\frac{c_{10}(\alpha-1)}{2 \chi_*}\iint_{Q_T}(1+u)^{-2} |\nabla \bar u|^{2-a}|\bar{u}|^{\alpha-2} \zeta^2dxdt\\
\le \mathcal J.
\eeqs
Hence,
\beq \label{suptwo}
\begin{aligned}
&\int_{T_2}^T\int_U (1+u)^{-2} |\nabla \bar u|^{2-a}|\bar{u}|^{\alpha-2} dxdt
=\int_{T_2}^T\int_U (1+u)^{-2} |\nabla \bar u|^{2-a}|\bar{u}|^{\alpha-2} \zeta^2 dxdt\\
&\le \iint_{Q_T}(1+u)^{-2} |\nabla \bar u|^{2-a}|\bar{u}|^{\alpha-2} \zeta^2dxdt
\le \frac{2\chi_*}{c_{10}(\alpha-1)}\mathcal J.
\end{aligned} 
\eeq 

We focus on estimating the quantity $\mathcal J$ now. Define $Y(\alpha)=\iint_{Q_T} |\bar u|^{\alpha} \zeta dxdt$.

Using the fact $0\le \zeta^2\le\zeta$ and  previous estimate \eqref{JIs}, we have
\begin{align*}
\int_0^T (\tilde J_0+\tilde I_1+\tilde J_4+\tilde I_6)dt 
&\le \int_0^T (J_0+I_1+J_4+I_6)\zeta dt \\
&\le c_{11}\chi_*^{1-a}\iint_{Q_T}  |\bar{u}|^{\alpha-2a}(1+|\nabla\Psi|)^{2-a}  \zeta dx dt\\
&\quad +c_{11}\chi_*^{1-a}\iint_{Q_T} |\bar{u}|^{\alpha-2}(1+|\Psi|)^{2(1-a)}(1+|\nabla\Psi|)^{2-a} \zeta dx dt.
\end{align*} 

On the right-hand side of the preceding inequality, by applying H\"older's inequality with powers $p_1,q_1$ to the first integral on the right-hand side, and with powers $p_2,q_2$ to the second integral, we obtain
\beq\label{tJI}
\int_0^T (\tilde J_0+\tilde I_1+\tilde J_4+\tilde I_6)dt 
\le c_{11}\chi_*^{1-a}[ Y(p_1(\alpha-2a))^\frac1{p_1}E_1+Y(p_2(\alpha-2))^\frac1{p_2}E_2],
\eeq 
where
\begin{align*}
E_1&= \left(\iint_{Q_T}(1+|\nabla\Psi|)^{(2-a)q_1}\zeta dx dt\right)^\frac{1}{q_1},\\  E_2&=\left(\iint_{Q_T}(1+|\Psi|)^{2(1-a)q_2}(1+|\nabla\Psi|)^{(2-a)q_2}  \zeta dx dt\right)^\frac{1}{q_2}.
\end{align*}

Similarly, by the fact $0\le \zeta^2\le\zeta$ and estimates \eqref{pJ2}, \eqref{pJ3},  we have
\beq\label{pJ}
\begin{aligned}
&\int_0^T (\tilde J_2+\tilde J_3)dt
\le \int_0^T J_2\zeta dt + \int_0^T J_3 \zeta dt\\
&\le c_{12}\chi_*^{3-2a}\iint_{Q_T} \left\{  |\bar{u}|^{\alpha+4(1-a)} 
+  |\bar{u}|^{\alpha}|\Psi|^{2(1-a)}
+  |\bar{u}|^{\alpha+2-2a}(1+|\Psi|)^{2(1-a)}\right\}\zeta dxdt\\
&\quad + c_{12}\chi_*^{3-2a}\iint_{Q_T} \left\{  |\bar{u}|^{\alpha-2a}(1+|\Psi|)^{2(2-a)}
+  |\bar{u}|^{\alpha-2}(1+|\Psi|)^{2(3-2a)}\right\}\zeta dxdt.
\end{aligned}
\eeq 

Note that each power of $|\Psi|$ or $(1+|\Psi|)$ in \eqref{pJ} are less than or equal to $\tilde q$.
Then
\begin{align*}
&\int_0^T (\tilde J_2+\tilde J_3)dt
\le c_{12}\chi_*^{3-2a}\Big\{ \iint_{Q_T}  |\bar{u}|^{\alpha+4(1-a)} \zeta dxdt
+\iint_{Q_T}   |\bar{u}|^{\alpha}(1+|\Psi|)^{\tilde q}\zeta dxdt\\
&\quad + \iint_{Q_T}  |\bar{u}|^{\alpha+2-2a}(1+|\Psi|)^{\tilde q}\zeta dxdt
+\iint_{Q_T} |\bar{u}|^{\alpha-2a}(1+|\Psi|)^{\tilde q}\zeta dxdt\\
&\quad + \iint_{Q_T}  |\bar{u}|^{\alpha-2}(1+|\Psi|)^{\tilde q}\zeta dxdt\Big\}.
\end{align*}

Applying H\"older's inequality with powers $p_3$ and $q_3$ to the last four integrals yields
\beq\label{pJJ}
\begin{aligned}
\int_0^T (\tilde J_2+\tilde J_3)dt
&\le c_{12}\chi_*^{3-2a}\Big\{ Y(\alpha+4(1-a)) +Y(p_3\alpha)^\frac{1}{p_3}E_3
+Y(p_3(\alpha+2-2a))^\frac{1}{p_3}E_3\\
&\quad +Y(p_3(\alpha-2a))^{\frac{1}{p_3}}E_3+Y(p_3(\alpha-2))^\frac{1}{p_3}E_3\Big\},
\end{aligned}
\eeq 
where
\beqs
E_3=\left(\iint_{Q_T}(1+|\Psi|)^{\tilde q q_3}\zeta dxdt\right)^\frac{1}{q_3}.
\eeqs

Next, by the fact $0\le \zeta^2\le\zeta$ and H\"older's inequality with powers $p_4$ and $q_4$,
\beq\label{tI5}
    \int_0^T\tilde I_5dt\le  \iint_{Q_T}|\bar{u}|^{\alpha-1}|\Psi_t|\zeta dxdt\le  Y(p_4(\alpha-1))^\frac{1}{p_4}E_4,
\eeq 
where
\beqs
    E_4=\left(\iint_{Q_T}|\Psi_t|^{q_4}\zeta dxdt\right)^\frac{1}{q_4}.
\eeqs 

Finally, for the last term of $\mathcal J$ in \eqref{EE}, using the second property of \eqref{xiprop}, we have 
\beq \label{nzz}
    \frac{2}{\alpha}\iint_{Q_T} |\bar u|^{\alpha} \zeta \zeta' dxdt\le  \frac{4}{\alpha(T_2-T_1)} Y(\alpha)
    \le  \frac{2}{T_2-T_1} Y(\alpha).
\eeq

Combining formula \eqref{EE} with the  above estimates \eqref{tJI}, \eqref{pJJ}, \eqref{tI5}, \eqref{nzz}  yields 
\beq\label{Imain5}
\begin{aligned}
&\mathcal J
\le (\alpha-1)c_{11}\chi_*^{1-a}\left[  Y(p_1(\alpha-2a))^\frac1{p_1}E_1+Y(p_2(\alpha-2))^\frac1{p_2}E_2\right]\\
&\quad +(\alpha-1)c_{12}\chi_*^{3-2a}\Big\{ Y(\alpha+4(1-a)) +\Big[Y(p_3\alpha)^\frac{1}{p_3}
+Y(p_3(\alpha+2-2a))^\frac{1}{p_3}\\
&\quad +Y(p_3(\alpha-2a))^{\frac{1}{p_3}}+Y(p_3(\alpha-2))^\frac{1}{p_3}\Big]E_3\Big\} + Y(p_4(\alpha-1))^\frac{1}{p_4}E_4
 +\frac{2}{T_2-T_1} Y(\alpha).
\end{aligned}
\eeq

Define $Y_*=\|\bar{u}\|_{L^{\tilde{\kappa}\alpha}(U\times(T_1,T))}$. If $0<\beta \le \tilde \kappa \alpha$ then, by H\"older's inequality,
\beq\label{X}
Y(\beta)=\iint_{Q_T} |\bar u|^\beta \zeta dxdt \le \int_{T_1}^T \int_U |\bar u|^\beta  dxdt \le  Y_*^\beta |Q_T|^{1-\frac{\beta}{\tilde \kappa \alpha}} \le Y_*^\beta (1+|Q_T|).
\eeq

Under conditions \eqref{kp} and \eqref{alp-Large}, one has  
\beqs 
p_1(\alpha-2a), p_2(\alpha-2), p_4(\alpha-1) < \tilde \kappa\alpha \text{ and }
\alpha+4(1-a), p_3(\alpha+2-2a)\le \tilde \kappa\alpha .
\eeqs 

Thus, we have from \eqref{Imain5}, \eqref{X} that
\begin{align*}
\mathcal J&\le (\alpha-1)(1+|Q_T|)c_{11}\chi_*^{1-a}\left [Y_*^{\alpha-2a}E_1+Y_*^{\alpha-2}E_2\right ]\\
&\quad +(\alpha-1)(1+|Q_T|)c_{12}\chi_*^{3-2a}\{Y_*^{\alpha+4(1-a)}+ [Y_*^{\alpha}+Y_*^{\alpha+2-2a}+Y_*^{\alpha-2a}+Y_*^{\alpha-2}]E_3\}\\
&\quad + (1+|Q_T|) Y_*^{\alpha-1}E_4
 +\frac{2}{T_2-T_1} (1+|Q_T|) Y_*^{\alpha}.
\end{align*}
It follows that
\beqs
\mathcal J\le (1+c_{11}+c_{12})(1+|Q_T|)(\alpha-1)\Big(1+\frac {1}{T_2-T_1}\Big)M_0  J,
\eeqs
 where
 \begin{align*}
M_0&= 1+\chi_*^{1-a}(E_1+E_2)+\chi_*^{3-a}E_3+E_4,\\
J &= 2Y_*^{\alpha-2}+2Y_*^{\alpha-2a}+Y_*^{\alpha-1}+3Y_*^{\alpha}+Y_*^{\alpha +2(1-a)}+Y_*^{\alpha +4(1-a)}.
\end{align*} 

Because $0\le\zeta\le 1$ on $[0,T]$ and $\zeta=0$ on $[0,T_1]$, we have $M_0\le \mathcal M_0$.

Thanks to inequality \eqref{ee4}, one has
$$Y_*^{\alpha-2a},Y_*^{\alpha-1},Y_*^{\alpha},Y_*^{\alpha +2(1-a)}
\le Y_*^{\alpha-2} +Y_*^{\alpha+4(1-a)},$$
hence, $J \le 9(Y_*^{\alpha-2} +Y_*^{\alpha+4(1-a)})$. 
Therefore, 
\beq\label{I0i}
\mathcal J \le (\alpha-1)K \mathcal{M}_0(Y_*^{\alpha-2} +Y_*^{\alpha+4(1-a)}). 
\eeq

Then estimate \eqref{Sest2} follows \eqref{supone} and \eqref{I0i}, while estimate \eqref{Sest3} follows \eqref{suptwo} and \eqref{I0i}. The proof is complete.
\end{proof}

\begin{lemma}\label{iter1} 
Let $r_*$ satisfy \eqref{rcond} and set $\lambda_0= (r_*(5-a)-3)/(3r_*)$.
Assume \eqref{kp}, \eqref{piqi},  
\beq\label{alph}
\alpha \ge  \max\left \{2,\frac{2p_3(1-a)}{\tilde\kappa -p_3}, \frac{4(1-a)}{\tilde\kappa -1}\right \} \text{ and } \alpha>\frac{a}{\lambda_0}.
\eeq 

Let 
\beq\label{newkappa}
\kappa=1+\lambda_0-\frac{a}{\alpha}, \quad 
\tilde \theta = \frac{1}{1+ \frac{\lambda_0 \alpha}{\alpha-a} },\quad
\mu_1=1+\frac{\tilde \theta a}{\alpha-a}.
\eeq 

If  $T >T_2 >T_1\ge 0$ then
 \beq\label{g2}
 \|\bar u\|_{L^{\kappa \alpha}(U\times(T_2,T))}\le (A_\alpha B_\alpha)^\frac1\alpha
 \left( \|\bar u\|_{L^{\tilde\kappa \alpha}(U\times(T_1,T))}^{\nu_1}+\|\bar u\|_{L^{\tilde\kappa \alpha}(U\times(T_1,T))}^{\nu_2} \right)^\frac1\alpha,
 \eeq
 where 
$\nu_1=(\alpha-2)\mu_1$, $\nu_2=(\alpha+4(1-a))\mu_1$,
 \begin{align}\label{Aalp}
 A_\alpha&=  2^{\mu_1-1+\frac{1}{\kappa}(2+\frac{1}{r_*})}
 \left[\frac{1}{c_{10}}\left( \frac{\bar c}{2-a}\right)^{2-a}\right ]^\frac{1}{\kappa}
 c_{16}^{\mu_1},\\
 \label{Balp}
B_\alpha&=
 \chi_*^\frac{1}{\kappa}
 \alpha^{2\mu_1-\frac{a}{\kappa}} \hat E^{\frac{1}{\kappa}} \left[(1+ |Q_T|)\Big(1+\frac1{T_2-T_1}\Big) \mathcal{M}_0\right]^{\mu_1}
 \end{align}
 with $\mathcal M_0=\mathcal{M}_{T_1,T}(q_1,q_2,q_3,q_4)$ and 
 \beqs 
 \hat E=\esssup_{t\in(T_2,T)}\|1+|\Psi(\cdot,t)|\|_{L^\frac{2 r_*}{1-r_*}}^2  + \esssup_{t\in(T_2,T)}\|\bar{u}(\cdot,t)\|_{L^\frac{2 r_*}{1-r_*}}^2.
\eeqs 
\end{lemma}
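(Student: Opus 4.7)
The plan is to combine the parabolic multiplicative Sobolev inequality of Lemma \ref{PS3}\ref{ii26} with the two energy estimates from Lemma \ref{GLk}. First I would apply Lemma \ref{PS3}\ref{ii26} on the cylinder $U\times(T_2,T)$ to the triple $u:=\bar u(\cdot,t)$, $\varphi:=\Psi(\cdot,t)$, $v:=u(\cdot,t)\ge 0$, choosing the parameters $n=3$, $p=2-a$, $s=2$, $\beta=2$. Under these choices condition \eqref{powcond} reduces to \eqref{rcond}, while \eqref{alcond} reduces to $\alpha\ge 2$ and $\alpha>a/\lambda_0$, both of which are contained in \eqref{alph}. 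A direct substitution shows that the resulting $\kappa$ and $\tilde\theta$ coincide with those listed in \eqref{newkappa}, and that $m=(\alpha-a)/(2-a)$.

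Raising the resulting inequality to the power $\alpha$ yields
\begin{equation*}
 \|\bar u\|_{L^{\kappa\alpha}(U\times(T_2,T))}^\alpha
 \le 2^{(\beta+(1-r_*)/r_*)/\kappa}(\bar c m)^{(2-a)/\kappa}\hat E^{1/\kappa}\, S^{1-\tilde\theta}\, G^{1/\kappa},
\end{equation*}
where I denote $S=\esssup_{t\in(T_2,T)}\|\bar u(\cdot,t)\|_{L^\alpha}^\alpha$ and $G=\int_{T_2}^T\!\!\int_U |\bar u|^{\alpha-2}|\nabla\bar u|^{2-a}(1+u)^{-2}\,dxdt$. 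Next, since \eqref{alph} includes the hypothesis \eqref{alp-Large}, Lemma \ref{GLk} applies and produces $S\le \alpha^2 K\mathcal M_0 J_*$ together with $G\le (2\chi_*/c_{10})K\mathcal M_0 J_*$, where $J_*=Y_*^{\alpha-2}+Y_*^{\alpha+4(1-a)}$ and $Y_*=\|\bar u\|_{L^{\tilde\kappa\alpha}(U\times(T_1,T))}$. Substituting these bounds collects the factors $K\mathcal M_0 J_*$ with total exponent $(1-\tilde\theta)+1/\kappa$; the pivotal algebraic identity I would verify is
\begin{equation*}
(1-\tilde\theta)+\tfrac{1}{\kappa}=\mu_1,
\end{equation*}
which follows at once from the explicit formulas $1-\tilde\theta=\lambda_0\alpha/(\alpha-a+\lambda_0\alpha)$ and $1/\kappa=\alpha/(\alpha-a+\lambda_0\alpha)$.

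To package the right-hand side, I would split $J_*^{\mu_1}\le 2^{\mu_1-1}(Y_*^{\nu_1}+Y_*^{\nu_2})$ via \eqref{ee3} (noting $\mu_1>1$), use the crude bound $(\alpha-a)^{(2-a)/\kappa}\le \alpha^{(2-a)/\kappa}$ to convert the $m$-factor into a power of $\alpha$, and apply the identity $2(1-\tilde\theta)+(2-a)/\kappa=2\mu_1-a/\kappa$ to produce the exponent of $\alpha$ appearing in \eqref{Balp}. Collecting the $2$-exponents gives $\mu_1-1+(2+1/r_*)/\kappa$ exactly as in \eqref{Aalp}, while the remaining universal constants combine into the bracketed factor there. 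Extracting the $\alpha$-th root then produces \eqref{g2} with $\nu_1=(\alpha-2)\mu_1$ and $\nu_2=(\alpha+4(1-a))\mu_1$.

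The main obstacle is purely bookkeeping: tracking every constant through the power $\alpha$ (and then $1/\alpha$) to confirm that the exponents on $2$, $\bar c/(2-a)$, $c_{10}$, $\chi_*$, $\alpha$, $\hat E$, $K$, $\mathcal M_0$ and $J_*$ end up precisely as written in \eqref{Aalp} and \eqref{Balp}. Once the identity $(1-\tilde\theta)+1/\kappa=\mu_1$ and the companion identity for the power of $\alpha$ are in place, no analytic subtlety remains; everything else is routine arithmetic and invocations of the already-proved Lemmas \ref{PS3} and \ref{GLk}.
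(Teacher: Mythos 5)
Your proposal is correct and follows essentially the same route as the paper: apply Lemma \ref{PS3}\ref{ii26} with $n=3$, $p=2-a$, $s=2$, $\beta=2$ on $U\times(T_2,T)$, bound the resulting $\esssup$ and space-time gradient terms via Lemma \ref{GLk}, verify the identity $(1-\tilde\theta)+1/\kappa=\mu_1$, and then collect constants (using \eqref{ee3} to split $J_*^{\mu_1}$, the bound $m\le\alpha/(2-a)$, and the companion identity for the exponent of $\alpha$). The only cosmetic difference is that you raise to the power $\alpha$ first and extract the $\alpha$-th root at the end, whereas the paper keeps things in norm form throughout; the exponent bookkeeping matches exactly.
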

\begin{proof} 
We apply Lemma \ref{PS3}\ref{ii26} to  $n=3$, $p=2-a$, $s=2$, $\beta=2$, and functions $u:=\bar u$, $\varphi:=\Psi$, $v:=\bar u+\Psi=u$,
and the interval $(T_2,T)$ in place of $(0,T)$.
Note, from \eqref{mdef}, that  $m=(\alpha-a)/(2-a)$.
Same as in Step 2 in the proof of Lemma \ref{diff1}, condition \eqref{powcond} becomes \eqref{rcond}.
Clearly, condition \eqref{alcond} becomes 
\beqs
\alpha \ge 2 \text{ and } \alpha>\frac{a}{\lambda_0},
\eeqs 
which is satisfied thanks to \eqref{alph}.
Then, by inequality  \eqref{ppsi3}, one has
\beq\label{beginest2}
\|\bar{u}\|_{L^{\kappa \alpha}(U\times(T_2,T))}
\le \hat{C}^{\frac1{\kappa\alpha}} \hat E^\frac1{\kappa\alpha}
\esssup_{t\in(T_2,T)}\|\bar{u}(\cdot,t)\|_{L^\alpha}^{1-\tilde\theta}\left(\int_{T_2}^T\int_U |\bar{u}|^{\alpha-2}|\nabla \bar{u}|^{2-a} (1+|u|)^{-2}dxdt\right)^\frac1{\kappa\alpha},
\eeq
where $\hat C=2^{2+\frac{1-r_*}{r_*}}(\bar c\cdot \frac{\alpha-a}{2-a})^{2-a}$,  the numbers $\kappa$  and $\tilde \theta$ are given in \eqref{defkappa}, which assume the values in \eqref{newkappa} now. 

We estimate the right-hand side of \eqref{beginest2} by Lemma \ref{GLk}. Note that condition \eqref{alp-Large} is the first part of \eqref{alph}.
Recalling that $K$ is defined in \eqref{Kdef}, we denote 
\beqs
Y_*= \|\bar u\|_{L^{\tilde \kappa\alpha}(U\times(T_1,T))},\quad 
M_1=\alpha^2K \mathcal{M}_0, \quad 
S=M_1 (Y_*^{\alpha-2} + Y_*^{\alpha+4(1-a)}).
\eeqs  

By estimates \eqref{Sest2} and \eqref{Sest3} in Lemma \ref{GLk}, we have 
\beq\label{Szeroest2}
\esssup_{t\in(T_2,T)}\int_U |\bar u|^\alpha dx\le S,\quad 
\int_{T_2}^T\int_U |\bar u|^{\alpha-2}|\nabla \bar u|^{2-a} (1+|u|)^{-2}dxdt\le \frac{2\chi_*}{c_{10}\alpha^2}S.
\eeq 

Then combining \eqref{beginest2} and \eqref{Szeroest2} yields 
\begin{align}
\| \bar u\|_{L^{\kappa\alpha}(U\times (T_2,T))} 
&\le \left (\frac{2\chi_*}{c_{10}\alpha^2}\hat C \hat E\right )^\frac{1}{\kappa\alpha} 
S^{\frac1\alpha(1-\tilde\theta + \frac1{\kappa})} \notag \\
&=\left\{  \left(\frac{2\chi_*}{c_{10}\alpha^2}\hat C \hat E\right )^\frac{1}{\kappa} 
M_1^{1-\tilde\theta + \frac1{\kappa}} 
\left(Y_*^{\alpha-2} + Y_*^{\alpha+4(1-a)}\right)^{1-\tilde\theta + \frac1{\kappa}} \right\}^{\frac1\alpha}.\label{uY}
\end{align}

Using the formula of $\kappa$ in \eqref{mptil}, we have
\beqs 
    \frac1\kappa-\tilde \theta=\frac{\tilde\theta\alpha}{mp}-\tilde \theta=\frac{\tilde\theta\alpha}{\alpha-s+p}-\tilde \theta=\frac{\tilde \theta(s-p)}{\alpha-s+p}=\frac{\tilde \theta a}{\alpha-a}.
\eeqs
It follows that the power $1-\tilde\theta + \frac1{\kappa}$ in \eqref{uY} is exactly the number $\mu_1>1$ defined in \eqref{newkappa}.
Applying inequality \eqref{ee3} to $(Y_*^{\alpha-2} + Y_*^{\alpha+4(1-a)})^{\mu_1}$ in \eqref{uY}, we obtain
\beq\label{nJ}
\begin{aligned}
\| \bar u\|_{L^{\kappa\alpha}(U\times (T_2,T))} 
&\le \left\{ \left (\frac{2\chi_*}{c_{10}\alpha^2}\hat C \hat E\right )^\frac{1}{\kappa} M_1^{\mu_1} 
2^{\mu_1-1} (Y_*^{(\alpha-2)\mu_1}  + Y_*^{(\alpha+4(1-a))\mu_1})\right\}^{\frac 1 \alpha}\\
&= M_2^\frac 1{\alpha}(Y_*^{\nu_1}  + Y_*^{\nu_2})^{\frac 1 \alpha},
\text{ where }M_2 = 2^{\mu_1-1}\left(\frac{2\chi_*}{c_{10}\alpha^2}\hat C \hat E\right)^\frac{1}{\kappa} M_1^{\mu_1}.
\end{aligned}
\eeq

Simple bound $\alpha-a<\alpha$ in the formula of $\hat C$, and elementary calculations give
\beq\label{MAal}
\begin{aligned}
M_2 &\le 2^{\mu_1-1}\left [\frac{2^{3+\frac{1-r_*}{r_*}}\chi_*}{c_{10}\alpha^2}\left(\frac{\bar c\alpha}{2-a}\right)^{2-a} \hat E\right]^\frac{1}{\kappa}\\
&\quad\times\left [c_{16}(1+|Q_T|)\Big(1+\frac1{T_2-T_1}\Big)\alpha^2 \mathcal{M}_0\right ]^{\mu_1}=A_\alpha B_\alpha.
\end{aligned}
\eeq
Therefore, we obtain \eqref{g2} from \eqref{nJ} and \eqref{MAal}.
\end{proof}

We simplify inequality \eqref{g2} to make it more suitable to the Moser iteration below.

Firstly, observe that $1<\mu_1<1+a/(\alpha-a)$, and, hence, the powers $\nu_1$ and $\nu_2$ in \eqref{g2} can be simply bounded by
$\nu_3<\nu_1<\nu_2<\nu_4$, where 
\beq \label{nns}
 \nu_3=\nu_{3,\alpha}\eqdef \alpha-2\text{ and }\nu_4=\nu_{4,\alpha}\eqdef(\alpha+4(1-a))\left( 1 + \frac{a}{\alpha-a}\right).
\eeq 
Then, thanks to \eqref{ee4},
\beq\label{g5}
\|\bar u\|_{L^{\tilde\kappa \alpha}(U\times(T_1,T))}^{\nu_1}+\|\bar u\|_{L^{\tilde\kappa \alpha}(U\times(T_1,T))}^{\nu_2}
\le 2\left(\|\bar u\|_{L^{\tilde\kappa \alpha}(U\times(T_1,T))}^{\nu_3}+\|\bar u\|_{L^{\tilde\kappa \alpha}(U\times(T_1,T))}^{\nu_4}\right).
\eeq 

Secondly, using the facts $\mu_1\le 2$ and $\kappa\ge 1$, we estimate $A_\alpha$ in \eqref{Aalp} by
\beq\label{Aa}
 A_\alpha\le 2^{3+\frac{1}{r_*}}\bar c_1c_{16}^2, \text{ where } 
 \bar c_1=\max\left\{1,\frac{1}{c_{10}}\left( \frac{\bar c}{2-a}\right)^{2-a}\right\}.
\eeq 

Thirdly, we estimate $B_\alpha$ given by formula \eqref{Balp}. Regarding the powers in that formula,  note  that 
$\tilde \theta \le \frac{1}{1+\lambda_0},$
then 
\beq\label{mm2}
\mu_1\le 1+\tilde\theta a\le \mu_2\eqdef 1+\frac{a}{1+\lambda_0}.
\eeq  
Property \eqref{mm2} and the fact $\kappa\le 1+\lambda_0$ yield that the power of $\alpha$ satisfies 
\beq\label{mm3}
2\mu_1-\frac{a}{\kappa}\le  2\mu_2-\frac{a}{1+\lambda_0}=\mu_3\eqdef 2+\frac{a}{1+\lambda_0}.
\eeq
Concerning the remaining power $1/\kappa$, one has, thanks to the fact $\alpha\ge 2$, that
\beq \label{khat}
\kappa\ge \hat\kappa\eqdef 1+\lambda_0-\frac{a}{2}.
\eeq 
Therefore,
\beq \label{Ba}
B_\alpha\le 
 \chi_*^{1/\hat\kappa}
 \alpha^{\mu_3} \bar E^{1/\hat\kappa} \left[(1+ |Q_T|)\Big(1+\frac1{T_2-T_1}\Big) \mathcal{M}_0\right]^{\mu_2},
\text{ where }\bar E =\max\{1,\hat E\}.
\eeq 

Fourthly, assume 
\beq \label{kkc}
\kappa\ge \kappa'>1.
\eeq 
By H\"older's inequality,
\beq\label{g4}
\begin{aligned}
\|\bar u\|_{L^{\kappa' \alpha}(U\times(T_2,T))}
&\le |Q_T|^{(\frac1{\kappa'}-\frac1{\kappa})\frac1{\alpha}}\|\bar u\|_{L^{\kappa\alpha}(U\times(T_2,T))}
\le (1+|Q_T|)^\frac1{\alpha}\|\bar u\|_{L^{\kappa\alpha}(U\times(T_2,T))}.
\end{aligned}
\eeq

Combining \eqref{g4} with \eqref{g2}, and making use of estimates \eqref{g5}, \eqref{Aa}, \eqref{Ba} yield 
 \beq\label{g3}
 \|\bar u\|_{L^{\kappa' \alpha}(U\times(T_2,T))}\le \left\{ \bar A \bar B \alpha^{\mu_3}
 \left( \|\bar u\|_{L^{\tilde\kappa \alpha}(U\times(T_1,T))}^{\nu_3}+\|\bar u\|_{L^{\tilde\kappa \alpha}(U\times(T_1,T))}^{\nu_4} \right)\right\}^\frac1\alpha,
 \eeq
 where
\beq\label{Abound}
 \bar A=  2^{4+\frac{1}{r_*}}\bar c_1  c_{16}^2,
 \quad \bar B=\chi_*^{1/\hat\kappa} \bar E^{1/\hat\kappa}(1+ |Q_T|)^{1+\mu_2}\Big(1+\frac1{T_2-T_1}\Big)^{\mu_2} \mathcal{M}_0^{\mu_2}.
\eeq

Obviously, \eqref{g3} is only useful when $\kappa'>\tilde\kappa$, which will be satisfied in Theorem \ref{maxest} below.

Next, we recall a lemma on numeric sequences that will be used in our version of  Moser's iteration.

\begin{lemma}[\cite{CHK1}, Lemma A.2]\label{Genn}
Let $y_j\ge 0$, $\kappa_j>0$, $s_j \ge r_j>0$ and $\omega_j\ge 1$  for all $j\ge 0$.
Suppose there is $A\ge 1$ such that
\beq\label{yineq}
y_{j+1}\le A^\frac{\omega_j}{\kappa_j} (y_j^{r_j}+y_j^{s_j})^{\frac 1{\kappa_j}}\quad\forall j\ge 0.
\eeq
Denote $\beta_j=r_j/\kappa_j$ and $\gamma_j=s_j/\kappa_j$.
Assume
\beqs
\bar\alpha \eqdef \sum_{j=0}^{\infty}  \frac{\omega_j}{\kappa_j}<\infty\text{ and the products }
\prod_{j=0}^\infty \beta_j, 
\prod_{j=0}^\infty \gamma_j\text{ converge to positive numbers } \bar\beta,\bar \gamma, \text{ resp.}
\eeqs
Then
\beqs
y_j\le (2A)^{G_{j} \bar \alpha} \max\Big\{  y_0^{\gamma_0\ldots\gamma_{j-1}},y_0^{\beta_0\ldots\beta_{j-1}} \Big\}\quad \forall j\ge 1,
\eeqs
where $G_j=\max\{1, \gamma_{m}\gamma_{m+1}\ldots\gamma_{n}:1\le m\le n< j\}$.
Consequently,
\beq\label{doub2}
\limsup_{j\to\infty} y_j\le (2A)^{G \bar\alpha} \max\{y_0^{\bar \gamma},y_0^{\bar \beta}\},
\quad\text{where  }G=\limsup_{j\to\infty} G_j.
\eeq
\end{lemma}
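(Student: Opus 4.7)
My plan is to clean up the recursion, iterate it in the obvious way, and then carefully translate the resulting branch-dependent exponents into the explicit $\beta$- and $\gamma$-products that appear in the statement. I first note that $y_j^{r_j}+y_j^{s_j}\le 2\max\{y_j^{r_j},y_j^{s_j}\}$, so taking the $1/\kappa_j$-th power,
\[
(y_j^{r_j}+y_j^{s_j})^{1/\kappa_j}\le 2^{1/\kappa_j}\max\{y_j^{\beta_j},y_j^{\gamma_j}\}.
\]
Since $\omega_j\ge 1$ and $A\ge 1$, the factor $2^{1/\kappa_j}A^{\omega_j/\kappa_j}$ is bounded by $(2A)^{\omega_j/\kappa_j}\eqdef B_j$, and the hypothesis takes the cleaner form $y_{j+1}\le B_j\max\{y_j^{\beta_j},y_j^{\gamma_j}\}$.

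Next I would define $\pi_j\in\{\beta_j,\gamma_j\}$ as the exponent achieving the max at step $j$, i.e.\ $\pi_j=\gamma_j$ when $y_j\ge 1$ and $\pi_j=\beta_j$ when $y_j<1$, so the recursion collapses to $y_{j+1}\le B_j y_j^{\pi_j}$. A direct induction on $j\ge 1$ then gives
\[
y_j\le y_0^{\,\pi_0\pi_1\cdots\pi_{j-1}}\prod_{i=0}^{j-1}B_i^{\pi_{i+1}\cdots\pi_{j-1}},
\]
with the empty product of $\pi$'s understood as $1$. To convert this into the stated form, I would use $0<\beta_k\le\gamma_k$ and $B_i\ge 1$: each exponent $\pi_{i+1}\cdots\pi_{j-1}$ is then bounded above by $\gamma_{i+1}\cdots\gamma_{j-1}\le G_j$ (the $1$ in the definition of $G_j$ handling the empty-product case $i=j-1$), so the $B$-product collapses into $(2A)^{G_j\sum_{i=0}^{j-1}\omega_i/\kappa_i}\le(2A)^{G_j\bar\alpha}$. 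For the $y_0$ factor I split cases: when $y_0\ge 1$, $\pi_0\cdots\pi_{j-1}\le\gamma_0\cdots\gamma_{j-1}$ yields $y_0^{\pi_0\cdots\pi_{j-1}}\le y_0^{\gamma_0\cdots\gamma_{j-1}}$, and when $y_0<1$ the reverse inequality on exponents delivers $y_0^{\pi_0\cdots\pi_{j-1}}\le y_0^{\beta_0\cdots\beta_{j-1}}$; in either case the factor is at most $\max\{y_0^{\gamma_0\cdots\gamma_{j-1}},y_0^{\beta_0\cdots\beta_{j-1}}\}$, which gives the main inequality. The consequence \eqref{doub2} then follows by letting $j\to\infty$ and invoking the convergence of the partial products to $\bar\beta,\bar\gamma$ and of $G_j$ to $G$.

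I expect the only slightly delicate point to be the branch-selection argument for $\pi_j$: everything rides on the two monotonicities $\beta_k\le\gamma_k$ and $B_i\ge 1$, together with the fact that $G_j$ is defined as a maximum (including the $1$) so that it uniformly dominates every $\pi$-product that can arise, regardless of which branch the iteration chooses at each step. Once this bookkeeping is in place, the rest is routine.
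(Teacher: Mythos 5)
Your argument is correct. The key move---introducing $\pi_j\in\{\beta_j,\gamma_j\}$ to linearize the max in the recursion, then bounding the resulting branch-dependent exponents above by $\gamma$-products (for the $B_i$-factors, since $B_i\ge 1$) and handling the $y_0$-factor by splitting on $y_0\gtrless 1$---is exactly the right bookkeeping for this type of two-branch Moser iteration, and all the inequalities you invoke ($\pi_k\le\gamma_k$, $\pi_k\ge\beta_k$, $2^{1/\kappa_j}A^{\omega_j/\kappa_j}\le(2A)^{\omega_j/\kappa_j}$ via $\omega_j\ge 1$, $\gamma_{i+1}\cdots\gamma_{j-1}\le G_j$ including the empty case) check out. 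Note that the present paper cites this lemma from \cite{CHK1} without reproducing the proof, so there is no in-text argument to compare against; your proof is the natural one, and the only point you should be sure to make explicit when writing it up is that $G_j$ is non-decreasing (the defining set grows with $j$) and finite (convergence of $\prod\gamma_j$ to a positive limit forces the partial products $P_k=\gamma_0\cdots\gamma_k$ to be bounded above and eventually bounded away from $0$, so ratios $P_n/P_{m-1}$ are bounded), which is what lets you pass to the limit in the final step.
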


Some conditions on involved parameters will be imposed and are summarized here.

\begin{assumption}\label{asmp44}
Let number $r_*$ satisfy \eqref{rcond} and set $\lambda_0= (r_*(5-a)-3)/(3r_*)$.
Fix a number $\tilde\kappa\in(1,\sqrt{1+\lambda_0})$ and let $p_i>1$, $q_i>1$, for $i=1,2,3,4$, satisfy \eqref{kp} and \eqref{piqi}.
\end{assumption}

We obtain the first estimate for the essential supremum of $\bar u(x,t)$.

\begin{theorem}\label{maxest}
Under Assumption \ref{asmp44}, let $\alpha_0$ be a positive number such that 
 \beq\label{xal}
 \alpha_0 \ge \max\left\{ \frac{2p_3(1-a)}{\tilde\kappa-p_3},\frac{4(1-a)}{\tilde\kappa-1},\frac{a}{1+\lambda_0-\tilde\kappa^2} \right\}\text{ and } \alpha_0>\max\left\{2,\frac{a}{\lambda_0}\right\}.
 \eeq 

There are positive constants $\tilde\mu<\tilde\nu$ and $\omega$, which can be identified by \eqref{mnutil} and  \eqref{oG} below,  such that if $T>0$ and $\sigma\in(0,1)$ then
\beq\label{maxineq}
\begin{aligned}
 \|\bar u\|_{L^\infty(U\times(\sigma T,T))}
 &\le \left[ \bar c_2 \alpha_0^{\mu_3}\chi_*^{1/\hat\kappa} \Big(1+\frac1{\sigma T}\Big)^{\mu_2}(1+|Q_T|)^{\mu_3} \mathcal M_1^{\mu_2}
E_*^{1/\hat\kappa}\right]^\omega \\
& \quad\times \max\left\{  \|\bar u\|_{L^{\tilde \kappa\alpha_0}(U\times(0,T))}^{\tilde\mu} , \|\bar u\|_{L^{\tilde \kappa\alpha_0}(U\times(0,T))}^{\tilde\nu}\right\},
\end{aligned}
\eeq
 where  $\bar c_2=(2\tilde\kappa)^{\mu_3}\bar{A}$, numbers 
$\mu_2$, $\mu_3$, $\hat\kappa$, $\bar A$ are respectively given in \eqref{mm2}, \eqref{mm3}, \eqref{khat}, \eqref{Abound},
 $\mathcal M_1=\mathcal{M}_{0,T}(q_1,q_2,q_3,q_4)$ and 
 \beq\label{Estar}
E_*=\max\left\{ 1, \esssup_{t\in(\sigma T/2,T)}\|1+|\Psi(\cdot,t)|\|_{L^\frac{2 r_*}{1-r_*}}^2  + \esssup_{t\in(\sigma T/2,T)}\|\bar{u}(\cdot,t)\|_{L^\frac{2 r_*}{1-r_*}}^2\right\}.
 \eeq
\end{theorem}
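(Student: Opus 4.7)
The plan is to iterate inequality \eqref{g3} along a geometric sequence of Lebesgue exponents with a nested sequence of shrinking time cylinders, and then invoke the abstract iteration Lemma \ref{Genn} to extract an $L^\infty$-bound in the limit. Concretely, I would set $\alpha_j=\tilde\kappa^j\alpha_0$ and $T_j=\sigma T(1-2^{-j})$ for $j\ge 0$, so that $T_0=0$, $T_j\nearrow\sigma T$, and $T_{j+1}-T_j=\sigma T/2^{j+1}$. The last condition in \eqref{xal} guarantees $1+\lambda_0-a/\alpha_j\ge\tilde\kappa^2$ for every $j\ge 0$, so one may take $\kappa'=\tilde\kappa^2$ in Lemma \ref{iter1}; the remaining hypotheses \eqref{alph} and \eqref{kkc} persist at every $\alpha_j\ge\alpha_0$ by monotonicity in $\alpha$. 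Applying \eqref{g3} at $(\alpha,T_1,T_2,\kappa')=(\alpha_j,T_j,T_{j+1},\tilde\kappa^2)$ and using $\tilde\kappa^2\alpha_j=\tilde\kappa\alpha_{j+1}$, I obtain, for $y_j\eqdef\|\bar u\|_{L^{\tilde\kappa\alpha_j}(U\times(T_j,T))}$, the recurrence
\begin{equation*}
y_{j+1}\le\bigl\{\bar A\,\bar B_j\,\alpha_j^{\mu_3}\bigl(y_j^{\nu_{3,\alpha_j}}+y_j^{\nu_{4,\alpha_j}}\bigr)\bigr\}^{1/\alpha_j},
\end{equation*}
where $\bar B_j$ is $\bar B$ from \eqref{Abound} evaluated on the $j$-th cylinder, and $\nu_{3,\alpha_j},\nu_{4,\alpha_j}$ are as in \eqref{nns}.

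Next I would extract uniform control of the $j$-dependent data factors. Because $T_{j+1}\ge T_1=\sigma T/2$ for every $j\ge 0$, the quantities $\mathcal M_0=\mathcal M_{T_j,T}(q_1,\ldots,q_4)$ and $\bar E$ occurring in $\bar B_j$ are dominated by $\mathcal M_1$ and by $E_*$ from \eqref{Estar}, respectively. Using $(1+1/(T_{j+1}-T_j))^{\mu_2}\le 2^{(j+1)\mu_2}(1+1/(\sigma T))^{\mu_2}$, the equality $\mu_3=1+\mu_2$ (which collapses the $(1+|Q_T|)^{1+\mu_2}$ in $\bar B_j$ to $(1+|Q_T|)^{\mu_3}$), and $\alpha_j^{\mu_3}=\tilde\kappa^{j\mu_3}\alpha_0^{\mu_3}$, all $j$-independent factors gather into
\begin{equation*}
A\eqdef(2\tilde\kappa)^{\mu_3}\bar A\cdot\alpha_0^{\mu_3}\chi_*^{1/\hat\kappa}\bigl(1+\tfrac{1}{\sigma T}\bigr)^{\mu_2}(1+|Q_T|)^{\mu_3}\mathcal M_1^{\mu_2}E_*^{1/\hat\kappa},
\end{equation*}
which is precisely the bracketed quantity in \eqref{maxineq}. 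Since every factor in $A$ is at least $1$ (in particular $\bar A\ge 1$), we have $A\ge 1$, and a short calculation using $\mu_2\le\mu_3$ and $\tilde\kappa>1$ yields $\bar A\bar B_j\alpha_j^{\mu_3}\le A^{j+1}$. Hence the recurrence has the form required by Lemma \ref{Genn} with $\omega_j=j+1$.

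Applying Lemma \ref{Genn} with $\kappa_j=\alpha_j$, $r_j=\nu_{3,\alpha_j}=\alpha_j-2$, and $s_j=\nu_{4,\alpha_j}$, we see that $\beta_j=1-2/\alpha_j\nearrow 1$ and $\gamma_j=(1+4(1-a)/\alpha_j)(1+a/(\alpha_j-a))\searrow 1$. Because $\alpha_j$ grows geometrically, the series $\bar\alpha=\sum_{j\ge 0}(j+1)/\alpha_j$ converges and the infinite products $\bar\beta=\prod\beta_j\in(0,1)$, $\bar\gamma=\prod\gamma_j\in(1,\infty)$, together with $G=\limsup_jG_j<\infty$, all exist. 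The conclusion \eqref{doub2} then yields $\limsup_jy_j\le(2A)^{G\bar\alpha}\max\{y_0^{\bar\beta},y_0^{\bar\gamma}\}$. Since $T_j\nearrow\sigma T$ and $\tilde\kappa\alpha_j\to\infty$, a standard monotone argument gives $\|\bar u\|_{L^\infty(U\times(\sigma T,T))}\le\limsup_jy_j$; identifying $y_0=\|\bar u\|_{L^{\tilde\kappa\alpha_0}(U\times(0,T))}$, $\omega\eqdef G\bar\alpha$, $\tilde\mu\eqdef\bar\beta$, $\tilde\nu\eqdef\bar\gamma$, and absorbing the factor $2^{G\bar\alpha}$ into $\bar c_2$, yields \eqref{maxineq}.

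The substantive analytic work is already contained in Lemmas \ref{GLk} and \ref{iter1}; the remaining obstacle is purely a matter of bookkeeping — verifying that the linearly-growing exponent $\omega_j=j+1$ absorbs the geometric growth $2^{j\mu_2}\tilde\kappa^{j\mu_3}$ of the per-step coefficient while $\sum\omega_j/\alpha_j$ still converges (which works precisely because $\alpha_j$ also grows geometrically), and confirming that the assembled constant $A$ matches the bracketed expression in \eqref{maxineq}. The key algebraic coincidence is $\mu_3=1+\mu_2$, which consolidates the power of $(1+|Q_T|)$ from $\bar B_j$ with the $\mu_3$-exponent displayed in the statement.
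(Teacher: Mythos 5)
Your argument follows the paper's proof step for step: same exponent sequence $\alpha_j=\tilde\kappa^j\alpha_0$, same nested time cutoffs $T_j=\sigma T(1-2^{-j})$, same use of \eqref{g3} with $\kappa'=\tilde\kappa^2$ (justified by \eqref{xal}) to produce the recurrence for $y_j=\|\bar u\|_{L^{\tilde\kappa\alpha_j}(U\times(T_j,T))}$, and same closure via Lemma~\ref{Genn}, with the convergence of $\sum(j+1)/\alpha_j$, $\prod\beta_j$, $\prod\gamma_j$ and the finiteness of $G$ checked identically.

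The one genuine slip is in the choice of the iteration constant $A$. You set $A$ equal to the bracketed quantity in \eqref{maxineq}, whose prefactor is $(2\tilde\kappa)^{\mu_3}=2^{\mu_3}\tilde\kappa^{\mu_3}$, and verify $\bar A\bar B_j\alpha_j^{\mu_3}\le A^{j+1}$ by crudely bounding $2^{(j+1)\mu_2}\le 2^{(j+1)\mu_3}$. But \eqref{doub2} returns $(2A)^{G\bar\alpha}$, not $A^{G\bar\alpha}$, so your final estimate carries an extraneous $2^{\omega}$ that cannot be ``absorbed into $\bar c_2$'': $\bar c_2=(2\tilde\kappa)^{\mu_3}\bar A$ is already fixed by the statement. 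The remedy is to keep the tight power $2^{\mu_2}$ (the one actually produced by $(1+1/(T_{j+1}-T_j))^{\mu_2}\le 2^{(j+1)\mu_2}(1+1/(\sigma T))^{\mu_2}$) and take $A_{T,\sigma,\alpha_0}=2^{\mu_2}\tilde\kappa^{\mu_3}\alpha_0^{\mu_3}\bar A M_3(1+1/(\sigma T))^{\mu_2}$; the inequality $\bar A\bar B_j\alpha_j^{\mu_3}\le A_{T,\sigma,\alpha_0}^{j+1}$ still holds, and then Lemma~\ref{Genn}'s factor of $2$ upgrades $2^{\mu_2}$ to $2^{1+\mu_2}=2^{\mu_3}$ using the identity $\mu_3=1+\mu_2$, so that $2A_{T,\sigma,\alpha_0}$ equals the bracketed quantity exactly. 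With this one replacement ($A\mapsto A/2$), your proof reproduces \eqref{maxineq} with the stated constant.
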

\begin{proof}
We prove \eqref{maxineq} by adapting Moser's iteration. We iterate inequality \eqref{g3} with suitable parameters. 
For our convenience, re-denote $\kappa$ defined in \eqref{newkappa} by $\kappa_\alpha$. Then $\kappa_\alpha$ is  increasing in $\alpha$.

Set $\beta_j=\tilde \kappa^j\alpha_0$  for $j\ge 0$.
Since $\tilde \kappa>1$, the sequence $(\beta_j)_{j=0}^\infty$ is  increasing. In particular, 
\beq\label{bej2}
\beta_j\ge \beta_0=\alpha_0\text{ for all }j\ge 0.
\eeq 
This relation and \eqref{xal} imply that $\alpha=\beta_j$ satisfies condition \eqref{alph}, and 
\beq\label{kak}
\kappa_{\beta_j}\ge \kappa_{\alpha_0}= 1+\lambda_0  -\frac{a}{\alpha_0}\ge \tilde\kappa^2.
\eeq

Set $\kappa'=\tilde\kappa^2>1$. Then property \eqref{kak} implies that \eqref{kkc} holds for $\alpha=\beta_j$.

For $j\ge 0$, let $t_j=\sigma T(1-\frac 1{2^{j}})$. Then $t_0=0$, $t_1=\sigma T/2$, $t_j$ is strictly increasing, and $t_j\to \sigma T$ as $t\to\infty$.
 
For $j\ge 0$, applying inequality \eqref{g3} to $\alpha=\beta_{j}$, $T_1=t_{j}$ and $T_2=t_{j+1}$, we have 
\begin{align}
\| \bar u\|_{L^{\kappa'\beta_{j}}(U\times (t_{j+1},T))}
&\le  (\bar{A}\bar{B}_j\beta_j^{\mu_3})^{\frac1{\beta_j}}\Big( \| \bar u\|_{L^{\beta_{j+1}}(U\times(t_j,T))}^{\tilde r_j}+\| \bar u\|_{L^{\beta_{j+1}}(U\times(t_j,T))}^{\tilde s_j}\Big)^\frac 1{\beta_{j}},\label{ukbb}
\end{align}
where
$\tilde r_j=\nu_{3,\beta_{j}}$, $\tilde s_j=\nu_{4,\beta_{j}}$, see \eqref{nns}, number $\bar A$ is given in \eqref{Abound}, and 
\beqs
\bar B_j=\chi_*^{1/\hat\kappa} \bar E_j^{1/\hat\kappa} (1+ |Q_T|)^{1+\mu_2}\Big(1+\frac1{t_{j+1}-t_j}\Big)^{\mu_2} \mathcal{M}_{t_j,T}(q_1,q_2,q_3,q_4)^{\mu_2}
\eeqs 
with
$\bar  E_j=\max\left\{ 1, \esssup_{t\in(t_{j+1},T)}\|1+|\Psi(\cdot,t)|\|_{L^\frac{2 r_*}{1-r_*}}^2  + \esssup_{t\in(t_{j+1},T)}\|\bar{u}(\cdot,t)\|_{L^\frac{2 r_*}{1-r_*}}^2\right\}$.

Note from \eqref{mm2} and \eqref{mm3} that $1+\mu_2=\mu_3$. Clearly, $\mathcal{M}_{t_j,T}(q_1,q_2,q_3,q_4)\le \mathcal M_1$, and 
comparing $\bar E_j$ with $E_*$ in \eqref{Estar} gives $\bar E_j\le E_*$.
Let 
\beq\label{Mone}
M_3=\chi_*^{1/\hat\kappa} E_*^{1/\hat\kappa} (1+ |Q_T|)^{\mu_3} \mathcal M_1^{\mu_2} .
\eeq 
Then one can estimate
\begin{align*}
    \bar{A}\bar B_j \beta_j^{\mu_3}
    &\le \bar A M_3\Big(1+\frac1{t_{j+1}-t_j}\Big)^{\mu_2}  \beta_j^{\mu_3}
    =\bar A M_3\Big(1+\frac{2^{j+1}}{\sigma T}\Big)^{\mu_2} (\tilde{\kappa}^j\alpha_0)^{\mu_3}\\
    &\le \bar A M_3 2^{\mu_2(j+1)}\Big(1+\frac{1}{\sigma T}\Big)^{\mu_2} (\tilde{\kappa}^j\alpha_0)^{\mu_3}.
\end{align*}
This yields
\beq\label{ABjbj}
    \bar{A}\bar B_j \beta_j^{\mu_3}
    \le A_{T,\sigma,\alpha_0}^{j+1} \text{ for all }j\ge 0,
\eeq
where 
\beq\label{AAA}
A_{T,\sigma,\alpha_0}=2^{\mu_2}\tilde{\kappa}^{\mu_3} \alpha_0^{\mu_3}\bar A M_3\Big(1+\frac{1}{\sigma T}\Big)^{\mu_2} >1.
\eeq

For $j\ge 0$, define $Y_j=\| \bar u\|_{L^{\beta_{j+1}}(U\times (t_j,T))}.$
Note that $\kappa'\beta_j=\Tilde{\kappa}^2\beta_j=\beta_{j+2}$. Then we have,  by \eqref{ukbb} and \eqref{ABjbj},  
\beq\label{YY}
Y_{j+1}\le A_{T,\sigma,\alpha_0}^{\frac{j+1}{\beta_j}}\big(Y_j^{\tilde{r}_j}+Y_j^{\tilde{s}_j}\big)^{\frac 1{\beta_j}}.
\eeq
Hence, we obtain inequality \eqref{yineq} in Lemma \ref{Genn} for the sequence $(y_j)_{j=0}^\infty=(Y_j)_{j=0}^\infty$.

We check other conditions in Lemma \ref{Genn}.
Because $\tilde\kappa>1$, we have
\beq\label{osum}
\sum_{j=0}^\infty \frac {j+1} { \beta_j}= \frac 1{ \alpha_0}\sum_{j=0}^\infty \frac {j+1} {\tilde \kappa^j}
= \frac{\tilde \kappa^2}{ \alpha_0(\tilde \kappa-1)^2}
\eqdef \ell_1  \in (0,\infty).
\eeq

Using the definitions in \eqref{nns} and the fact $\beta_j\ge \alpha_0>2$, see \eqref{bej2} and \eqref{xal}, we have
\beqs
\frac{\tilde r_j}{\beta_j}=\frac{\beta_j-2}{\beta_j}\in (0,1) \text{ and  }
\frac{\tilde s_j}{\beta_j}
= \left(1+\frac{4(1-a)}{\beta_j}\right) \left(1+\frac{a}{\beta_j-a}\right)\in(1,\infty).
\eeqs
Then
\beqs
0<-\sum_{j=0}^\infty\ln{\frac{\tilde r_j}{\beta_j}}=\sum_{j=0}^\infty\ln{\frac{\beta_j}{\tilde r_j}}
=\sum_{j=0}^\infty\ln\left(1+\frac{2}{\tilde\kappa^j\alpha_0-2}\right)
\le\sum_{j=0}^\infty\frac{2}{\tilde\kappa^j\alpha_0-2}<\infty,
\eeqs
\begin{align*}
0<\sum_{j=0}^\infty\ln{\frac{\tilde s_j}{\beta_j}}
&=\sum_{j=0}^\infty\ln\left(1+\frac{4(1-a)}{\tilde\kappa^j\alpha_0}\right)+\sum_{j=0}^\infty\ln \left(1+\frac{a}{\tilde\kappa^j\alpha_0-a}\right)\\
&\le\sum_{j=0}^\infty\frac{4(1-a)}{\tilde\kappa^j\alpha_0}+\sum_{j=0}^\infty\frac{a}{\tilde\kappa^j\alpha_0-a}
<\infty.
\end{align*}
Therefore, $\sum_{j=0}^\infty\ln(\tilde r_j/\beta_j)=\ell_2\in\R$ and $\sum_{j=0}^\infty\ln(\tilde s_j/\beta_j)=\ell_3\in\R$. Consequently, 
\beq \label{mnutil}
\tilde \mu \eqdef   \prod_{j=0}^\infty \frac{\tilde r_j}{\beta_j}=e^{\ell_2}\text{ and } \tilde \nu \eqdef \prod_{j=0}^\infty \frac{\tilde s_j}{\beta_j}=e^{\ell_3}
\text{ are  positive numbers. }
\eeq 

By \eqref{YY}, \eqref{osum} and \eqref{mnutil}, we can apply Lemma \ref{Genn} to the sequence $(Y_j)_{j=0}^\infty$, and obtain from \eqref{doub2} that
\beq\label{limY}
\limsup_{j\to\infty} Y_{j}\le (2A_{T,\sigma,\alpha_0})^\omega\max\{Y_0^{\tilde\mu}, Y_0^{\tilde\nu}\},
\eeq
where 
\beq\label{oG} \omega=\ell_4 \ell_1 \text{ with } \ell_4=\limsup_{j\to\infty}\left(\max\left\{1, \frac{\tilde s_m}{\beta_m}\cdot \frac{\tilde s_{m+1}}{\beta_{m+1}}\cdots\frac{\tilde s_{m'}}{\beta_{m'}}:1\le m\le m'< j\right\}\right).
 \eeq 
In fact, we have, thanks to the property $\tilde s_j/\beta_j>1$,  that
$$\ell_4=\prod_{k=1}^\infty \frac{\tilde s_k}{\beta_k}=\frac{\tilde\nu \beta_0}{\tilde s_0}\in(0,\infty).$$

Note that $\limsup_{j\to\infty} Y_{j}=\|\bar u\|_{L^\infty(U\times(\sigma T,T))}$, $Y_0=\|\bar  u\|_{L^{\beta_1}(U\times(0,T))}$ and, by \eqref{AAA} and \eqref{Mone},
\beq\label{Aom}
\begin{aligned}
2A_{T,\sigma,\varphi}&=2^{1+\mu_2}\tilde\kappa^{\mu_3}\alpha_0^{\mu_3}\bar{A}(1+ |Q_T|)^{\mu_3}\Big(1+\frac{1}{\sigma T}\Big)^{\mu_2} \chi_*^{1/\hat\kappa} E_*^{1/\hat\kappa} \mathcal M_1^{\mu_2} \\
&=
\bar c_2 \alpha_0^{\mu_3}(1+|Q_T|)^{\mu_3}\Big(1+\frac1{\sigma T}\Big)^{\mu_2}
\chi_*^{1/\hat\kappa}E_*^{1/\hat\kappa} \mathcal M_1^{\mu_2}.
\end{aligned}
\eeq 
Then estimate \eqref{maxineq} follows \eqref{limY} and \eqref{Aom}.
\end{proof}

Combining Theorem \ref{maxest} with the $L^\alpha$-estimate in Theorem \ref{aprio1}, we have the following $L^\infty$-estimate in terms of initial and boundary data, at least for small time.

\begin{theorem}\label{thm45}
Under Assumption \ref{asmp44}, let $\alpha_0$ be a positive number such that 
\beq\label{xa2}
  \alpha_0 \ge\max\left\{ \frac{2p_3(1-a)}{\tilde\kappa-p_3}, \frac{4(1-a)}{\tilde\kappa -1},\frac{a}{1+\lambda_0-\tilde\kappa^2}, \frac{2r_*}{\tilde\kappa(1-r_*)} \right\} 
  \text{ and }
  \alpha_0 >\max\left\{2,\frac{a}{\lambda_0}, \frac{4-3a}{\lambda_0\tilde\kappa} \right\}.
 \eeq 
 
Let $\omega$ and $\tilde \nu$ be the same constants as in Theorem \ref{maxest}.
Denote
\beq \label{Com}
\begin{aligned}
&\beta_1=\tilde \kappa\alpha_0,\
\omega_1=\omega/\hat\kappa,\  
\omega_2=\mu_2\omega,\ 
\omega_3=\mu_3\omega,\\
&\omega_4=\omega_3+\frac{\tilde\nu}{\beta_1} 
\text{ and } \omega_5=\omega_3+\frac{\omega_1(1-r_*)}{r_*}.
\end{aligned}
\eeq 

Let $\mathcal{M}_2(t)=\mathcal{M}_{0,t}(q_1,q_2,q_3,q_4)$ be defined as in \eqref{Mzero}, and $\gamma,C_*,V_0,\mathcal E(\cdot)$ be defined as in Theorem \ref{aprio1} for  $\alpha=\beta_1$.

Suppose  $T>0$ satisfies \eqref{Bdef} for $\alpha=\beta_1$ and some number $B\in(0,1)$. 
For $t\in[0,T]$, let 
\beq\label{VB}
\mathcal V(t)= V_0\left( 1  - \gamma C_*V_0^\gamma \int_0^t \mathcal E(\tau) d\tau  \right)^{-1/\gamma}
\text{ and }
 {\mathscr B}_\sigma(t)=1+\esssup_{\tau\in(\sigma t/2,t)}\|\Psi(\cdot,\tau)\|_{L^\frac{2 r_*}{1-r_*}}.
\eeq

Then one has, for any $t\in(0,T]$ and $\sigma\in(0,1)$, that 
\beq\label{Lb1}
\|\bar u\|_{L^{\infty}(U\times(\sigma t,t))}\le \bar C_1\chi_*^{\omega_1}
(1+\sigma^{-1}t^{-1})^{\omega_2}(1+t)^{\omega_4} 
\mathcal M_2(t)^{\omega_2} {\mathscr B}_\sigma(t)^{2\omega_1}\mathcal V(t)^\frac{2\omega_1+\tilde\nu}{\beta_1},
\eeq
where 
$\bar C_1=2^{\omega_1}\bar c_2^\omega \alpha_0^{\omega_3}(1+|U|)^{\omega_5}$.
\end{theorem}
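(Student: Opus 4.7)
The plan is to combine Theorem~\ref{maxest} (applied with $T=t$) with the $L^{\beta_1}$-estimate from Theorem~\ref{aprio1} (applied with exponent $\alpha=\beta_1=\tilde\kappa\alpha_0$). First I would verify that condition \eqref{xa2} yields both the hypothesis \eqref{xal} of Theorem~\ref{maxest} and the hypothesis \eqref{alph1} of Theorem~\ref{aprio1} at $\alpha=\beta_1$: the bound $\alpha_0\ge 2r_*/(\tilde\kappa(1-r_*))$ multiplies up to $\beta_1\ge 2r_*/(1-r_*)$, while $\alpha_0>(4-3a)/(\lambda_0\tilde\kappa)$ gives $\beta_1>(4-3a)/\lambda_0=3r_*(4-3a)/(r_*(5-a)-3)$, matching \eqref{alph1}; the remaining parts of \eqref{xa2} are exactly \eqref{xal}.

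Next, I would extract two ingredient estimates from Theorem~\ref{aprio1}. Since hypothesis \eqref{Bdef} holds at $\alpha=\beta_1$, \eqref{uest} together with the monotonicity of $\mathcal V$ (which is increasing because $\mathcal E\ge 0$) gives $V(\tau)\le \mathcal V(t)$ for all $\tau\in[0,t]$, so
\[
\|\bar u\|_{L^{\beta_1}(U\times(0,t))}^{\beta_1}=\int_0^t\int_U |\bar u|^{\beta_1}\,dx\,d\tau\le \int_0^t V(\tau)\,d\tau \le t\,\mathcal V(t).
\]
Setting $Y\eqdef\|\bar u\|_{L^{\beta_1}(U\times(0,t))}$, this gives $Y\le (1+t)^{1/\beta_1}\mathcal V(t)^{1/\beta_1}$. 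Since $\beta_1\ge 2r_*/(1-r_*)$, H\"older's inequality yields, for each $\tau\in(\sigma t/2,t)$,
\[
\|\bar u(\cdot,\tau)\|_{L^{2r_*/(1-r_*)}}\le (1+|U|)^{(1-r_*)/(2r_*)}\mathcal V(t)^{1/\beta_1},
\]
and an analogous simpler bound controls $\|1+|\Psi(\cdot,\tau)|\|_{L^{2r_*/(1-r_*)}}$ by $c\,(1+|U|)^{(1-r_*)/(2r_*)}\mathscr B_\sigma(t)$. Because both $\mathcal V(t)\ge V_0\ge 1$ and $\mathscr B_\sigma(t)\ge 1$, squaring and summing the two contributions in \eqref{Estar} collapses (via $x+y\le 2xy$ for $x,y\ge 1$) to
\[
E_*\le c\,(1+|U|)^{(1-r_*)/r_*}\mathscr B_\sigma(t)^2\mathcal V(t)^{2/\beta_1}
\]
for an absolute constant $c$.

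Finally, I would substitute these bounds into \eqref{maxineq} with $T=t$, using also $(1+|Q_t|)^{\mu_3\omega}\le (1+|U|)^{\omega_3}(1+t)^{\omega_3}$. The term $\max\{Y^{\tilde\mu},Y^{\tilde\nu}\}$ is dominated by $(1+t)^{\tilde\nu/\beta_1}\mathcal V(t)^{\tilde\nu/\beta_1}$: if $Y\le 1$ the max is at most $1\le\mathcal V(t)^{\tilde\nu/\beta_1}$; if $Y\ge 1$ the max equals $Y^{\tilde\nu}$, for which the upper bound is direct. Collecting exponents then yields \eqref{Lb1}: $\chi_*$ picks up $\omega/\hat\kappa=\omega_1$; $\mathcal M_2(t)$ and $(1+\sigma^{-1}t^{-1})$ each pick up $\mu_2\omega=\omega_2$; $\alpha_0$ picks up $\mu_3\omega=\omega_3$; $\mathscr B_\sigma(t)$ picks up $2\omega_1$ from $E_*^{\omega_1}$; $\mathcal V(t)$ picks up $2\omega_1/\beta_1$ from $E_*^{\omega_1}$ plus $\tilde\nu/\beta_1$ from the max, summing to $(2\omega_1+\tilde\nu)/\beta_1$; $(1+|U|)$ picks up $\omega_3$ from $(1+|Q_t|)^{\omega_3}$ plus $\omega_1(1-r_*)/r_*$ from $E_*^{\omega_1}$, summing to $\omega_5$; and $(1+t)$ picks up $\omega_3+\tilde\nu/\beta_1=\omega_4$. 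All absolute constants are absorbed into $\bar C_1=2^{\omega_1}\bar c_2^\omega\alpha_0^{\omega_3}(1+|U|)^{\omega_5}$.

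The principal obstacle is not any single analytical step but the careful bookkeeping of exponents, because the iterated Moser constant $\omega$ multiplies several distinct base exponents $\mu_2,\mu_3,1/\hat\kappa$ and those products must be matched precisely to $\omega_1,\ldots,\omega_5$ in the stated conclusion. A subsidiary subtlety is the reduction of $\max\{Y^{\tilde\mu},Y^{\tilde\nu}\}$ to a single power of $\mathcal V(t)$: without the observation $\mathcal V(t)\ge 1$ one would be forced to retain two distinct powers of $\mathcal V(t)$ in the final estimate.
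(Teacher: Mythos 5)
Your proposal is correct and matches the paper's own proof step for step: verifying that \eqref{xa2} yields \eqref{xal} for $\alpha_0$ and \eqref{alph1} for $\alpha=\beta_1$, applying \eqref{maxineq} with $T=t$, bounding the $L^{\beta_1}$ norm via \eqref{u2} and the monotonicity of $\mathcal V$, collapsing $\max\{Y^{\tilde\mu},Y^{\tilde\nu}\}$ via $\mathcal V\ge 1$, estimating $E_*$ by H\"older and the triangle inequality together with $\mathscr B_\sigma,\mathcal V\ge 1$, and finally using $1+|Q_t|\le(1+|U|)(1+t)$. The exponent bookkeeping you describe agrees exactly with the paper's derivation of $\omega_1,\ldots,\omega_5$ and $\bar C_1$.
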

\begin{proof}
Thanks to condition \eqref{xa2},  $\alpha_0$ satisfies \eqref{xal}, and $\alpha=\beta_1$ satisfies  \eqref{alph1}. 
Let $\tilde \mu$ be the number as in Theorem \ref{maxest}.
Applying estimate \eqref{maxineq} to $T:=t$ and using definitions of constants in \eqref{Com}, we have 
\beq\label{Li2}
\begin{split}
\|\bar u\|_{L^{\infty}(U\times(\sigma t,t))}
&\le C_3\chi_*^{\omega_1}(1+\sigma^{-1}t^{-1})^{\omega_2}(1+|Q_t|)^{\omega_3} \mathcal M_2^{\omega_2}
E_*(t)^{\omega_1}  \\
&\quad\times \max\left\{  \|\bar u\|_{L^{\beta_1}(U\times(0,t))}^{\tilde\mu} , \|\bar u\|_{L^{\beta_1}(U\times(0,t))}^{\tilde\nu}\right\},
\end{split}
\eeq
where 
$C_3=(\bar c_2\alpha_0^{\mu_3})^{\omega}$ and
\beqs
 E_*(t)=\max\left\{ 1, \esssup_{\tau\in(\sigma t/2,t)}\|1+|\Psi(\cdot,\tau)|\|_{L^\frac{2 r_*}{1-r_*}}^2  + \esssup_{\tau\in(\sigma t/2,t)}\|\bar{u}(\cdot,\tau)\|_{L^\frac{2 r_*}{1-r_*}}^2\right\}.
\eeqs

Note that $\mathcal V(t)$ is increasing in $t\in[0,T]$.
By \eqref{u2}, we have, for all $\tau\in[0,t]$, 
\beq\label{Lb0}
 \int_U |\bar u(x,\tau)|^{\beta_1}dx  \le \mathcal V(\tau)\le \mathcal V(t).
\eeq 
Hence,
\beqs 
 \int_0^t\int_U |\bar u(x,\tau)|^{\beta_1}dxd\tau  \le t\mathcal V(t).
\eeqs 
Combining this estimate with the facts $\tilde\nu>\tilde \mu$ and $\mathcal V(t)\ge 1$ yields
\beq\label{Lb}
\begin{aligned}
\max\left\{  \|\bar u\|_{L^{\beta_1}(U\times(0,t))}^{\tilde\mu} , \|\bar u\|_{L^{\beta_1}(U\times(0,t))}^{\tilde\nu}\right\}
&\le \max\Big\{ (t\mathcal V(t))^{\frac{\tilde\mu}{\beta_1}},(t\mathcal V(t))^\frac{\tilde\nu}{\beta_1}\Big\}\\
&\le (1+t)^{\frac{\tilde\nu}{\beta_1}}\mathcal V(t)^\frac{\tilde\nu}{\beta_1}.
\end{aligned}
\eeq 

For $E_*(t)$, we, on the one hand, use the triangle inequality to estimate
\beqs 
\esssup_{\tau\in(\sigma t/2,t)} \|1+|\Psi(\cdot,\tau)|\|_{L^\frac{2 r_*}{1-r_*}}^2 
\le \esssup_{\tau\in(\sigma t/2,t)} \left(|U|^{\frac{1-r_*}{2r_*}} +\|\Psi(\cdot,\tau)\|_{L^\frac{2 r_*}{1-r_*}}\right)^2
 \le (1+|U|)^{\frac{1}{r_*}-1}{\mathscr B}_\sigma(t)^2.
\eeqs 
On the other hand, we use H\"older's inequality and \eqref{Lb0} to obtain
\beqs 
\esssup_{\tau\in(\sigma t/2,t)}\|\bar{u}(\cdot,\tau)\|_{L^\frac{2 r_*}{1-r_*}}^2 
\le |U|^{\frac{1}{r_*}-1-\frac{2}{\beta_1}} \esssup_{\tau\in(\sigma t/2,t)}\|\bar{u}(\cdot,\tau)\|_{L^{\beta_1}}^2
 \le (1+|U|)^{\frac{1}{r_*}-1}\mathcal V(t)^{2/\beta_1}.
\eeqs
Hence,
\beq\label{widetildeE}
 E_*(t)  \le 2(1+|U|)^{\frac{1}{r_*}-1}{\mathscr B}_\sigma(t)^2\mathcal V(t)^{2/\beta_1}.
\eeq

Combining \eqref{Li2}, \eqref{Lb} and \eqref{widetildeE} with the fact $1+|Q_t|\le (1+|U|)(1+t)$, we have 
\beqs
\begin{split}
\|\bar u\|_{L^{\infty}(U\times(\sigma t,t))}
&\le C_3\chi_*^{\omega_1}(1+\sigma^{-1}t^{-1})^{\omega_2}(1+t)^{\omega_3} (1+|U|)^{\omega_3}  \mathcal M_2(t)^{\omega_2}
\\
&\quad\times \left[2(1+|U|)^{\frac{1}{r_*}-1}{\mathscr B}_\sigma(t)^2 \mathcal V(t)^{2/\beta_1} \right]^{\omega_1} (1+t)^{\frac{\tilde\nu}{\beta_1}}\mathcal V(t)^\frac{\tilde\nu}{\beta_1}\\
&= 2^{\omega_1}C_3(1+|U|)^{\omega_5}\chi_*^{\omega_1}(1+\sigma^{-1}t^{-1})^{\omega_2}(1+t)^{\omega_4}  \mathcal M_2(t)^{\omega_2}
{\mathscr B}_\sigma(t)^{2\omega_1} \mathcal V(t)^\frac{2\omega_1+\tilde\nu}{\beta_1}.
\end{split}
\eeqs
Then inequality \eqref{Lb1} follows.
\end{proof}

\section{Maximum principle}\label{maxprin}

In this section, we estimate the classical solutions of \eqref{ibvpg} by the maximum principle.
Recall that the functions $X(z,y)$, $\mathcal Z(x,t)$ and $\Phi(x,t)$ are defined by \eqref{Xdef}, \eqref{Zx1} and \eqref{Phi00}, respectively.
We re-write equation \eqref{mainuX} in the non-divergence form as
\beq\label{unondiv}
 u_t=D_y X(u,\Phi):(D^2 u + u^2 \Omega^2 \mathbf J^2 + 2u  \nabla u \mathcal Z^{\rm T})+D_z X(u,\Phi)\cdot \nabla u.
\eeq

For $T>0$, denote $U_T=U\times (0,T]$, its closure $\overline{U_T}=\overline U\times[0,T]$ and its parabolic boundary  
$\partial_p U_T=\overline{U_T}\setminus U_T=U\times\{0\}\cup \Gamma\times[0,T]$.

\begin{theorem}\label{maxpr}
Assume $u\in C(\overline{U_T})\cap C_{x,t}^{2,1}(U_T)$, $u\ge 0$ on $\overline{U_T}$ and $u$ satisfies \eqref{mainuX} in $U_T$. Then one has
 \beq\label{max-u}
 \max_{\overline{U_T}} u =\max_{\partial_p U_T} u.
 \eeq
\end{theorem}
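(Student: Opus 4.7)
I will use the classical perturbation-based maximum principle for equation \eqref{unondiv}, leveraging the positive definiteness of the symmetric part of $D_yX$ from \eqref{hXh} together with the fact that $\mathbf{J}^2$ is negative semidefinite. This takes care of the extra ``rotation Hessian'' term $u^2\Omega^2\mathbf{J}^2$ that distinguishes \eqref{unondiv} from a standard quasilinear parabolic equation.

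\textbf{Step 1: Perturbation.} Assume, for contradiction, that $\max_{\overline{U_T}} u > \max_{\partial_p U_T} u$. Set $v(x,t) = u(x,t) - \varepsilon t$ with $\varepsilon>0$ small enough that $\max_{\overline{U_T}} v > \max_{\partial_p U_T} v$ still holds (take $\varepsilon < (\max_{\overline{U_T}}u-\max_{\partial_p U_T}u)/T$). Then $v$ attains its maximum at some point $(x_0,t_0)\in U_T = U\times(0,T]$. At $(x_0,t_0)$, since $v\in C^{2,1}$ and the maximum is interior in $x$, we have
$$\nabla u(x_0,t_0) = \nabla v(x_0,t_0)=0, \quad D^2 u(x_0,t_0) = D^2 v(x_0,t_0)\le 0,$$
while $v_t(x_0,t_0)\ge 0$ (with equality when $t_0<T$), i.e., $u_t(x_0,t_0)\ge \varepsilon>0$.

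\textbf{Step 2: Evaluating the PDE at $(x_0,t_0)$.} Because $\nabla u=0$ there, we have $\Phi = u^2\mathcal{Z}$, the drift term $D_zX(u,\Phi)\cdot\nabla u$ vanishes, and the mixed matrix $2u\,\nabla u\,\mathcal{Z}^{\mathrm T}$ vanishes. Equation \eqref{unondiv} thus reduces at $(x_0,t_0)$ to
\[
 u_t = D_yX(u,u^2\mathcal{Z}):\bigl(D^2u + u^2\Omega^2\mathbf{J}^2\bigr).
\]
Since $\vec k$ is a unit vector, $\mathbf{J}^2 x=\vec k\times(\vec k\times x) = (\vec k\cdot x)\vec k - x$, so $\mathbf{J}^2=\vec k\,\vec k^{\mathrm T}-\mathbf I_3$ has eigenvalues $\{0,-1,-1\}$ and is symmetric negative semidefinite. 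Combined with $D^2 u\le 0$, the symmetric matrix $M := D^2 u + u^2\Omega^2\mathbf{J}^2$ is negative semidefinite.

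\textbf{Step 3: Getting the contradiction.} For the inner product $A:B$ with $B$ symmetric, only the symmetric part of $A$ contributes, since the antisymmetric part pairs to zero. By \eqref{hXh} applied at $z=u(x_0,t_0)\in\R_+$ and $y = u^2\mathcal Z(x_0,t_0)$, the symmetric part $S$ of $D_yX(u,u^2\mathcal{Z})$ satisfies $\xi^{\mathrm T}S\xi\ge c_8(\chi_0+R_*u)^{-2}(1+|y|)^{-a}|\xi|^2>0$, so $S$ is positive definite. Since $S$ is positive definite symmetric and $M$ is negative semidefinite symmetric, diagonalizing $M$ shows $S:M = \operatorname{tr}(SM)\le 0$. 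Hence $u_t(x_0,t_0)\le 0$, contradicting $u_t(x_0,t_0)\ge\varepsilon>0$. This proves \eqref{max-u}.

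\textbf{Main obstacle.} The only non-routine ingredient is recognizing that the seemingly problematic rotation contribution $u^2\Omega^2\mathbf{J}^2$ is actually a negative semidefinite matrix, not merely a bounded perturbation: this is precisely the algebraic content of $\mathbf{J}^2=\vec k\vec k^{\mathrm T}-\mathbf I_3$. Once that sign is in hand, the rest is the standard Hopf-style argument with the $v=u-\varepsilon t$ perturbation used to avoid having to demand strict concavity of $u$ at the maximum. Note also that the nonnegativity hypothesis $u\ge 0$ is needed so that \eqref{hXh} applies with $z=u\in\R_+$.
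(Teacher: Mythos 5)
Your proof is correct and follows essentially the same strategy as the paper: both perturb $u$ to upgrade the weak inequality at the maximum to a strict contradiction (you use $u-\varepsilon t$, the paper uses $e^{-\varepsilon t}u$, and in both cases the hypothesis $u\ge 0$ is what makes the contradiction/the application of \eqref{hXh} go through), and both rest on \eqref{hXh} together with the negative semidefiniteness of $\mathbf{J}^2$ to get $D_yX(u,\Phi):(D^2u+u^2\Omega^2\mathbf{J}^2)\le 0$ at the critical point. The only difference is that the paper cites \cite[Theorem~3.1]{CHK3} for that last inequality, whereas you carry out the argument in full (identifying $\mathbf{J}^2=\vec k\vec k^{\rm T}-\mathbf I_3$, reducing to the symmetric part of $D_yX$, and diagonalizing the negative semidefinite matrix), which is a welcome bit of self-containment.
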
 
\begin{proof} Given any $\varep>0$, let  $u^\varep(x,t)=e^{-\varep t}u(x,t)$ and 
$\displaystyle M_\varep=\max_{\overline{U_T}} u^\varep$.
We claim that
 \beq\label{maxe}
M_\varep = \max_{\partial_p U_T} u^\varep.
 \eeq

Suppose \eqref{maxe} is false. Then $M_\varep>0$ and there exists a point $(x_0,t_0)\in U_T$ such that 
$u^\varep(x_0,t_0)=M_\varep$. At this maximum point $(x_0,t_0)$ we have 
\beq\label{vt1}
u^\varep_t (x_0,t_0)\ge 0\text{ and }
Du^\varep(x_0,t_0)=0.
\eeq

It is proved in \cite[Theorem 3.1]{CHK3}, based mainly on property \eqref{hXh} in Lemma \ref{Xder},  that
\beq\label{old}
D_y X(u,\Phi):(D^2 u + u^2 \Omega^2 \mathbf J^2)\Big|_{(x,t)=(x_0,t_0)}\le 0.
\eeq

The second property of \eqref{vt1} deduces $Du(x_0,t_0)=0$. This fact, \eqref{old} and \eqref{unondiv} imply $u_t(x_0,t_0)\le 0$.
Therefore,
\beqs
u^\varep_t(x_0,t_0)=-\varep u^\varep(x_0,t_0) +e^{-\varep t_0} u_t(x_0,t_0)
\le -\varep M_\varep <0,
\eeqs
which contradicts the first inequality in \eqref{vt1}. Thus, \eqref{maxe} holds true.
 Note that
 \beqs
e^{-\varep T}\max_{\overline{U_T}} u\le M_\varep = \max_{\partial_p U_T} u^\varep\le \max_{\partial_p U_T} u
\le \max_{\overline{U_T}} u.
 \eeqs
Then passing $\varep\to 0$, we obtain \eqref{max-u}.
\end{proof}

In the following,  $T_*\in(0,\infty]$ is fixed.

Clearly, if $u\in C(\overline U\times[0,T_*))\cap C_{x,t}^{2,1}(U\times (0,T_*))$ is a nonnegative solution of problem \eqref{ibvpg},
then, by the virtue of  Theorem \ref{maxpr}, we have the maximum estimates in terms of the initial and boundary data: 
\beq\label{uM} 
\sup_{x\in U}u(x,t)\le \max\left\{\sup_{x\in U} u_0(x),\sup_{(x,\tau)\in \Gamma \times (0,t]} \psi(x,\tau)\right\}\text{ for all $t\in(0,T_*)$.}
\eeq

In case the solution $u$ belongs to $C(\overline U\times(0,T_*))$ but not $C(\overline U\times[0,T_*))$, estimate \eqref{uM} is not applicable.
For instance, initial data $u_0$ is unbounded.
However, under certain weaker conditions, the maximum estimates can be still be established by combining Theorems \ref{thm45} and \ref{maxpr}. 

Under Assumption \ref{asmp44}, let $\alpha_0$ satisfy \eqref{xa2}.
We use the same notation as in Theorem \ref{thm45}.
Assume further that 
\begin{enumerate}[label=\tnum]
    \item\label{m2} $\mathcal M_2(t)$ is finite for all $t\in (0,T_*)$ and $\mathcal E\in L^1_{\rm loc}([0,T_*))$,
    \item\label{Psc} $\Psi\in C(\overline U\times(0,T_*))\cap C([0,T_*),L^{\beta_1}(U))$.
\end{enumerate}

Because of the second property in \ref{m2}, we can find a number $t_0$ such that $0<t_0\le 1$, $t_0<T_*$, and \eqref{Bdef} is satisfied for $T=t_0$, $\alpha=\beta_1$, and some number $B\in(0,1)$.
 
\begin{theorem}\label{maxestsol} 
Let $u\in C(\overline U\times(0,T_*))\cap C_{x,t}^{2,1}(U\times (0,T_*))\cap C([0,T_*),L^{\beta_1}(U))$ be a nonnegative solution of problem \eqref{ibvpg}. 

If $t\in(0,t_0]$, then 
\beq\label{max1}
  \begin{aligned}
 \sup_{x\in U} u(x,t) \le  \bar C_2\chi_*^{\omega_1}t^{-\omega_2}
\mathcal M_2(t)^{\omega_2} {\mathscr B}_*(t)^{2\omega_1}\mathcal V(t)^\frac{2\omega_1+\tilde\nu}{\beta_1}
+ \sup_{x\in U} |\Psi(x,t)|,
\end{aligned}
\eeq
where $\bar C_2= 3^{\omega_2} 2^{\omega_4}\bar C_1$ and 
\beq\label{Bstar}
 {\mathscr B}_*(t)=1+\esssup_{\tau\in(t/4,t)}\|\Psi(\cdot,\tau)\|_{L^\frac{2 r_*}{1-r_*}}.
\eeq

If $t\in(t_0,T_*)$ then
  \beq\label{ubd}
 \sup_{x\in U} u(x,t) \le  \max\left\{\bar C_3\chi_*^{\omega_1}\mathcal M_3(t_0)\left(1+\|\bar u_0\|_{L^{\beta_1}}\right)^{2\omega_1+\tilde\nu}
 + \sup_{x\in U} |\Psi(x,t_0)|,  \sup_{(x,\tau)\in \Gamma \times[t_0,t]} \psi(x,\tau)\right\},
\eeq
where $\bar C_3= \bar C_2 (1-B)^{-\frac{2\omega_1+\tilde\nu}{\beta_1\gamma}}$ and 
$\mathcal M_3(t_0)=t_0^{-\omega_2}\mathcal M_2(t_0)^{\omega_2}{\mathscr B}_*(t_0)^{2\omega_1}$.
\end{theorem}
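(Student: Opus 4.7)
The argument splits naturally into the two regimes. For short time $t\in(0,t_0]$, I will specialize Theorem~\ref{thm45} with $\sigma=1/2$. Because $t_0\le 1$, one has $1+t\le 2$ and $1+\sigma^{-1}t^{-1}\le 3/t$, so the factor $(1+\sigma^{-1}t^{-1})^{\omega_2}(1+t)^{\omega_4}$ in \eqref{Lb1} simplifies to $3^{\omega_2}2^{\omega_4}t^{-\omega_2}$. With $\sigma=1/2$, the quantity $\mathscr B_\sigma(t)$ defined in \eqref{VB} coincides with $\mathscr B_*(t)$ from \eqref{Bstar}. Since $\Psi$ is continuous on $\overline U\times(0,T_*)$ and so is $u$ by hypothesis, $\bar u=u-\Psi\in C(\overline U\times(0,T_*))$, so the essential supremum of $\bar u$ on $U\times(t/2,t)$ dominates $\bar u(\cdot,t)$ pointwise. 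Taking the pointwise decomposition $u=\bar u+\Psi$ and passing to $\sup_{x\in U}$ then delivers \eqref{max1} with $\bar C_2=3^{\omega_2}2^{\omega_4}\bar C_1$.

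For $t\in(t_0,T_*)$, the idea is to invoke the maximum principle on the cylinder $U\times(t_0,t]$, taking $u(\cdot,t_0)$ as the ``initial'' datum. The function $u$ is continuous on $\overline U\times[t_0,t]$ and $C^{2,1}_{x,t}$ on $U\times(t_0,t]$, and equation \eqref{unondiv} is translation covariant in time, so Theorem~\ref{maxpr} applies to the shifted function $u(x,s+t_0)$ on $U\times(0,t-t_0]$ verbatim and yields
\[
\sup_{x\in U}u(x,t)\le \max\Big\{\sup_{x\in U}u(x,t_0),\ \sup_{(x,\tau)\in\Gamma\times[t_0,t]}\psi(x,\tau)\Big\}.
\]
To bound the first entry, I insert \eqref{max1} at $t=t_0$ and estimate $\mathcal V(t_0)$ using the hypothesis \eqref{Bdef}: from the formula in \eqref{VB}, $\mathcal V(t_0)\le V_0(1-B)^{-1/\gamma}$. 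Since $V_0=1+\|\bar u_0\|_{L^{\beta_1}}^{\beta_1}$ and $(1+x^{\beta_1})^{1/\beta_1}\le 1+x$ for $x\ge 0$, we get
\[
\mathcal V(t_0)^{(2\omega_1+\tilde\nu)/\beta_1}\le (1-B)^{-\frac{2\omega_1+\tilde\nu}{\beta_1\gamma}}\bigl(1+\|\bar u_0\|_{L^{\beta_1}}\bigr)^{2\omega_1+\tilde\nu}.
\]
Combining this with \eqref{max1} at $t_0$ and packaging the quantities $t_0^{-\omega_2}\mathcal M_2(t_0)^{\omega_2}\mathscr B_*(t_0)^{2\omega_1}$ into $\mathcal M_3(t_0)$ and the remaining constants into $\bar C_3$ produces \eqref{ubd}.

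There is no serious obstacle here: the estimates are essentially a bookkeeping consequence of Theorems~\ref{thm45} and~\ref{maxpr}. The only mildly delicate point is the reduction of the constants in the short-time bound to the form stated (in particular, confirming $\bar C_2=3^{\omega_2}2^{\omega_4}\bar C_1$ and matching $\mathscr B_\sigma$ with $\mathscr B_*$), and the observation that the maximum principle of Theorem~\ref{maxpr} extends to cylinders based at $t_0>0$ by a time translation, which is automatic from the structure of \eqref{unondiv}.
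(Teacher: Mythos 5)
Your argument is correct and follows the paper's own proof essentially step by step: specialize \eqref{Lb1} with $\sigma=1/2$, use $t\le t_0\le 1$ to absorb the $(1+t)^{\omega_4}(1+2t^{-1})^{\omega_2}$ factor into $3^{\omega_2}2^{\omega_4}t^{-\omega_2}$, then for $t>t_0$ apply the maximum principle on $[t_0,t]$ and bound $\mathcal V(t_0)^{1/\beta_1}\le V_0^{1/\beta_1}(1-B)^{-1/(\beta_1\gamma)}\le (1+\|\bar u_0\|_{L^{\beta_1}})(1-B)^{-1/(\beta_1\gamma)}$ via \eqref{u2} and \eqref{ee3}. There is no meaningful difference in route or ingredients.
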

\begin{proof}
First, note that ${\mathscr B}_*(t)$ in \eqref{Bstar} is, in fact, ${\mathscr B}_{1/2}(t)$ in \eqref{VB}.
Let $t\in(0,t_0]$.  By estimate \eqref{Lb1} applied to  $\sigma=1/2$, we have 
\begin{align*}
\|\bar u\|_{L^{\infty}(U\times(t/2,t))}
&\le \bar C_1\chi_*^{\omega_1}
(1+2t^{-1})^{\omega_2}(1+t)^{\omega_4} 
\mathcal M_2(t)^{\omega_2} {\mathscr B}_*(t)^{2\omega_1}\mathcal V(t)^\frac{2\omega_1+\tilde\nu}{\beta_1}\\
&\le \bar C_1\chi_*^{\omega_1}
(3t^{-1})^{\omega_2}2^{\omega_4} 
\mathcal M_2(t)^{\omega_2} {\mathscr B}_*(t)^{2\omega_1}\mathcal V(t)^\frac{2\omega_1+\tilde\nu}{\beta_1}\\
&=\bar C_2\chi_*^{\omega_1} t^{-\omega_2} 
\mathcal M_2(t)^{\omega_2} {\mathscr B}_*(t)^{2\omega_1}\mathcal V(t)^\frac{2\omega_1+\tilde\nu}{\beta_1}.
\end{align*}

By the continuity of $\bar u$ on $\overline U\times [t/2,t]$,
\begin{align*}
 \sup_{x\in U} |\bar u(x,t)| 
 \le \|\bar u\|_{L^{\infty}(U\times(t/2,t))}
\le \bar C_2\chi_*^{\omega_1} t^{-\omega_2} 
\mathcal M_2(t)^{\omega_2} {\mathscr B}_*(t)^{2\omega_1}\mathcal V(t)^\frac{2\omega_1+\tilde\nu}{\beta_1}.
\end{align*}

Combining this estimate with the triangle inequality $u(x,t)\le |\bar u(x,t)|+|\Psi(x,t)|$ gives \eqref{max1}.

Let $t\in(t_0,T_*)$ now.  Applying the maximum principle in Theorem~\ref{maxpr} for the interval $[t_0,t]$ in place of $[0,T]$, we have  
\beq\label{umax}
 \sup_{x\in U} u(x,t) \le \max\left\{ \sup_{x\in U} u(x,t_0),  \sup_{(x,\tau)\in \Gamma\times [t_0,t]} \psi(x,\tau)\right\}.
 \eeq

Estimating $u(x,t_0) $ by using \eqref{max1} for $t=t_0$ yields
 \beq\label{ut0}
 \sup_{x\in U} u(x,t_0) \le \bar C_2\chi_*^{\omega_1} t_0^{-\omega_2} 
\mathcal M_2(t_0)^{\omega_2} {\mathscr B}_*(t_0)^{2\omega_1}\mathcal V(t_0)^\frac{2\omega_1+\tilde\nu}{\beta_1}
+ \sup_{x\in U} |\Psi(x,t_0)|.
 \eeq

Thanks to estimate \eqref{u2} for $t=t_0$,  we have 
\beq\label{Vt0}
\mathcal V(t_0)^{1/\beta_1}\le V_0^{1/\beta_1} (1-B)^{-1/(\beta_1\gamma)}.
\eeq

Applying inequality \eqref{ee3} to $x=1$, $y=\int_U |u_0(x)|^{\beta_1}dx$ and  $p=1/\beta_1<1$ gives
\beq\label{Vzuz} V_0^{1/\beta_1}\le 1+\|u_0\|_{L^{\beta_1}}.
\eeq 

Then estimate \eqref{ubd} follows from \eqref{umax}, \eqref{ut0}, \eqref{Vt0} and \eqref{Vzuz}.
\end{proof}

\begin{remark}
The following final remarks are in order.
\begin{enumerate}[label=\rnum]
    \item As a sequel of our previous work \cite{CHK3}, the current paper only considers slightly compressible fluids. 
Nonetheless, the methods developed here and in \cite{CHK1,CHK2} can be applied to analyze other types of (compressible) gaseous flows in rotating porous media.
   \item Our analysis can be easily adapted for more general PDE of type \eqref{mainuX} in space $\R^n$ not just $\R^3$. The function $X$ is only required to have similar properties to those in Lemmas \ref{lem21} and \ref{Xder}.
\end{enumerate}
\end{remark}


\begin{thebibliography}{10}

\bibitem{AhmedHandbook2nd}
Tarek Ahmed.
\newblock {\em Reservoir engineering handbook}.
\newblock Gulf Professional Publishing, 2nd edition, 2001.

\bibitem{ABHI1}
Eugenio Aulisa, Lidia Bloshanskaya, Luan Hoang, and Akif Ibragimov.
\newblock {Analysis of generalized {F}orchheimer flows of compressible fluids
  in porous media}.
\newblock {\em J. Math. Phys.}, 50(10):103102, 44 pp, 2009.

\bibitem{BearBook}
Jacob Bear.
\newblock {\em {Dynamics of Fluids in Porous Media}}.
\newblock Dover Publications, 1988.
\newblock Reprint of the American Elsevier Publishing Company, Inc., New York,
  1972 edition.

\bibitem{Zabensky2015a}
Miroslav Bul\'{\i}{\v c}ek, Josef M{\'a}lek, and Josef {\v Z}abensk{\'y}.
\newblock A generalization of the {D}arcy-{F}orchheimer equation involving an
  implicit, pressure-dependent relation between the drag force and the
  velocity.
\newblock {\em J. Math. Anal. Appl.}, 424(1):785--801, 2015.

\bibitem{CKU2006}
A.~O. {\c C}elebi, V.~K. Kalantarov, and D.~U{\u g}urlu.
\newblock {On continuous dependence on coefficients of the
  {B}rinkman-{F}orchheimer equations}.
\newblock {\em Appl. Math. Lett.}, 19(8):801--807, 2006.

\bibitem{CH1}
Emine Celik and Luan Hoang.
\newblock Generalized {F}orchheimer flows in heterogeneous porous media.
\newblock {\em Nonlinearity}, 29(3):1124--1155, 2016.

\bibitem{CH2}
Emine Celik and Luan Hoang.
\newblock Maximum estimates for generalized {F}orchheimer flows in
  heterogeneous porous media.
\newblock {\em J. Differential Equations}, 262(3):2158--2195, 2017.

\bibitem{CHK2}
Emine Celik, Luan Hoang, and Thinh Kieu.
\newblock Doubly nonlinear parabolic equations for a general class of
  {F}orchheimer gas flows in porous media.
\newblock {\em Nonlinearity}, 31(8):3617--3650, 2018.

\bibitem{CHK1}
Emine Celik, Luan Hoang, and Thinh Kieu.
\newblock Generalized {F}orchheimer flows of isentropic gases.
\newblock {\em J. Math. Fluid Mech.}, 20(1):83--115, 2018.

\bibitem{CHK3}
Emine Celik, Luan Hoang, and Thinh Kieu.
\newblock Slightly compressible {F}orchheimer flows in rotating porous media.
\newblock {\em J. Math. Phys.}, pages 1--39, 2021.
\newblock in press. doi:10.1063/5.0047754. Preprint
  https://arxiv.org/abs/1904.08636.

\bibitem{ChadamQin}
J.~Chadam and Y.~Qin.
\newblock {Spatial decay estimates for flow in a porous medium}.
\newblock {\em SIAM J. Math. Anal.}, 28(4):808--830, 1997.

\bibitem{Dakebook}
L.~P. Dake.
\newblock {\em Fundamentals of reservoir engineering}, volume~8 of {\em
  Developments in Petroleum Science}.
\newblock Elsevier Science B.V., 1978.

\bibitem{KR2017}
Karol~W. Hajduk and James~C. Robinson.
\newblock Energy equality for the 3{D} critical convective
  {B}rinkman-{F}orchheimer equations.
\newblock {\em J. Differential Equations}, 263(11):7141--7161, 2017.

\bibitem{HIKS1}
L.~Hoang, A.~Ibragimov, T.~Kieu, and Z.~Sobol.
\newblock Stability of solutions to generalized {F}orchheimer equations of any
  degree.
\newblock {\em J. Math. Sci. (N.Y.)}, 210(4, Problems in mathematical analysis.
  No. 81 (Russian)):476--544, 2015.

\bibitem{HI2}
Luan Hoang and Akif Ibragimov.
\newblock Qualitative study of generalized {F}orchheimer flows with the flux
  boundary condition.
\newblock {\em Adv. Differential Equations}, 17(5--6):511--556, 2012.

\bibitem{HK2}
Luan Hoang and Thinh Kieu.
\newblock Global estimates for generalized {F}orchheimer flows of slightly
  compressible fluids.
\newblock {\em J. Anal. Math.}, 137(1):1--55, 2019.

\bibitem{HKP1}
Luan~T. Hoang, Thinh~T. Kieu, and Tuoc~V. Phan.
\newblock Properties of generalized {F}orchheimer flows in porous media.
\newblock {\em J. Math. Sci.}, 202(2):259--332, 2014.

\bibitem{LadyParaBook68}
O.~A. Lady{\v z}enskaja, V.~A. Solonnikov, and N.~N. Ural{\cprime}ceva.
\newblock {\em {Linear and quasilinear equations of parabolic type}}.
\newblock {Translated from the Russian by S. Smith. Translations of
  Mathematical Monographs, Vol. 23}. American Mathematical Society, Providence,
  R.I., 1968.

\bibitem{MTT2016}
Peter~A. Markowich, Edriss~S. Titi, and Saber Trabelsi.
\newblock Continuous data assimilation for the three-dimensional
  {B}rinkman-{F}orchheimer-extended {D}arcy model.
\newblock {\em Nonlinearity}, 29(4):1292--1328, 2016.

\bibitem{MuskatBook}
Morris Muskat.
\newblock {\em {The flow of homogeneous fluids through porous media}}.
\newblock McGraw-Hill Book Company, inc., 1937.

\bibitem{NieldBook}
Donald~A. Nield and Adrian Bejan.
\newblock {\em {Convection in porous media}}.
\newblock Springer-Verlag, New York, fourth edition, 2013.

\bibitem{Payne1999a}
L.~E. Payne, J.~C. Song, and B.~Straughan.
\newblock {Continuous dependence and convergence results for {B}rinkman and
  {F}orchheimer models with variable viscosity}.
\newblock {\em R. Soc. Lond. Proc. Ser. A Math. Phys. Eng. Sci.},
  455(1986):2173--2190, 1999.

\bibitem{Payne1999b}
L.~E. Payne and B.~Straughan.
\newblock {Convergence and continuous dependence for the
  {B}rinkman-{F}orchheimer equations}.
\newblock {\em Stud. Appl. Math.}, 102(4):419--439, 1999.

\bibitem{StraughanBook}
Brian Straughan.
\newblock {\em {Stability and wave motion in porous media}}, volume 165 of {\em
  {Applied Mathematical Sciences}}.
\newblock Springer, New York, 2008.

\bibitem{VadaszBook}
Peter Vadasz.
\newblock {\em Fluid flow and heat transfer in rotating porous media}.
\newblock Springer, 2016.

\bibitem{Ward64}
J.~C. Ward.
\newblock {Turbulent flow in porous media.}
\newblock {\em Journal of the Hydraulics Division, Proc. Am. Soc. Civ. Eng.},
  90(HY5):1--12, 1964.

\end{thebibliography}
\def\cprime{$'$}

\end{document}